\DeclareMathOperator{\arcsinh}{arcsinh}
\DeclareMathOperator\arctanh{arctanh}
\begin{document}
\title{Ideal polyhedral surfaces in Fuchsian manifolds}
\author{Roman Prosanov \thanks{Supported by SNF grant $200021_-169391$ ``Discrete curvature and rigidity''.}}
\date{}
\AtEndDocument{\bigskip{\footnotesize
\par
  \textsc{Universit\'{e} de Fribourg, Chemin du Mus\'{e}e 23, CH-1700 Fribourg, Switzerland} \par
  \textsc{Moscow Institute Of Physics And Technology, Institutskiy per. 9, 141700, Dolgoprudny, Russia} \par
  \textit{E-mail}: \texttt{rprosanov@mail.ru}
}}
\maketitle

\newtheorem{dfn}{Definition}[section]
\newtheorem{thm}[dfn]{Theorem}
\newtheorem{lm}[dfn]{Lemma}
\newtheorem{crl}[dfn]{Corollary}
\newtheorem{cnj}[dfn]{Conjecture}
\newtheorem{rmk}[dfn]{Remark}
\newtheorem{prop}[dfn]{Proposition}
\renewcommand{\proofname}{Proof}

\renewcommand{\refname}{Bibliography}
\renewcommand{\proofname}{Proof}
\renewcommand{\figurename}{Figure}
\renewcommand{\le}{\leqslant}
\renewcommand{\leq}{\leqslant}
\newcommand{\len}{{\rm len}}
\renewcommand{\ge}{\geqslant}
\renewcommand{\geq}{\geqslant}
\renewcommand{\mathds}{\mathbb}
\newcommand{\e}{\varepsilon}
\newcommand{\R}{\mathbb{R}}
\renewcommand{\H}{\mathbb{H}}
\renewcommand{\L}{\mathbb{L}}
\renewcommand{\S}{\mathbb{S}}
\newcommand{\ds}{{\rm d} \mathbb{S}}
\newcommand{\dist}{{\rm dist}}
\newcommand{\vol}{\textrm{vol}}
\renewcommand{\dist}{{\rm dist}}

\begin{abstract}
Let $S_{g,n}$ be a surface of genus $g > 1$ with $n>0$ punctures equipped with a complete hyperbolic cusp metric. Then it can be uniquely realized as the boundary metric of an ideal Fuchsian polyhedron. In the present paper we give a new variational proof of this result. %Our proof is based on the discrete Hilbert-Einstein functional and on an interpretation of the Epstein--Penner convex hull construction. 
We also give an alternative proof of the existence and uniqueness of a hyperbolic polyhedral metric with prescribed curvature in a given conformal class.
\end{abstract}

\section{Introduction}

\subsection{Theorems of Alexandrov and Rivin}

Consider a convex polytope $P \subset \R^3$. Its boundary is homeomorphic to $S^2$ and carries the intrinsic metric induced from the Euclidean metric on $\R^3$. What are the properties of this metric?

A metric on $S^2$ is called \emph{polyhedral Euclidean} if it is locally isometric to the Euclidean metric on $\R^2$ except finitely many points, which have neighborhoods isometric to an open subset of a cone (an exceptional point is mapped to the apex of this cone). If the conical angle of every exceptional point is less than $2\pi$, then this metric is called \emph{convex}. It is clear that the induced metric on the boundary of a convex polytope is convex polyhedral Euclidean. One can ask a natural question: is this description complete, in the sense that every convex polyhedral Euclidean metric can be realized as the induced metric of a polytope? This question was answered positively by Alexandrov in 1942, see~\cite{Alex}, \cite{AlexB}:

\begin{thm}
\label{alex}
For every convex polyhedral Euclidean metric $d$ on $S^2$ there is a convex polytope $P \subset \R^3$ such that $(S^2, d)$ is isometric to the boundary of $P$. Moreover, such $P$ is unique up to an isometry of $\R^3$.
\end{thm}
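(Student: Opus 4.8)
The natural strategy is Alexandrov's own method of continuity (invariance of domain), which disposes of existence and uniqueness in one package. Fix the number of cone points $N \geq 4$, let $\mathcal{M}_N$ be the space of convex polyhedral Euclidean metrics on $S^2$ with exactly $N$ cone points, and let $\mathcal{P}_N$ be the space of convex polytopes in $\R^3$ with exactly $N$ vertices, taken modulo orientation-preserving isometries of $\R^3$. Since every vertex of a convex polytope carries positive curvature and every flat face contributes no curvature, the cone points of the induced metric are precisely the vertices, so sending a polytope to the intrinsic metric of its boundary gives a well-defined map $\Phi \colon \mathcal{P}_N \to \mathcal{M}_N$. The theorem is exactly the assertion that each $\Phi$ is a bijection: surjectivity is existence and injectivity is uniqueness.

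First I would check that both spaces are manifolds of the same dimension. A polytope with $N$ labelled vertices is given by $3N$ coordinates, and quotienting by the $6$-dimensional isometry group leaves $\dim \mathcal{P}_N = 3N-6$; demanding that all $N$ points be genuine vertices of the convex hull is an open condition, so $\mathcal{P}_N$ is an open manifold of this dimension. On the metric side, triangulating with the cone points as vertices and using edge lengths as local coordinates, Euler's formula yields $3N-6$ edges and a routine computation gives $\dim \mathcal{M}_N = 3N-6$ as well. The map $\Phi$ is plainly continuous in these coordinates.

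Next I would promote $\Phi$ to a local homeomorphism and then to a global one. The key local input is \emph{infinitesimal rigidity} of convex polytopes: a nontrivial infinitesimal isometric deformation of the boundary metric cannot be induced by an infinitesimal motion of the polytope, i.e. $d\Phi$ is injective, hence an isomorphism between spaces of equal dimension, so by the inverse function theorem (or directly by Brouwer's invariance of domain) $\Phi$ is a local homeomorphism. To globalize I would establish that $\Phi$ is proper: if the boundary metrics converge in $\mathcal{M}_N$, then cone angles stay bounded away from $2\pi$ and edge lengths stay bounded, so no vertex escapes to infinity and no two collide, and the polytopes subconverge within $\mathcal{P}_N$ (allowing doubly covered polygons as degenerate limits when necessary). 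A proper local homeomorphism is a covering onto its image, so the image is open and closed; since $\mathcal{M}_N$ is connected, $\Phi$ covers all of $\mathcal{M}_N$, which already gives existence. Uniqueness is the global rigidity statement, Alexandrov's extension of Cauchy's theorem: isometric convex polytopes are congruent, so $\Phi$ is injective and the covering is single-sheeted.

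The main obstacle is the rigidity. Both the infinitesimal version (needed for the local homeomorphism) and the global version (needed for uniqueness) are the genuinely hard inputs, and the global statement in particular rests on the delicate Cauchy--Alexandrov sign-counting argument, combining the Cauchy arm lemma with Euler's formula. It is precisely this combinatorial rigidity step that a variational approach seeks to circumvent, replacing it by the strict convexity (hence unique critical point) of a suitable functional on the space of candidate realizations.
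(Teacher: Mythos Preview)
The paper does not give its own proof of Theorem~\ref{alex}; the result is quoted as classical background (Alexandrov, 1942), with pointers to Alexandrov's original argument, to Volkov's proof, and to the Bobenko--Izmestiev variational proof. What you have sketched is precisely the outline of Alexandrov's original continuity method, which the paper summarizes in one sentence as being ``based on some topological properties of the map from the space of convex polytopes to the space of convex polyhedral Euclidean metrics.'' So your proposal and the (referenced) proof agree in spirit.

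Your sketch is essentially sound, with the usual soft spots. First, properness fails as literally stated: a sequence in $\mathcal{P}_N$ can degenerate to a doubly covered polygon while the boundary metric converges in $\mathcal{M}_N$, so one must enlarge $\mathcal{P}_N$ to include doubly covered polygons from the outset (the paper notes this degeneration explicitly just after the statement). Second, that $\mathcal{M}_N$ is a $(3N-6)$-manifold requires a little more than a single triangulation and its edge lengths, since the triangulation may need to change across the space; one must check that different triangulations give compatible charts. Your closing remark is exactly right: the hard input is Cauchy--Alexandrov rigidity, and the variational alternative (the philosophy of the present paper, applied there to the Fuchsian analogue rather than to Theorem~\ref{alex} itself) replaces that combinatorial rigidity step by strict concavity of a functional and analysis of its behaviour at infinity.
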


Note that $P$ can degenerate to a polygon. In this case $P$ is doubly covered by the sphere. 

The uniqueness part follows from the modified version of Cauchy's global rigidity of convex polytopes. The original proof by Alexandrov of the existence part is not constructive. It is based on some topological properties of the map from the space of convex polytopes to the space of convex polyhedral Euclidean metrics. Another proof was done by Volkov in~\cite{Vo}, a student of Alexandrov, by considering a discrete version of the total scalar curvature.

A new proof of Theorem \ref{alex} was proposed by Bobenko and Izmestiev in \cite{BI}. For a fixed metric they considered a space of polytopes with conical singularities in the interior realizing this metric at the boundary. In order to remove singularities they constructed a functional over this space and investigated its behavior. Such a proof can be turned into a practical algorithm of finding a polytopal realization of a given metric. It was implemented by Stefan Sechelmann. One should note that this algorithm is approximate as it uses numerical methods of solving variational problems, but it works sufficiently well for practical needs.

We turn our attention to hyperbolic metrics on surfaces. By $S_{g,n}$ we mean the surface $S_g$ of genus $g$ with $n$ marked points. Let $d$ be a complete hyperbolic metric of a finite volume with $n$ cusps at the marked points (in what follows we will call it a \emph{cusp metric}). In \cite{Ri} Rivin proved a version of Theorem \ref{alex} for cusp metrics on the 2-sphere:

\begin{thm}
\label{rivin}
For every cusp metric $d$ on $S_{0, n}$ there exists a convex ideal polyhedron $P \subset \H^3$ such that $(S_{0,n}, d)$ is isometric to the boundary of $P$. Moreover, such $P$ is unique up to an isometry of $\H^3$.
\end{thm}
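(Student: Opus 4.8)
The plan is to give a variational proof in the spirit of Bobenko and Izmestiev, adapting their Euclidean cone construction to the hyperbolic-ideal setting. Since the induced metric on an ideal polyhedron $P \subset \mathbb{H}^3$ is glued from ideal hyperbolic triangles, I would first fix combinatorial data by decomposing the given cusp surface $(S_{0,n}, d)$ into ideal triangles, for instance via its ideal Delaunay (Penner decorated) triangulation. After choosing a horocycle at each cusp, every edge acquires a Penner $\lambda$-length, and the isometry type of $(S_{0,n},d)$ together with the decoration is encoded by these finitely many positive numbers. The realization problem then reads: find positions of $n$ ideal points $v_1, \dots, v_n \in \partial_\infty \mathbb{H}^3$ whose convex hull is an ideal polyhedron inducing exactly this triangulated metric, and prove that such a configuration exists and is unique up to isometry.

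Next I would introduce the space of \emph{generalized} ideal polyhedra over which to vary. Fix an interior apex $O \in \mathbb{H}^3$ and assign to each ideal vertex $v_i$ a horospherical height $h_i$ measuring $O$ relative to a horosphere centred at $v_i$. Coning from $O$ over each boundary ideal triangle produces a family of generalized hyperbolic tetrahedra, each with one ideal vertex, and gluing them yields a metric on the solid that restricts to the prescribed triangulated cusp metric on $\partial P$ but in general carries cone-type singularities along the apex edges $[O, v_i]$. The heights $\mathbf{h} = (h_1,\dots,h_n)$ are the free variables; a genuine nonsingular ideal polyhedron realizing $d$ corresponds precisely to a choice of $\mathbf{h}$ for which the total dihedral angle around each edge $[O,v_i]$ equals $2\pi$, i.e. all the singular curvatures $\kappa_i$ vanish.

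I would then build the variational functional. Using the hyperbolic Schl\"afli formula for the assembled generalized tetrahedra, the curvatures $\kappa_i$ appear as the partial derivatives of a single explicit function $\mathcal{H}(\mathbf{h})$ (a discrete total-mean-curvature / covolume-type functional, or its Legendre dual), so its critical points are exactly the honest realizations. The two facts to establish are: (i) $\mathcal{H}$ is strictly concave, after a suitable sign choice, on the admissible domain of convex configurations; and (ii) a critical point exists in the interior, which requires controlling the behaviour of $\mathcal{H}$ as the heights degenerate -- vertices colliding on $\partial_\infty \mathbb{H}^3$, faces flattening, or the apex escaping to infinity. Concavity yields uniqueness of the critical point and hence of the realization for the fixed triangulation, while the existence of an interior maximizer, together with nondegeneracy of the Hessian, upgrades the construction to a continuity (degree / implicit-function) argument identifying the map ``metric $\mapsto$ polyhedron'' as a homeomorphism; one also checks that the outcome is independent of the auxiliary Delaunay triangulation.

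The hard part will be proving strict concavity of $\mathcal{H}$, equivalently the infinitesimal rigidity of the cone construction: this reduces to showing that the Hessian -- a weighted discrete Laplacian on the triangulation, whose weights come from derivatives of the ideal-tetrahedron geometry -- is negative definite. I expect the delicate points to be verifying that the off-diagonal weights carry the correct sign and that a diagonal-dominance or Perron--Frobenius-type argument applies, all while the edge lengths are infinite so that the usual computations must be replaced by their horocyclic and $\lambda$-length analogues. A secondary obstacle is the properness control at the boundary of the space of heights, where the ideal vertices may tend to coincide; here the finite-volume cusp condition on $d$ should supply the a priori bounds that rule out escape and force the maximizer to lie in the interior.
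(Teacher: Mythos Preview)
The paper does not prove Theorem~\ref{rivin}. It is stated as a known result due to Rivin~\cite{Ri}, with a recent variational proof by Springborn~\cite{Spr} mentioned immediately afterward; the paper's own contribution is the genus $g>1$ analogue, Theorem~\ref{fil}. So there is no ``paper's own proof'' to compare your proposal against.

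That said, your sketch is a reasonable outline of a Bobenko--Izmestiev-style argument transplanted to the ideal hyperbolic setting, and it is worth noting how it differs from what the paper actually does in the Fuchsian case. You cone from an interior apex $O\in\H^3$ and vary horospherical heights $h_i$, obtaining a cone-manifold built from semi-ideal tetrahedra with singularities along the radii $[O,v_i]$. The paper, by contrast, projects orthogonally to a totally geodesic plane (the convex core of the Fuchsian manifold) and varies the lateral edge lengths $r_i$, producing a complex of semi-ideal \emph{prisms} with singularities along the edges $A_iB_i$. The prism picture is the natural one for $g>1$ because the ``centre'' of a Fuchsian manifold is a surface, not a point; your apex picture is the natural one for $g=0$. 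Both lead to a discrete Hilbert--Einstein functional whose gradient is the vector of curvatures, and both require a diagonal-dominance computation for the Hessian and a boundary analysis to trap the maximum. The paper's boundary analysis (Lemmas~\ref{inf1} and~\ref{D} and Corollary~\ref{inf2}) exploits the hyperbolic area of the lower boundary via Gauss--Bonnet; in the genus-zero cone setting you would need a different mechanism, since there is no lower boundary surface, and this is where Springborn's argument and yours would have to supply something genuinely new relative to the present paper.
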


Rivin gave a proof in the spirit of Alexandrov's original proof. Very recently, in~\cite{Spr} Springborn gave a variational proof of Theorem \ref{rivin}.

\subsection{Ideal Fuchsian polyhedra and Alexandrov-type results}

It is of interest to generalize these results to surfaces of higher genus. We restrict ourselves to the case $g>1$ and to cusp metrics. Define ${G:=\pi_1(S_g)}$. Let $\rho: G \rightarrow {\rm Iso^+}(\H^3)$ be a \emph{Fuchsian representation}: an injective homomorphism such that its image is discrete and there is a geodesic plane invariant under $\rho (G)$. Then $F := \H^3 / \rho(G)$ is a complete hyperbolic manifold homeomorphic to $S_g \times \R$. The image of the invariant plane is the so-called convex core of $F$ and is homeomorphic to $S_g$. The manifold $F$ is symmetric with respect to its convex core. The boundary at infinity of $F$ consists of two connected components.

A subset of $F$ is called \emph{convex} if it contains every geodesic between any two its points. It is possible to consider convex hulls with respect to this definition. An \emph{ideal Fuchsian polyhedron} $P$ is the closure of the convex hull of a finite point set in a connected component of $\partial_{\infty}F$. It has two boundary components: one is the convex core and the second is isometric to $(S_{g,n}, d)$ for a cusp metric $d$. We will always refer to the first component as to the \emph{lower boundary} of $P$ and to the second as to the \emph{upper boundary}. The following result can be considered as a generalization of the Alexandrov theorem to surfaces of higher genus with cusp metrics:

\begin{thm}
\label{fil}
For every cusp metric $d$ on $S_{g, n}$, $g>1, n>0$, there exists a Fuchsian manifold $F$ and an ideal Fuchsian polyhedron $P\subset F$ such that $(S_{g,n}, d)$ is isometric to the upper boundary of $P$. Moreover, $F$ and $P$ are unique up to isometry.
\end{thm}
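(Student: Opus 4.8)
The plan is to follow the variational strategy pioneered by Bobenko–Izmestiev for Theorem~\ref{alex} and adapted by Springborn for Theorem~\ref{rivin}, working equivariantly in the Fuchsian setting. The idea is to decouple existence and uniqueness, and to treat existence by constructing a concave functional whose critical point yields the desired polyhedron. First I would fix the combinatorics: given the cusp metric $d$ on $S_{g,n}$, I would choose a geodesic triangulation $\mathcal{T}$ of $(S_{g,n},d)$ with the $n$ punctures as its only vertices (an ideal triangulation), so that each face is an ideal hyperbolic triangle and the data of the metric is encoded in finitely many combinatorial/edge parameters. The natural parameters here, following the ideal-polyhedron philosophy, are the \emph{horocyclic} or \emph{Penner-type} lengths at the cusps, or equivalently the shears/angles; these behave well under the variational machinery because ideal triangles are rigid and the metric degrees of freedom live entirely on the edges.

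Next I would set up the space of ``polyhedra with possibly singular cone apexes,'' i.e. Fuchsian configurations over $\H^3$ that realize the given upper boundary metric combinatorially but are allowed to have interior conical singularities along the axes descending from the ideal vertices. Concretely, I would lift everything to the universal cover, work $\rho(G)$-equivariantly for a Fuchsian representation $\rho$, and parametrize such configurations by the ``heights'' or truncation data $h_i$ assigned to the ideal vertices (the analogue of the radial variables $r_i$ in Bobenko–Izmestiev). For each admissible choice of heights there is an associated polyhedral realization with cone angles $\theta_i$ around the vertical axes; the target is to find heights making every $\theta_i$ equal to $2\pi$, which is exactly the condition that the singularities vanish and the configuration is a genuine convex ideal Fuchsian polyhedron. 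I would then introduce a functional $\mathcal{E}$ on the space of admissible height vectors—a discrete total-scalar-curvature / covolume-type functional, built so that its gradient is $(2\pi - \theta_i)_i$ (up to sign), so that critical points correspond precisely to nonsingular polyhedra. The key structural input is that the Hessian of $\mathcal{E}$, expressed through the derivatives $\partial \theta_i / \partial h_j$, is definite; this concavity is the engine driving both existence and local uniqueness.

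The existence proof then reduces to: (i) showing the admissible domain of heights is nonempty and that $\mathcal{E}$ is concave on it, (ii) controlling the behavior of $\mathcal{E}$ on the boundary of the domain so that an interior maximum exists. For (ii) I would verify that as the height data degenerate—vertices escaping to infinity, edges collapsing, or cone angles running to $0$—the functional tends to $-\infty$ (or the gradient points strictly inward), by analyzing the asymptotics of the cone angles; here the cusp (finite-volume, complete) hypothesis is what keeps the horocyclic data finite and makes these estimates tractable. The maximum of a concave function that is properly coercive toward the boundary is attained in the interior and is unique, which simultaneously yields an existence of a nonsingular realization and its uniqueness \emph{for the fixed triangulation and fixed representation $\rho$}. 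To upgrade to full uniqueness up to isometry as stated—uniqueness of both $F$ and $P$—I would invoke (or reprove in this equivariant setting) a Cauchy–Alexandrov global rigidity argument for convex ideal Fuchsian polyhedra, showing that two such polyhedra with isometric upper boundaries must be congruent; this also frees the conclusion from the auxiliary choice of triangulation, since any two ideal triangulations are related by flips under which the realization is unchanged.

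The hard part, as in all arguments of this type, will be step (ii): establishing the correct degeneration/coercivity estimates for the curvature functional near the boundary of the height domain, and in particular ruling out escape of the configuration to a degenerate or infinite-covolume limit. The genus $g>1$ equivariance adds a genuine difficulty over Rivin's sphere case, because the Fuchsian representation $\rho$ (equivalently the lower-boundary conformal/hyperbolic structure) is \emph{not} fixed in advance—it is part of the unknown—so the functional must be considered over a product space that also varies the underlying Fuchsian structure, and the concavity and properness must hold jointly. Managing this coupling, and proving that the combined Hessian remains definite while the boundary behavior stays coercive in all the extra directions, is where I expect the main technical weight of the proof to lie.
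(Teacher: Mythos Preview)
Your broad outline---heights at the ideal vertices as parameters, a discrete Hilbert--Einstein functional whose gradient is the curvature defect, concavity plus coercivity to locate a critical point---matches the paper. But the last paragraph reveals a conceptual misstep that would make your version substantially harder than necessary, and it is exactly the point where the paper's setup is cleverer than you anticipate.

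You frame the construction as ``work $\rho(G)$-equivariantly for a Fuchsian representation $\rho$'' and then worry that $\rho$ is part of the unknown, forcing you onto a product space (heights) $\times$ (Teichm\"uller space of the lower boundary) with a coupled Hessian. The paper avoids this entirely by building the candidate object from \emph{local pieces}, not from a global equivariant picture. Each ideal triangle $A_iA_jA_h$ of an Epstein--Penner triangulation of $(S_{g,n},d)$, together with the three heights $r_i,r_j,r_h$, determines a unique semi-ideal \emph{prism} in $\H^3$: upper face the ideal triangle, lateral edges of the prescribed lengths, lower face the orthogonal projection to a plane. These prisms glue along lateral faces into a ``prismatic complex'' $K({\bf r})$. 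Its lower boundary is automatically a hyperbolic surface with cone points at the $B_i$; no Fuchsian representation is chosen in advance, and none is needed until the very end, when all cone angles equal $2\pi$ and the lower boundary becomes smooth. Thus the configuration space is exactly $\R^n$, the Hessian is $n\times n$ and shown directly to be diagonally dominant, and the coupling you fear simply does not occur: the lower-boundary structure is a \emph{function} of ${\bf r}$, not an independent variable.

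Two smaller points. First, the triangulation is not fixed once and for all: the paper shows that for each ${\bf r}$ the convex complex is realized by any Epstein--Penner triangulation for that decoration, that all such choices give marked-isometric complexes (Lemma~\ref{inj}), and that the functional is well-defined on all of $\R^n$ across cell boundaries where the combinatorics changes. Second, you do not need a separate Cauchy--Alexandrov rigidity step for uniqueness: strict concavity already gives at most one critical point in $\R^n$, and the injectivity lemma translates this into uniqueness of the polyhedron.
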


This theorem was first proved by Schlenker in his unpublished manuscript \cite{Sch1}. Another proof was given by Fillastre in \cite{Fil1}. Both these proofs were non-constructive following the original approach of Alexandrov. One of the purposes of the present paper is to give a variational proof of Theorem \ref{fil} by turning it to a finite dimensional convex optimization problem in the spirit of papers \cite{BI}, \cite{FI} and \cite{Spr}. In contrast with the previous proofs, it can be transformed to a numerical algorithm of finding the realization of a given cusp metric as a Fuchsian polyhedron. 

Several authors studied Alexandrov-type questions for hyperbolic surfaces of genus $g>1$ in more general sense. They are collected in the following result of Fillastre \cite{Fil1}. Let $S_{g,n,m}$ be $S_g$ with $n$ marked points and $m$ disjoint closed discs removed. Consider a complete hyperbolic metric $d$ on $S_{g, n, m}$ with cusps and conical singularities at marked points and complete ends of infinite area at removed disks, i.e. boundary components at infinity. One can see an example in the projective model as the intersection of $\overline \H^3$ with a cone having the apex outside of $\overline \H^3$ (\emph{hyperideal point}). Then $(S_{g,n,m},d)$ can be uniquely realized as the induced metric at the upper boundary of a generalized Fuchsian polyhedron (with ideal, hyperideal and conical vertices). It would be interesting to extend the variational technique to this generalization. %, but we should consider more different cases in the geometric part of our proof. Also, with the presence of conical points our functional will not be concave. 
This requires a substantial additional work.

The case of general metrics and $g=0$ was proved by Schlenker in \cite{Sch2}. %Some vertices may be in the interior of a Fuchsian manifold and correspond to conic points. 
The case of $g>1$ with only conical singularities was first proved in an earlier paper of Fillastre \cite{Fil2} and with only cusps and infinite ends in the paper \cite{Sch3} by Schlenker. The torus case with only conical singularities was the subject of the paper \cite{FI} by Fillastre and Izmestiev. The last paper also followed the scheme of variational proof. All other mentioned works were done in the framework of the original Alexandrov approach. Recently another realization result of metrics on surfaces with conical singularities in a Lorentzian space was obtained by Brunswic in~\cite{Bru}. Although, he worked in a different setting, his methods were close to ours: he also used the discrete Hilbert--Einstein functional and Epstein--Penner decompositions.

%There is an interesting interpretation of Theorem \ref{fil} in terms of the Teichm\"uller space. Let $\tilde{\mathcal T}_{g,n}$ be the \emph{cusped Teichm\"uller space}, i.e. the space of all cusp metrics on $S_{g,n}$ up to isometry isotopic to the identity. Also, let $\mathcal T_{g,n}$ be the {Teichm\"uller space with $n$ marked points}, i.e. the space of all complete hyperbolic metrics on $S_{g}$ with $n$ marked points up to isometry isotopic to the identity.

%\begin{thm}
%There is a geometric interpretation of a bijection between $\tilde{\mathcal T}_{g,n,}$ and $\mathcal T_{g,n}.$
%\end{thm}

\subsection{Discrete conformality}

There is a connection between convex realizations of polyhedral metrics and discrete uniformization problems (see~\cite{BPS} for a detailed exposition). %We briefly give some necessary definitions.

Denote the set of marked points of $S_{g,n}$ by $\mathcal B$. Similarly to Euclidean case, we say that a metric $d$ on $S_{g,n}$ is \emph{hyperbolic polyhedral} if it is locally hyperbolic except points of $\mathcal B$ where $d$ can be locally isometric to a hyperbolic cone. Thus, the set of conical singularities is a subset of $\mathcal B$. For $B_i \in \mathcal B$ we define the \emph{curvature} $\kappa_d(B_i)$ to be $2\pi$ minus the cone angle of $B_i$. Note that if $T$ is a geodesic trianglulation of $(S_{g,n}, d)$ with vertices at $\mathcal B$, then $d$ is determined by the side lengths of $T$. 

We say that $T$ is \emph{Delaunay} if when we develop any two adjacent triangles to $\H^2$, the circumbscribed disc of one triangle does not contain the fourth vertex in the interior. We call two polyhedral hyperbolic metrics $d'$ and $d''$ \emph{discretely conformally equivalent} if there exists a sequence of pairs $\{(d_t, T_t)\}_{t=1}^m$, where $d_t$ is a polyhedral hyperbolic metric on $S_{g,n}$, $T_t$ is a Delaunay triangulation of $(S_{g,n}, d_t)$, $d_1=d'$, $d_m=d''$ and for every $t$ either 

(i) $d_t=d_{t+1}$ in the sense that $(S_{g,n}, d_t)$ is isometric to $(S_{g,n}, d_{t+1})$ by an isometry isotopic to identity with respect to $\mathcal B$, or 

(ii) $T_t=T_{t+1}$ and there exists a function $u: \mathcal B \rightarrow \R$ such that for every edge $e$ of $T_t$ with vertices $B_i$ and $B_j$ we have $$\sinh\left(\frac{\len_{d_t}(e)}{2}\right)=\exp(u(B_i)+u(B_j))\sinh\left(\frac{\len_{d_{t+1}}(e)}{2}\right),$$ where $\len_{d_t}(e)$ is the length of $e$ in $d_t$. The following theorem is proved in~\cite{Lu}:

\begin{thm}
\label{Luo}
Let $d$ be a polyhedral hyperbolic metric on $S_{g,n}$ and $\kappa': {\mathcal B\rightarrow (-\infty; 2\pi)}$ be a function satisfying 
\begin{equation}
\label{dGB}
\sum_{B_i \in \mathcal B}\kappa'(B_i)>2\pi(2-2 g).
\end{equation}
Then there exists a unique metric $d'$ discretely conformally equivalent to $d$ such that $\kappa_{d'}(B_i)=\kappa'(B_i)$ for all $B_i\in\mathcal B$.
\end{thm}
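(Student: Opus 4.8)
The plan is to turn the statement into a finite-dimensional convex optimization problem over the space of discrete conformal factors, in the same spirit as the variational proof of Theorem~\ref{fil}. First I would parametrize the discrete conformal class of $d$ by a single factor $u\in\R^{\mathcal B}$: for given $u$ the edge lengths of the associated metric $d(u)$, measured on the geodesic Delaunay triangulation $T(u)$ of $d(u)$ itself, are those prescribed by the rule in item~(ii), and by the discrete conformal theory every metric conformally equivalent to $d$ arises in this way for a unique $u$ (the nonlinearity of $\sinh$ leaves no scaling ambiguity, in contrast with the Euclidean case, so this parametrization has no kernel). Writing $\Theta_i(u)$ for the total cone angle around $B_i$ in $d(u)$, we have $\kappa_{d(u)}(B_i)=2\pi-\Theta_i(u)$, and the assertion becomes that $u\mapsto(\kappa_{d(u)}(B_i))_i$ is a bijection onto the set of curvature functions satisfying $\kappa'(B_i)<2\pi$ and~(\ref{dGB}). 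That the image lands in this set is the discrete Gauss--Bonnet identity $\sum_i\kappa_{d(u)}(B_i)=2\pi(2-2g)+\operatorname{Area}(d(u))$ together with positivity of area; the content of the theorem is surjectivity (existence) and injectivity (uniqueness).

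For the variational principle I would exploit the closedness of the angle one-form $\sum_i\Theta_i(u)\,du_i$, i.e. the Schl\"{a}fli-type symmetry $\partial\Theta_i/\partial u_j=\partial\Theta_j/\partial u_i$, which drops out of differentiating the hyperbolic cosine rule inside a single triangle. This produces a potential $\mathcal F$ with $\nabla\mathcal F=(\Theta_i)_i$; setting $\mathcal G(u)=2\pi\sum_i u_i-\mathcal F(u)$ gives $\partial\mathcal G/\partial u_i=2\pi-\Theta_i(u)=\kappa_{d(u)}(B_i)$, and I would then minimize
\[
\mathcal H(u)=\mathcal G(u)-\sum_i\kappa'(B_i)\,u_i,
\]
whose gradient is $\partial\mathcal H/\partial u_i=\kappa_{d(u)}(B_i)-\kappa'(B_i)$. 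Thus $u$ solves the problem exactly when it is a critical point of $\mathcal H$. The local building block of $\mathcal F$ is, up to sign, the volume of an ideal hyperbolic tetrahedron (a Milnor--Lobachevsky/dilogarithm term)---precisely the discrete Hilbert--Einstein ingredient used in the proof of Theorem~\ref{fil}---and a direct Hessian computation shows that $\mathcal G$, and hence $\mathcal H$, is strictly convex. Strict convexity already yields uniqueness of the critical point, and therefore of $d'$.

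A technical point, rather than the real obstacle, is that $\mathcal H$ must be well defined and smooth \emph{globally} on $\R^{\mathcal B}$: as $u$ varies, $T(u)$ changes by edge flips whenever $d(u)$ ceases to be Delaunay. I would avoid working with a fixed triangulation and instead read $T(u)$ off the Epstein--Penner picture, where $u$ records a decoration (a point on each light ray over an ideal vertex) and the weighted Delaunay cells are the faces of the convex hull of these lifted points. A flip occurs exactly when four lifted points become coplanar, and there the two triangulated expressions for $\mathcal F$ agree to second order, so the piecewise-defined $\mathcal F$ (equivalently $\mathcal H$) glues into a $C^2$ convex function on all of $\R^{\mathcal B}$. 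This is the same convex-hull mechanism underlying the Epstein--Penner decompositions used elsewhere in the paper.

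The genuine obstacle is coercivity: to obtain the minimizer I must show $\mathcal H(u)\to+\infty$ as $\|u\|\to\infty$, and this is exactly where both hypotheses are consumed. As $u_i\to+\infty$ the edges at $B_i$ lengthen and $\Theta_i\to0$, so $\partial\mathcal H/\partial u_i\to 2\pi-\kappa'(B_i)>0$; the hypothesis $\kappa'<2\pi$ therefore forces growth in every ``$+$'' direction. Along the diagonal ray $u=t\mathbf 1$ one computes $\frac{d}{dt}\mathcal H(t\mathbf 1)=\sum_i\kappa_{d(t\mathbf 1)}(B_i)-\sum_i\kappa'(B_i)=\bigl(2\pi(2-2g)+\operatorname{Area}(d(t\mathbf 1))\bigr)-\sum_i\kappa'(B_i)$, which tends to $2\pi(2-2g)-\sum_i\kappa'(B_i)<0$ as $t\to-\infty$ by~(\ref{dGB}); hence $\mathcal H\to+\infty$ also as $t\to-\infty$. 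Controlling the remaining mixed directions by interpolating between these extreme estimates---and verifying that (\ref{dGB}) is the sharp threshold for properness---is the heart of the argument. Granting coercivity, the strictly convex $\mathcal H$ attains a unique minimum; its vanishing gradient gives a metric $d'$ with $\kappa_{d'}(B_i)=\kappa'(B_i)$, and convexity delivers uniqueness, which completes the proof.
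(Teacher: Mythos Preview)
Your approach is essentially the paper's, seen through a different lens. The paper does not attack Theorem~\ref{Luo} directly; it first proves the equivalent Theorem~\ref{angles} about convex prismatic complexes and then records the equivalence in Lemma~\ref{equiv}. Under that dictionary your conformal factor $u_i$ is, up to sign and a factor of $2$, the lateral edge length $r_i$ (this is exactly the computation with Corollary~\ref{trap} in the proof of Lemma~\ref{equiv}); your cone angle $\Theta_i(u)$ is the angle $\tilde\omega_i$ around the interior edge $A_iB_i$; and your $\mathcal H$ is, up to sign, the modified discrete Hilbert--Einstein functional $S_{\kappa'}$ of~(\ref{mdHE}). The global well-definedness and $C^2$ smoothness across flips that you extract from the Epstein--Penner convex hull are precisely Lemmas~\ref{inj} and~\ref{sur} together with the argument in Lemma~\ref{ppd}; your strict convexity is Lemma~\ref{concave}; and your coercivity sketch (the $u_i\to+\infty$ direction consuming $\kappa'<2\pi$, the diagonal $t\to-\infty$ direction consuming~(\ref{dGB})) corresponds to Lemma~\ref{inf1} and Corollary~\ref{inf2}, with the paper's final lemma carrying out the ``mixed directions'' step you correctly flag as the real work. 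So the difference is packaging: the paper makes the three-dimensional geometry explicit, which buys concrete formulas for the Hessian and clean geometric proofs of the boundary estimates, while your intrinsic formulation stays on the surface and appeals to the same Epstein--Penner mechanism more abstractly.

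One correction worth making: the local building block of the functional here is not the volume of an ideal tetrahedron but of a \emph{semi-ideal prism} in $\H^3$ (ideal triangle on top, its orthogonal projection to a totally geodesic plane on the bottom). It is the Schl\"afli formula for these prisms that gives the closedness of your angle one-form and the identity $\partial S/\partial r_i=\tilde\kappa_i$ in~(\ref{1pd}); invoking tetrahedron volumes or the Lobachevsky function would point you toward the Euclidean discrete-conformal story rather than the hyperbolic one.
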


The condition~(\ref{dGB}) is necessary by the discrete Gauss-Bonnet theorem: $$\sum_{B_i \in \mathcal B}\kappa'(B_i)=2\pi(2-2 g)+{\rm area}(S_{g,n},d').$$

\begin{crl}
\label{du}
Every polyhedral hyperbolic metric on $S_{g,n}$ is discretely conformally equivalent to a unique hyperbolic metric.
\end{crl}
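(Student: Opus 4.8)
The plan is to derive Corollary~\ref{du} as a direct specialization of Theorem~\ref{Luo}. The corollary asks that every polyhedral hyperbolic metric $d$ on $S_{g,n}$ be discretely conformally equivalent to a \emph{unique} hyperbolic metric — that is, a metric with no conical singularities at all. In the language of curvature, a genuine (smooth) hyperbolic metric is exactly one for which every point of $\mathcal B$ has cone angle $2\pi$, i.e. $\kappa(B_i)=0$ for all $B_i \in \mathcal B$.

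\medskip

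First I would fix the target curvature function to be identically zero: set $\kappa'(B_i)=0$ for every $B_i \in \mathcal B$. The key point is to verify that this choice satisfies the hypotheses of Theorem~\ref{Luo}. The codomain condition $\kappa' : \mathcal B \rightarrow (-\infty; 2\pi)$ is immediate since $0 \in (-\infty; 2\pi)$. The inequality~(\ref{dGB}) reads
\begin{equation*}
\sum_{B_i \in \mathcal B} \kappa'(B_i) = 0 > 2\pi(2-2g),
\end{equation*}
which holds precisely because $g>1$ forces $2-2g<0$, so the right-hand side is strictly negative while the left-hand side is zero. Thus the hypotheses of Theorem~\ref{Luo} are met.

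\medskip

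Applying Theorem~\ref{Luo} with this $\kappa'$ then yields a unique metric $d'$, discretely conformally equivalent to $d$, with $\kappa_{d'}(B_i)=0$ for all $B_i \in \mathcal B$. It remains only to observe that a polyhedral hyperbolic metric whose curvature vanishes at every point of $\mathcal B$ has cone angle $2\pi$ everywhere, and hence is smooth — that is, it is an honest complete hyperbolic metric with cusps at $\mathcal B$ rather than a genuinely polyhedral one. The uniqueness of $d'$ provided by Theorem~\ref{Luo} gives the uniqueness claimed in the corollary, and this completes the argument.

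\medskip

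I do not anticipate a serious obstacle here, since the corollary is essentially a corollary in the literal sense: the whole content is packaged into Theorem~\ref{Luo}, and the only work is to check that the zero curvature function is admissible, which is exactly where the genus assumption $g>1$ enters. The one point deserving a line of care is the identification of ``$\kappa_{d'} \equiv 0$'' with ``$d'$ is a smooth hyperbolic metric,'' which follows directly from the definition of $\kappa_d(B_i)$ as $2\pi$ minus the cone angle.
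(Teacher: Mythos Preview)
Your argument is correct and matches the paper's treatment, which presents the corollary as an immediate specialization of Theorem~\ref{Luo} with $\kappa'\equiv 0$ (the inequality~(\ref{dGB}) holding precisely because $g>1$). One small slip to fix: the resulting metric $d'$ is a smooth hyperbolic metric on the closed surface $S_g$ --- the points of $\mathcal B$ become ordinary points with cone angle $2\pi$ --- not a ``complete hyperbolic metric with cusps at $\mathcal B$''; cusps are complete ends, not cone points of angle $2\pi$.
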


The existence of $d'$ is proved in~\cite{Lu} in indirect way close to the Alexandrov method. After that it is noted that $d'$ can be found as the critical point of an appropriate strictly convex functional, although the authors do not provide an explicit formula of it. The authors of~\cite{Lu} also observe that Corollary~\ref{du} (\emph{discrete uniformization}) in fact is equivalent to Theorem~\ref{fil}. In Section~\ref{PC} we reformulate Theorem~\ref{Luo} in terms of Fuchsian polyhedra with singularities. We establish the existence and uniqueness in a different way using explicit variational approach combined with geometric observations. 

%Such a connection also holds for the notion of discrete conformality of polyhedral Euclidean metrics. For the details we refer the reader to paper~\cite{BPS}.

\subsection{Related work and perspectives}

In \cite{Lei} Leibon gave a characterization of convex ideal Fuchsian polyhedra in terms of their exterior dihedral angles. More precisely, consider $S_{g,n}$ and a triangulation $T$ with vertices at marked points. Assign a real number $\theta_e$ to each edge $e$ of $T$. We call the assignment \emph{Delaunay} if all $\theta_e \in (0; \pi)$. We call it \emph{non-singular} if the sum of $\theta_e$ around a vertex is equal to $2\pi$. Finally, we call it \emph{feasible} if for every subset $X$ of triangles of $T$ we have $\sum\limits_{e\in X}(\pi-\theta_e)>\pi|X|.$ Then the main result of \cite{Lei} can be reformulated as follows:

\begin{thm}
There exists an ideal Fuchsian polyhedron with the face triangulation $T$ and exterior dihedral angles of the upper edges equal to $\theta_e$ if and only if the assignment $\theta_e$ is Delaunay, non-singular and feasible.
\end{thm}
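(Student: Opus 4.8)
The plan is to prove the two directions separately, reading the three conditions first as necessary geometric constraints and then recovering them as sufficient ones through a variational argument.

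\emph{Necessity.} Suppose $P$ is an ideal Fuchsian polyhedron with upper face triangulation $T$ and upper exterior dihedral angles $\theta_e$. The Delaunay bound $\theta_e\in(0;\pi)$ is immediate from convexity: each upper edge is a genuine, non-flat edge of a locally convex surface, so its interior dihedral angle $\pi-\theta_e$ lies strictly between $0$ and $\pi$. For the non-singular condition, fix an ideal vertex $v$ of the upper boundary and intersect $P$ with a sufficiently small horosphere centred at $v$. This section is a convex Euclidean polygon whose sides are the traces of the faces of $T$ incident to $v$ and whose interior angle at the trace of an edge $e$ equals the interior dihedral angle $\pi-\theta_e$; since the exterior angles of a convex Euclidean polygon sum to $2\pi$, we obtain $\sum_{e\ni v}\theta_e=2\pi$. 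The feasibility inequalities are the subtle part. Read on a single face, the inequality says that the exterior dihedral angles around a triangle sum to strictly less than $2\pi$, which is exactly the condition that the three edges of an ideal triangular face do not tend to one common geodesic. For a general subset $X$ of faces I would interpret $\sum_{e\in X}(\pi-\theta_e)-\pi|X|$ as a curvature defect carried by the union of the faces in $X$ and show, by a Gauss--Bonnet type accounting that uses that each face is a genuine ideal triangle, that it is strictly positive.

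\emph{Sufficiency.} Here I would follow the variational scheme of \cite{BI}, \cite{FI} and \cite{Spr}. Fix $T$ and parametrize ideal Fuchsian polyhedra with this combinatorics by finitely many geometric invariants: shear-type (Epstein--Penner) coordinates of the upper cusp metric together with the data of the convex core. The three conditions cut out an open convex polytope $\mathcal A$ inside the affine subspace defined by the linear non-singular equations. Applying the Schläfli formula to the region bounded by the upper boundary and the convex core, one obtains a volume-type (discrete Hilbert--Einstein) functional $\mathcal F$ whose partial derivatives with respect to the bending data are the suitably regularized edge lengths, so that a configuration closes up to an honest polyhedron realizing the prescribed $\theta_e$ precisely when a gradient of $\mathcal F$ vanishes. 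The two analytic facts to establish are strict convexity of $\mathcal F$ (after a choice of sign), which yields uniqueness and identifies critical points with minima, and properness of $\mathcal F$ on $\mathcal A$, which forces the minimum to be attained at an interior point rather than escaping to $\partial\mathcal A$.

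\emph{Main obstacle.} I expect the difficulty to lie in the properness estimate. Every facet of $\mathcal A$ corresponds to the degeneration of exactly one condition: an angle $\theta_e$ reaching $0$ or $\pi$, or a feasibility inequality turning into an equality. For each such boundary direction I would have to show that $\mathcal F$, or its gradient, blows up, so that no minimizing sequence approaches $\partial\mathcal A$. The feasibility inequalities are precisely what forbid whole collections of faces of $P$ from collapsing simultaneously, so the heart of the matter is to pair each feasibility constraint with a concrete degeneration of the polyhedral geometry and to verify the corresponding coercivity. Once convexity and properness are secured, a unique interior critical point of $\mathcal F$ exists; it is the sought polyhedron, and reading off its upper dihedral angles yields the stated equivalence, with uniqueness of the realization coming from strict convexity, in line with the rigidity already invoked for Theorem \ref{fil}.
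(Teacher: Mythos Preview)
This theorem is not proved in the present paper; it is quoted from Leibon \cite{Lei}, and the paper only summarises Leibon's method in the paragraph following the statement. That method is: for a fixed assignment $\theta_e$, consider the space of \emph{angle structures} on $(S_{g,n},T)$ compatible with $\theta_e$ (i.e.\ assignments of angles to the corners of the triangles of $T$ with the correct edge and vertex sums), which forms an open bounded convex polytope in a Euclidean space; on this polytope the (suitably extended) hyperbolic volume is strictly concave, its interior critical points correspond to genuine ideal Fuchsian polyhedra, and the Delaunay/non-singular/feasible hypotheses are precisely what force the maximum to lie in the interior rather than on a facet.

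Your necessity argument is essentially correct for the Delaunay and non-singular conditions; the feasibility inequality you leave as a Gauss--Bonnet accounting, which is the right intuition but is not carried out.

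Your sufficiency scheme, however, does not match Leibon's and has a structural mismatch. You propose to parametrise polyhedra by ``shear-type (Epstein--Penner) coordinates of the upper cusp metric together with the data of the convex core'' and to use a discrete Hilbert--Einstein functional. That is the approach \emph{this} paper takes for Theorem~\ref{fil}, where the upper boundary metric is \emph{prescribed} and one searches for the right heights; the paper itself remarks that its method is ``informally dual'' to Leibon's. Here the situation is the opposite: the dihedral angles are prescribed and the metric is unknown, so the natural configuration space is the polytope of angle structures, and the natural functional is volume, whose differential (via Schl\"afli) pairs with edge lengths --- exactly the ``regularised edge lengths as gradients'' you mention, but on the angle variables, not on shear coordinates of an as-yet-undetermined metric. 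In your setup it is unclear what the domain $\mathcal A$ is (the metric is not fixed, so Epstein--Penner coordinates do not yet parametrise anything concrete), and the feasibility inequalities do not visibly cut out facets of that domain. If you rewrite the variational step over angle structures with the volume functional, your outline becomes Leibon's proof; as written, the parametrisation and the functional are borrowed from the dual problem and do not directly apply.
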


This is similar to the characterization of the dihedral angles of convex ideal polyhedra in the hyperbolic 3-space given by Rivin in~\cite{Ri2}. However, the methods of \cite{Lei} are different from~\cite{Ri2} (although they develop the ideas of another paper of Rivin~\cite{Ri3}). For an assignment $\theta_e$ Leibon defines a conformal class of angle structures on a pair $(S_{g,n}, T)$, which can be parametrized as an open bounded convex polytope in a Euclidean space. He explores the volume functional on this space, which turns out to be strictly concave. Then Leibon shows that critical points of this functional correspond to ideal Fuchsian polyhedra and that under his conditions it attains the maximum in the interior. %The study of properties of the volume is done via decomposition of a Fuchsian polytope into rectangular prisms, which is very close to the present paper.

Our approach to Theorem~\ref{fil} can be informally considered as a dual to the mentioned one. Instead of angle structures we consider the space of ideal Fuchsian polyhedra with conical singularities in the interior. In order to remove the singularities we use the so-called \emph{discrete Hilbert-Einstein functional}. There are numerous differences between these two frameworks. For instance, Leibon considers a fixed boundary combinatorics of a polyhedron, but in our case it is allowed to change. There is a hope that it will be possible to use one of these methods in order to provide a new proof of the hyperbolization of 3-manifolds relying  on finite dimensional variational methods only. We refer the reader to the article \cite{FG} considering angle structures in this context and to the survey~\cite{IzmS} discussing perspective applications of the discrete Hilbert-Einstein functional to various geometrization and rigidity problems.

It may be of interest to investigate the following generalization of Theorem~\ref{fil}. Define a \emph{double ideal Fuchsian polyhedron} $P$ as the convex hull of $n>0$ ideal points in one component of $\partial_{\infty}F$ and $m>0$ ideal point in the other one. The boundary of $P$ consists of two components isometric to $(S_{g,n}, d_1)$ and $(S_{g,m}, d_2)$ for two cusp metrics $d_1$ and $d_2$. One can ask if for any two cusp metrics metrics there is a double ideal Fuchsian polyhedron realizing both metrics at its boundary? The answer to this naive question is no. A double ideal Fuchsian polyhedron can be cut into two ideal Fuchsian polyhedra, which have the same metric at their lower boundary. But Theorem~\ref{fil} implies that $(S_{g,n}, d)$ uniquely determines it. Take two cusp metrics such that the corresponding metrics on the lower boundaries are not isometric, then these cusp metrics can not be simultaneously realized by a double ideal Fuchsian polyhedron (and clearly Theorem~\ref{fil} implies that otherwise they can). However, we may consider polyhedra in so-called \emph{quasifuchsian manifolds}.

A representation $\rho$ of $G:=\pi_1(S_g)$ in ${\rm Iso}^+(\H^3)$ is called \emph{quasifuchsian} if it is discrete, faithful and the limit set at the boundary at infinity is a Jordan curve. A hyperbolic manifold $F$ is \emph{quasifuchsian} if it is isometric to $\H^3/\rho(G)$. As in the Fuchsian case, $F$ is homeomorphic to $S_g \times \R$ and has the well-defined boundary at infinity. The \emph{convex core} of $F$ is the image of the convex hull of the limit set under the projection of $\H^3$ onto $F$. It is 3-dimensional if $F$ is not Fuchsian. An \emph{ideal quasifuchsian polyhedron} is the convex hull of $n>0$ ideal points in one component of $\partial_{\infty}F$ and $m>0$ ideal point in the other one (as the convex core is 3-dimensional, the case of vertices belonging to only one boundary component has no significance in contrast to the Fuchsian case).

To state an analog of the uniqueness part of Theorem~\ref{fil} we need a way to connect Teichm\"uller spaces for surfaces with different number of punctures. A \emph{marked cusp metric} is a cusp metric on $S_{g,n}$ together with a \emph{marking} monomorphism $\pi_1(S_g) \hookrightarrow \pi_1(S_{g,n})$. A quasifuchsian manifold $F$ has a canonical identification $\pi_1(F) \simeq \pi_1(S_g)$. For every quasifuchsian polyhedron $P$ it induces monomorphisms $\iota_+$ and $\iota_-$ of $\pi_1(S_g)$ to the fundamental groups of the upper and lower boundary components of $P$ respectively.

\begin{cnj}
\label{cnj1}
Let $d_1$ and $d_2$ be two marked cusp metrics on $S_{g,n}$ and $S_{g,m}$ respectively, $n,m>0$. Then there is a unique ideal quasifuchsian polyhedron $P$ such that one component of its boundary is isometric to $(S_{g,n}, d_1)$, the other one is isometric to $(S_{g,m}, d_2)$ and the compositions of marking monomorphisms with the maps induced by these isometries coincide with $\iota_+$ and $\iota_-$.
\end{cnj}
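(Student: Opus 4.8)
The plan is to adapt the variational machinery used for Theorem~\ref{fil} to the two-sided quasifuchsian setting. First I would fix a combinatorial type: choose triangulations of the two boundary surfaces with vertices at the marked points, together with a triangulation of the $3$-dimensional solid region bounded by them into ideal tetrahedra whose ideal vertices lie on $\partial_{\infty}F$. Rather than work directly in the rigid space of genuine polyhedra, I would relax the problem by allowing conical singularities along the interior edges of this triangulation, so that the configuration space becomes the set of quasifuchsian manifolds $F$ equipped with such a singular convex solid whose two boundary components are isometric to $(S_{g,n}, d_1)$ and $(S_{g,m}, d_2)$ with the prescribed markings. Fixing the two boundary metrics forces the induced edge lengths on the boundary to realize $d_1$ and $d_2$; the remaining freedom is the interior geometry, which I would coordinatize either by the dihedral angles of the interior tetrahedra or, dually, by the lengths of the interior edges, exactly as in \cite{BI} and \cite{FI}.

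Next I would introduce the \emph{discrete Hilbert--Einstein functional} on this configuration space, the sum over interior edges of $\ell_e(2\pi - \omega_e)$ plus boundary terms, where $\ell_e$ is the length of an interior edge and $\omega_e$ its total dihedral angle. By the Schl\"afli formula its differential is controlled by the cone angle defects $2\pi - \omega_e$ along the interior edges, so that its critical points are exactly the configurations with $\omega_e = 2\pi$ for every interior edge, i.e.\ the genuine non-singular ideal quasifuchsian polyhedra realizing $d_1$ and $d_2$. The concavity of this functional, which in the Fuchsian case underlies the proof of Theorem~\ref{fil}, should persist here, since concavity is a local property of the hyperbolic volume of ideal and hyperideal simplices and does not rely on the reflection symmetry of the Fuchsian picture. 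Establishing strict concavity transverse to the isometry group would simultaneously yield the \emph{uniqueness} statement and reduce existence to showing that the functional attains an interior maximum.

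The existence step is where I expect the real difficulty to lie. Unlike the Fuchsian case, the space of quasifuchsian structures on $S_g$ carries no reflection symmetry across the convex core, and the problem is genuinely two-sided: the two boundary metrics interact only through the $3$-dimensional convex core, so one cannot cut $P$ into two pieces sharing a common lower boundary as in the Fuchsian decomposition. Concretely, I would need to prove that the functional is proper modulo the isometry group, which amounts to ruling out every degeneration of a maximizing sequence: interior edges collapsing or diverging, ideal vertices colliding on $\partial_{\infty}F$, and above all the degeneration of the ambient quasifuchsian representation $\rho$ itself. The boundary of the quasifuchsian deformation space includes Fuchsian representations as well as cusped and otherwise degenerate limits, and controlling these limits is the crux; one natural approach is to use the Fuchsian locus, where Theorem~\ref{fil} describes the limiting behavior, together with a barrier argument showing that the functional tends to $-\infty$ as $\rho$ approaches the boundary of Teichm\"uller space. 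A careful length and angle analysis of near-degenerate simplices, in the spirit of the coercivity estimates of \cite{Spr} and \cite{FI}, would be required to make this rigorous, and I regard the absence of the Fuchsian symmetry together with the need to control the full quasifuchsian boundary as the principal obstruction to completing the argument.
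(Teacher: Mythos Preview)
The statement you are attempting to prove is labeled \textbf{Conjecture}~\ref{cnj1} in the paper, and the paper does \emph{not} prove it. Immediately after stating it the authors write: ``It is interesting to adapt our proof of Theorem~\ref{fil} to this conjecture. As a next step, manifolds with more complicated topology can be considered. It is a perspective direction of further research.'' There is therefore no proof in the paper against which to compare your proposal.

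What you have written is not a proof but a research outline, and you yourself identify the gap: you do not know how to establish properness of the discrete Hilbert--Einstein functional on the quasifuchsian configuration space, in particular how to rule out degeneration of the representation $\rho$ toward the boundary of quasifuchsian space. This is precisely the obstruction the paper flags by leaving the statement as a conjecture. The Fuchsian argument in the paper works because the lower boundary is a fixed totally geodesic surface and the configuration space is simply $\R^n$ via the lateral edge lengths $r_i$; the coercivity estimates (Lemmas~\ref{inf1} and~\ref{D}, Corollary~\ref{inf2}) exploit this parametrization and the orthogonal projection to the convex core in an essential way. In the genuinely quasifuchsian setting there is no such canonical parametrization, the convex core is three-dimensional with a bent boundary that itself varies, and your proposed barrier argument at the Fuchsian locus does not by itself control degenerations toward geometrically infinite or cusped limits. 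Until those compactness issues are resolved, the outline remains a plausible strategy rather than a proof, which is exactly the status the paper assigns to the statement.
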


A similar problem for metrics with conical singularities without uniqueness part was done in the PhD thesis of Slutskiy~\cite{Sl1} by the method of smooth approximation. Later he extended this result to the more general case of metrics of curvature $K \geq -1$ in Alexandrov sense~\cite{Sl2}.

It is interesting to adapt our proof of Theorem~\ref{fil} to this conjecture. As a next step, manifolds with more complicated topology can be considered. 
It is a perspective direction of further research.

\subsection{Overview of the paper}

In Section~\ref{HG} we overview some hyperbolic geometry that will be used in the rest. In Section~\ref{PC} we define our main objects, which are called~\emph{convex prismatic complexes}. These are ideal Fuchsian polyhedra with conical singularities around inner edges incident to cusps and orthogonal to the lower boundary. We formulate the characterization of convex prismatic complexes in terms of their conical angles. This appears to be equivalent to Theorem~\ref{Luo}. The proof of equivalence is postponed to Subsection~\ref{DC}.

The rest can be divided into four major parts. First, we show that convex prismatic complexes can be parametrized by the ``lengths'' of inner edges. This is done in Subsection~\ref{secinj}. Then we prove that in fact any lengths define a complex. To this purpose we investigate a connection with Epstein-Penner decompositions of decorated cusp surfaces in Subsection~\ref{secsur}. In Subsection~\ref{VA1} we introduce the \emph{discrete Hilbert-Einstein functional} and explain how its (unique) critical point gives a convex prismatic complex with prescribed conical angles. Subsection~\ref{VA2} is devoted to a proof of the existence of the critical point. To this purpose we study the behavior of the functional ``near infinity'' by geometric means.

\textbf{Acknowledgments.} The author would like to thank Ivan Izmestiev for numerous useful discussions and his constant attention to this work and to the referee for plenty of valuable suggestions.

\section{Hyperbolic geometry} \label{HG}

\subsection{Hyperboloid model of hyperbolic space}

In this section we fix some notation and mention results from basic hyperbolic geometry that will be used below.

%For several proofs we deal with the hyperboloid model for $\H^3$. 
Let $\R^{1,3}$ be the 4-dimensional real vector space equipped with the scalar product $$\langle \overline x, \overline y \rangle := -x_1y_1+x_2y_2+x_3y_3+x_4y_4.$$ By letters with lines above $\overline x$ we denote points of $\R^{1,3}$. Identify $$\H^3 = \{\overline x \in \R^{1,3}: \langle \overline x,\overline x\rangle = -1; x_1 > 0 \}.$$

By $\overline \H^3$ we denote the union of $\H^3$ with its boundary at infinity. We identify $\R^{1,2}$ with the plane $\{\overline x: x_4=0 \}$ and $\H^2$ with $\H^3 \cap \R^{1,2}$.

Define the three-dimensional de Sitter space

$$\ds^3=  \{\overline x \in \R^{1,3}: \langle \overline x,\overline x\rangle = 1 \}$$

and a half of the cone of light-like vectors $$\L = \{\overline x \in \R^{1,3}: \langle \overline x,\overline x\rangle = 0; x_1 > 0 \}.$$

There is a natural correspondence between ideal points of $\overline \H^3$ and generatrices of $\L$. An horosphere is the intersection of $\H^3$ and an affine plane $L$ with light-like normal vector. Then define its polar dual $\overline l \in \L$ by the equation $$\langle\overline l, \overline x\rangle=-1,$$ for all $x \in L$. Slightly abusing the notation, we will use the same letter both for an horosphere and for the defining plane. 

A hyperbolic plane $M$ in $\H^3$ is the intersection of $\H^3$ with a linear two-dimensional time-like subspace of $\R^{1,3}$ with a space-like unit normal vector $\overline m$. Again, in our notation we will not distinguish these planes in $\R^{1,3}$ from the corresponding planes in $\H^3$. (The same also holds for hyperbolic lines in $\H^2$.) However, for points we do distinguish: if $A \in \H^3$, then its defining vector in the hyperboloid model is denoted by $\overline x_A$. %For such a plane $M$ there are two unit normals satisfying the equation $$\langle\overline m, \overline x\rangle=0,$$ for all $x \in M$. These two normals naturally correspond to halfspaces defined by the plane $M$. If $\overline m \in \ds^3$ is chosen, then denote these halfspaces by $$M_+ = \{ \overline x \in \H^3: \langle \overline x,  \overline m \rangle \geq 0 \}, M_- = \{ \overline x \in \H^3: \langle \overline x,  \overline m \rangle \leq 0 \}.$$

We will need the following interpretation of scalar products between vectors of $\R^{1,3}$ in terms of distances in $\H^3$ (see \cite{Rat} or \cite{Thu}):

\begin{lm}
\label{scpr}
%1. If $\overline a \in \H^3$ and $\overline m \in \ds^3$, then $$\langle \overline  a, \overline m \rangle = \sinh\dist(A, M),$$ where the distance is signed and depends on the half-space with respect to $M$ containing $A$.
%
1. If $\overline x_A \in \H^3$ and $\overline l \in \L$, then $$\langle \overline x_A, \overline l \rangle = -e^{\dist(A, L)}$$ where the distance is signed: it is positive if $\overline x_A$ is outside of the horoball bounded by $L$ and negative otherwise.

2. If $\overline m \in \ds^3$ and $\overline l \in \L$, then $$\langle \overline  m, \overline l \rangle = \pm e^{\dist(M, L)}$$ where the distance between a plane and a horosphere is the length of the common perpendicular taken with the minus sign if the plane intersects the horosphere. The sign of the right hand side depends on at which halfspace with respect to $M$ the center of $L$ lies.

3. If $\overline l_1 \in \L$ and $\overline l_2 \in \L$, then $$\langle \overline  l_1, \overline l_2 \rangle = -2e^{\dist(L_1, L_2)}$$ where the distance between two horospheres is the length of the common perpendicular taken with the minus sign if these horospheres intersect.
\end{lm}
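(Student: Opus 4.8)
The plan is to exploit the fact that both sides of each identity are invariant under the group of linear isometries of $\R^{1,3}$ preserving the sheet $\H^3$, which is precisely the isometry group of $\H^3$. The left-hand scalar products are invariant by definition, while the right-hand distances are invariant because these maps act as isometries of $\H^3$ and carry horospheres to horospheres, planes to planes, and each polar-dual vector $\overline l$ or $\overline m$ to the polar dual of the image. Since this group acts transitively on each of the three types of configurations up to a single real parameter --- the signed distance --- it suffices to verify each formula on one convenient one-parameter family and then invoke invariance. In every case the computation reduces to the elementary identity $\cosh s \mp \sinh s = e^{\mp s}$.

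For part 1, I would normalize the horosphere to be the one dual to $\overline l_0 = (1,1,0,0)$, whose defining plane is $\{x_2 = x_1 - 1\}$ and which passes through $(1,0,0,0)$, and take the perpendicular geodesic through that point asymptotic to the ideal center, parametrized by $s \mapsto (\cosh s, \sinh s, 0, 0)$. A direct computation gives $\langle (\cosh s, \sinh s, 0, 0), \overline l_0 \rangle = -e^{-s}$, and checking for which $s$ the point lies outside the horoball shows that the signed distance equals $-s$, which yields the claimed formula. Part 3 is analogous: I would place the two ideal centers at the endpoints of the geodesic $s \mapsto (\cosh s, \sinh s, 0, 0)$, write $\overline l_1 = a(1,1,0,0)$ and $\overline l_2 = b(1,-1,0,0)$ with $a,b > 0$, compute $\langle \overline l_1, \overline l_2 \rangle = -2ab$, locate the points $s_1 = \ln a$ and $s_2 = -\ln b$ where the two horospheres meet the common perpendicular, and read off that the signed distance along it is $\ln(ab)$; the factor $2$ is forced by the normalization $\langle \overline l, \overline x \rangle = -1$ of the dual vectors.

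For part 2, I would fix the plane $M_0 = \{x_3 = 0\} \cap \H^3$ with unit normal $\overline m_0 = (0,0,1,0)$ and observe that the common perpendicular from $M_0$ to a horosphere is the geodesic through the center of that horosphere orthogonal to $M_0$; after normalizing it to $s \mapsto (\cosh s, 0, \sinh s, 0)$, the horosphere is dual to $c(1,0,\pm 1,0)$ for some $c > 0$. Then $\langle \overline m_0, c(1,0,\pm 1,0) \rangle = \pm c$, while intersecting the horosphere with the perpendicular gives $c = e^{\dist(M_0, L)}$, so that $\langle \overline m, \overline l \rangle = \pm e^{\dist(M, L)}$, with the sign determined by the side of $M_0$ on which the ideal center lies.

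The computations are short, so the main care is with the sign conventions. In each part one must check precisely when the configuration is in the intersecting or the outside regime and confirm that this matches the stated signing of the distance --- negative when the horosphere is crossed, positive when outside the horoball. The only mildly geometric point, which I expect to be the main obstacle to a fully rigorous write-up, is verifying that the common perpendicular really passes through the ideal center of the horosphere, since this is what guarantees that the chosen one-parameter family meets every orbit of the isometry group.
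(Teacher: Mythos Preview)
Your approach is correct and is the standard textbook argument for these identities: reduce by the transitivity of the isometry group to a one-parameter family placed along a convenient geodesic, then compute directly. The computations you sketch all check out, including the sign analysis.

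Note, however, that the paper does not actually prove this lemma. It is stated with a reference to \cite{Rat} and \cite{Thu} and used as background. So there is no ``paper's proof'' to compare against; what you have written is essentially a self-contained justification of a cited fact, and the method you chose---normalize, then compute $\cosh s \mp \sinh s = e^{\mp s}$---is precisely the one found in those references. The only point where you should be a bit more explicit in a full write-up is the one you flag yourself: that the geodesic asymptotic to the ideal center of a horosphere is orthogonal to every horosphere centered there, which is what makes your chosen curve the common perpendicular in parts~2 and~3. This is immediate once one observes that horospheres are level sets of a Busemann function whose gradient has unit length along that geodesic.
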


From now on we assume that ideal points under our consideration are always equipped with (fixed) horospheres. Under this agreement, we use the word \emph{distance} between two points even in the cases when one of them or both are ideal. In the latter case, by the distance we mean the signed distance between the corresponding horospheres: we write it with the minus sign if the horospheres intersect. In the former case, the distance means the signed distance from the non-ideal point to the  horosphere at the ideal point. Similarly, we speak about the \emph{length} of a segment even if one or two of its endpoints are ideal.

\begin{lm}
\label{ideal}
Let $ABC$ be an ideal hyperbolic triangle with side lengths $a$, $b$ and $c$ respectively and $\alpha_A$ be the length of the part of the horosphere at $A$ inside the triangle. Then $$\alpha^2_A = e^{a-b-c}.$$
\end{lm}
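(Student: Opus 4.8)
The plan is to reduce everything to a computation in the upper half-plane model of the hyperbolic plane $\H^2$, since the ideal triangle $ABC$ spans a copy of $\H^2$ inside $\H^3$ and both the side lengths and the horocyclic arc are intrinsic to this plane. Applying an orientation-preserving isometry, I would first normalize the configuration so that the vertex $A$ sits at $\infty$. Then the sides $AB$ and $AC$ become vertical geodesic rays, say over the boundary points $p$ and $q$, the side $BC$ becomes the semicircle with endpoints $p$ and $q$, and the horocycle at $A$ becomes a horizontal line $\{y = t_A\}$. Assuming, as is implicit in the statement, that this horocycle is small enough that its arc inside the triangle is a single segment meeting both sides, the portion lying inside the triangle is the horizontal segment between the two vertical sides, so with respect to the metric $ds = |dz|/y$ its length is $\alpha_A = |q-p|/t_A$. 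The horocycles at $B$ and $C$ are Euclidean circles tangent to the real axis at $p$ and $q$; denote their Euclidean diameters by $d_B$ and $d_C$. I will take $a=\dist(B,C)$ (the side opposite $A$) and $b=\dist(A,C)$, $c=\dist(A,B)$.

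Next I would read off the two side lengths incident to $A$. The common perpendicular of the horocycle at $A$ and the horocycle at $B$ is the vertical geodesic over $p$, which meets the two horocycles at heights $t_A$ and $d_B$; integrating $dy/y$ gives $c = \dist(A,B) = \ln(t_A/d_B)$, correctly signed (positive exactly when the horocycles are disjoint, i.e.\ $t_A > d_B$). The identical computation at $q$ yields $b = \dist(A,C) = \ln(t_A/d_C)$. Equivalently, these are instances of Lemma~\ref{scpr}.

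The one genuinely nontrivial quantity is $a = \dist(B,C)$, the signed distance between the two \emph{finite} horocycles. To compute it I would apply a second isometry $\phi(z) = -1/(z-p)$, which sends $B$ to $\infty$ and preserves the upper half-plane. Under $\phi$ the horocycle at $B$ becomes a horizontal line whose height a direct substitution shows to be $1/d_B$, while the horocycle at $C$ becomes a circle tangent at $\phi(q)$ whose Euclidean diameter scales by the conformal factor $|\phi'(q)| = 1/(q-p)^2$, hence equals $d_C/(q-p)^2$. Applying the vertical computation of the previous paragraph in this new picture gives $e^{a} = (q-p)^2/(d_Bd_C)$. The main care here is the bookkeeping of signs and the verification that, for a M\"obius map, a horocycle tangent at a boundary point $z_0$ has its Euclidean diameter multiplied \emph{exactly} by $|\phi'(z_0)|$; this is the step I expect to be the most delicate, although it is only a short explicit check.

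Finally I would combine the three expressions. Since $e^b e^c = t_A^2/(d_Bd_C)$ and $e^a = (q-p)^2/(d_Bd_C)$, the diameters cancel and $e^{a-b-c} = e^a/(e^b e^c) = (q-p)^2/t_A^2 = \alpha_A^2$, which is the claim. As a sanity check, rewriting $e^{a}, e^{b}, e^{c}$ through Lemma~\ref{scpr} in terms of the light-cone vectors $\overline l_A,\overline l_B,\overline l_C$ dual to the three horospheres recovers the Penner-type identity $\alpha_A^2 = -2\langle \overline l_B,\overline l_C\rangle/(\langle \overline l_A,\overline l_B\rangle\langle \overline l_A,\overline l_C\rangle)$, which provides a coordinate-free alternative route to the same formula.
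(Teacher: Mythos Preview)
Your proof is correct. The paper itself does not prove this lemma---it simply refers to Penner \cite{Pen}, Proposition~2.8---so there is no in-paper argument to compare against directly.

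Your upper half-plane computation is the standard elementary route: placing $A$ at $\infty$ makes $\alpha_A = |q-p|/t_A$ and the expressions $e^{b}=t_A/d_C$, $e^{c}=t_A/d_B$ immediate, and the M\"obius map $\phi(z)=-1/(z-p)$ sending $B$ to $\infty$ reduces the computation of $e^{a}$ to the same vertical calculation. The one step you flagged as delicate---that the Euclidean diameter of a horocycle tangent at a real point $z_0$ scales \emph{exactly} by $|\phi'(z_0)|$ under a M\"obius map of the half-plane---is indeed exact and not merely first order; it follows, for instance, from the linear action of $PSL(2,\R)$ on the light cone, or by the short explicit check you indicate. So no gap arises there.

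Your closing remark, recasting the identity via Lemma~\ref{scpr} as
\[
\alpha_A^2 \;=\; \frac{-2\langle \overline l_B,\overline l_C\rangle}{\langle \overline l_A,\overline l_B\rangle\,\langle \overline l_A,\overline l_C\rangle},
\]
is essentially Penner's own argument in the Minkowski model, so your primary half-plane computation and your sanity check in fact give two independent proofs, the second of which coincides with the cited source.
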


A proof can be found in \cite{Pen}, Proposition 2.8. For the differential formulas in Section~\ref{VA} we will need a semi-ideal version of this lemma: %It is less known, hence, we provide a full proof.

\begin{figure}
\begin{center}
\includegraphics[scale=0.3]{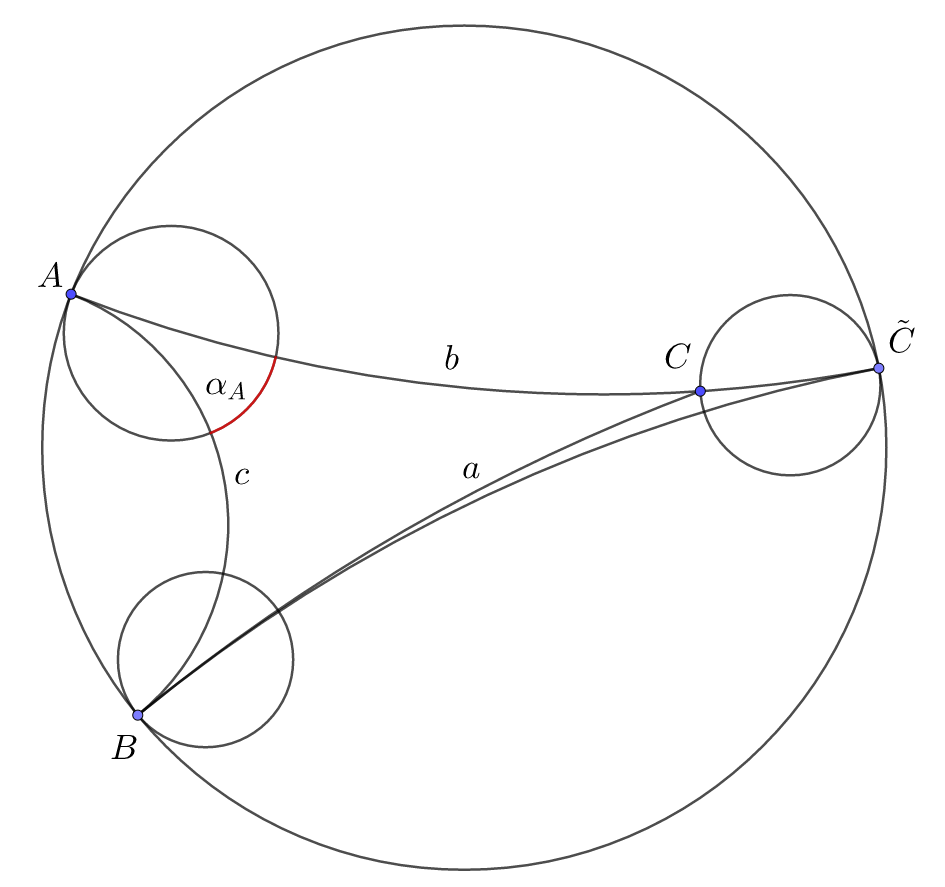}
\caption{To the proof of Lemma~\ref{h3}.}
\label{Pic1}
\end{center}
\end{figure}

\begin{lm}
\label{h3}
Let $ABC$ be a hyperbolic triangle with ideal vertices $A$ and $B$, non-ideal vertex $C$, side lengths $a$, $b$ and $c$ respectively and $\alpha_A$ be the length of the part of the horosphere at $A$ inside the triangle. Then $$\alpha^2_A = e^{a-b-c}-e^{-2b}.$$
\end{lm}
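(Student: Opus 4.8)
The plan is to work directly in the hyperboloid model and reduce the statement to the scalar-product dictionary of Lemma~\ref{scpr}, in close analogy with Penner's proof of the fully ideal case (Lemma~\ref{ideal}). Denote by $\overline l_A, \overline l_B \in \L$ the null vectors dual to the horospheres at the ideal vertices $A$ and $B$, and by $\overline x_C \in \H^3$ the vector of the finite vertex $C$; since the triangle is planar we may place everything in a fixed copy of $\H^2 \subset \R^{1,2}$. With the convention $a = BC$, $b = CA$, $c = AB$, Lemma~\ref{scpr} records the three pairwise products $\langle \overline l_A, \overline l_B\rangle = -2e^{c}$, $\langle \overline x_C, \overline l_A\rangle = -e^{b}$ and $\langle \overline x_C, \overline l_B\rangle = -e^{a}$. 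These are the only inputs I will need.

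Next I would locate the two points where the horocycle at $A$ (the section of the horosphere $\{\overline x : \langle \overline l_A, \overline x\rangle = -1\}$ by the plane) meets the two sides issuing from $A$. The side $AB$ is the trace of $\mathrm{span}(\overline l_A, \overline l_B)$, and imposing $\langle \overline l_A, \cdot\rangle = -1$ together with the normalization $\langle \cdot,\cdot\rangle = -1$ pins down the intersection point $\overline q_B = \tfrac12\overline l_A + \tfrac12 e^{-c}\overline l_B$. Likewise the side $CA$ is the trace of $\mathrm{span}(\overline l_A, \overline x_C)$, and the same two conditions give $\overline q_C = \tfrac12(1 - e^{-2b})\overline l_A + e^{-b}\overline x_C$. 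The crucial feature is that the coefficient of $\overline l_A$ in $\overline q_C$ is $\tfrac12(1-e^{-2b})$ rather than the value $\tfrac12$ appearing in the fully ideal situation: this deviation is exactly what will produce the correction term $-e^{-2b}$.

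Finally I would compute the horocyclic length $\alpha_A$, i.e.\ the arc of the horocycle between $\overline q_B$ and $\overline q_C$. The key identity is that for any two points $\overline p, \overline p'$ on a common horocycle, parametrized by arc length, one has $\langle \overline p, \overline p'\rangle = -1 - \tfrac12 (\Delta t)^2$, where $\Delta t$ is the arc length between them; this follows by inserting the standard parametrization $\overline p(t) = \overline x_0 + t\,\overline u + \tfrac{t^2}{2}\overline l_A$ (with $\overline u$ a unit space-like vector orthogonal to $\overline x_0$ and $\overline l_A$) into the scalar product. Hence $\alpha_A^2 = -2 - 2\langle \overline q_B, \overline q_C\rangle$, and expanding $\langle \overline q_B, \overline q_C\rangle$ using the three products above collapses, after the null term $\langle \overline l_A, \overline l_A\rangle = 0$ and the cross terms are gathered, to $\alpha_A^2 = e^{a-b-c} - e^{-2b}$, as claimed. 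I expect the main obstacle to be the bookkeeping around this arc-length identity together with the signs of the various signed distances to horospheres, rather than anything conceptual; as an independent cross-check one can send $A$ to infinity in the upper half-plane model, where $\alpha_A$ becomes a horizontal Euclidean segment of length $w/h$ and the three distances $a,b,c$ are read off from the Busemann function of a horocycle at a finite boundary point, recovering the same formula.
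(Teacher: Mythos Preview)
Your argument is correct. The explicit points $\overline q_B=\tfrac12\overline l_A+\tfrac12 e^{-c}\overline l_B$ and $\overline q_C=\tfrac12(1-e^{-2b})\overline l_A+e^{-b}\overline x_C$ do lie on the horocycle $\{\langle\overline l_A,\cdot\rangle=-1\}\cap\H^2$ and on the respective sides, and the horocyclic arc-length identity $\langle\overline p,\overline p'\rangle=-1-\tfrac12(\Delta t)^2$ then yields $\alpha_A^2=-2-2\langle\overline q_B,\overline q_C\rangle=e^{a-b-c}-e^{-2b}$ after a short expansion.

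This is a genuinely different route from the paper's proof. The paper does not compute the two horocycle-side intersection points at all; instead it extends the ray $AC$ to an ideal point $\tilde C$, decorates $\tilde C$ with the horocycle through $C$, and observes that the arc $\alpha_A$ is unchanged in the resulting fully ideal triangle $AB\tilde C$. Lemma~\ref{ideal} then gives $\alpha_A^2=e^{\tilde a-b-c}$, and the remaining work is to compute $e^{\tilde a}$ by writing $\overline l_{\tilde C}=2\overline x_C-e^{-b}\overline l_A$ and pairing with $\overline l_B$. So the paper reduces to the known ideal case via a geometric extension, whereas you bypass Lemma~\ref{ideal} entirely and use the horocycle arc-length formula directly. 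Your approach is more self-contained (and in fact reproves Lemma~\ref{ideal} as the special case where $\overline x_C$ is replaced by a null vector and the coefficient $\tfrac12(1-e^{-2b})$ collapses to $\tfrac12$); the paper's approach has the virtue of making the correction term $-e^{-2b}$ appear transparently as the discrepancy $e^{\tilde a}=e^a-e^{c-b}$ coming from the finite vertex.
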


\begin{proof}
Consider the hyperboloid model. Let $\tilde C$ be the intersection of the ray $AC$ with boundary at infinity and put the horocycle at $\tilde C$ passing through $C$ (see Figure~\ref{Pic1}). Denote the side lengths of this new ideal decorated triangle by $\tilde a$, $\tilde b = b$ and $\tilde c = c$. From Lemma~\ref{ideal} it follows that $\alpha^2_A = e^{\tilde a - \tilde b - \tilde c} = e^{\tilde a - b - c}$. Hence, we need to calculate $\tilde a$.

%By $\overline l_A$, $\overline l_B$, $\overline l_{\tilde C}$ denote the polar light-like vectors dual to the horocycles at the respective vertices; by $\overline x_C$ denote the vector of the point $C$ at hyperboloid. 
We have $$\overline l_{\tilde C} = \lambda \overline x_C + \mu \overline l _A,$$ $$\langle \overline l_{\tilde C} . \overline l_A \rangle = -\lambda e^b = -2e^b.$$ Hence, we obtain that $\lambda = 2$. Now calculate $$\langle \overline l_{\tilde C} . \overline x_C \rangle = -1 = -\lambda - \mu e^b.$$ We obtain $\mu = -e^{-b}$. We need only to evaluate $$\langle \overline l_{\tilde C} . \overline l_B \rangle = -2e^{\tilde a} = -2 e^a +2e^{c-b}.$$ We get $e^{\tilde a} = e^a - e^{c-b}$. Finally, $\alpha^2_A = e^{a-b-c} - e^{-2b}$.
\end{proof}

\subsection{Epstein--Penner decompositions}
\label{EP}

We recall the concept of Epstein-Penner ideal polygonal decomposition of a decorated cusped hyperbolic surface (see \cite{EP}, \cite{Pen}, \cite{Mar}).

Let $(S_{g,n}, d)$ be a hyperbolic cusp surface. Fix a decoration, i.e. an horocycle at every cusp. Then the space of all decorations can be identified with $\R^n$. A point ${\bf r} \in \R^n$ corresponds to the choice of horocycles at the distances $r_1, \ldots, r_n$ from the fixed ones.

Consider the hyperboloid model of $\H^2$. Represent $(S_{g,n}, d)$ as $\H^2/\Gamma$ where $\H^2 \subset \R^{1,2}$ and $\Gamma$ is a discrete subgroup of ${\rm Iso}^+(\H^2)$ isomorphic to $\pi_1(S_g)$. Take the decoration defined by ${\bf r} \in \R^n$. By $L^1_i, L^2_i, \ldots$ denote horocycles in the orbit of the horocycle at $i$-th cusp under the action of $\Gamma$. By $\mathcal L$ denote the union of their polar vectors $\overline l^k_i$.

Let $C$ be the convex hull of the set $\{\overline l^j_i \}$ in $\R^{1,2}$. Its boundary $\partial C$ is divided into two parts $\partial_l C \sqcup \partial_t C$ consisting of light-like points and time-like points. Below we describe well-known properties of this construction. For proofs we refer to \cite{Mar}, Chapter 5.1.7, \cite{EP} and \cite{Pen}.

\begin{lm}
\label{EPlm}
%~\
\begin{itemize}
\item{The convex hull $C$ is 3-dimensional.}
\item{The set $\partial_l C = C \cap \L$ is the set of points $\alpha \overline l^k_i$ for $\alpha \geq 1$.}
\item{Every time-like ray intersects $\partial_t C$ exactly once.}
\item{The boundary $\partial_t C$ is decomposed into countably many Euclidean polygons. The supporting plane containing each polygon is space-like. This decomposition is $\Gamma$-invariant and projects  to a decomposition of $S_{g,n}$ into finitely many ideal polygons.}
\end{itemize}
\end{lm}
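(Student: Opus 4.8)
The plan is to follow the classical Epstein--Penner argument, deriving all four assertions from two geometric facts: the local finiteness of $\mathcal L$ in $\R^{1,2}$, and the identification of the recession cone of $C$ with the solid future light cone. I would first fix the basepoint $\bar x_0 = (1,0,0) \in \H^2$, so that Lemma~\ref{scpr} gives $\langle \bar x_0, \bar l \rangle = -e^{\dist(x_0, L)}$ for the polar vector $\bar l$ of each orbit horocycle $L$. Since $\Gamma$ acts properly discontinuously and the orbit of horoballs is locally finite, only finitely many orbit horocycles meet any fixed ball around $x_0$ and only finitely many of the corresponding horoballs contain $x_0$; hence $\dist(x_0, L) \ge d_0$ along the orbit for some $d_0$, only finitely many $\bar l \in \mathcal L$ lie in any bounded region, and $C = \mathrm{conv}(\mathcal L)$ is closed. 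Writing $\bar l = (l_1, l_2, l_3)$ the bound becomes $l_1 = e^{\dist(x_0,L)} \ge e^{d_0} > 0$; as $l_1$ is linear and nonnegative on the solid future cone, every point of $C$ satisfies $l_1 \ge e^{d_0}$, so the origin is strictly separated from $C$. Finally, $\Gamma$ acts linearly on $\R^{1,2}$ as a non-elementary, hence irreducible, subgroup of $O(1,2)$; since it permutes $\mathcal L$ it preserves $C$ and the direction space of $\mathrm{aff}(C)$, which therefore cannot be a proper invariant subspace, so $C$ is $3$-dimensional, proving the first assertion.

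Next I would compute the recession cone of $C$. As $(S_{g,n},d)$ has finite volume, $\Gamma$ is a lattice with limit set all of $\partial\H^2$, so the null directions occurring in $\mathcal L$ are dense in the circle of null directions and escape to infinity along suitable orbit sequences; hence every future null direction, and thus the whole solid future cone, is a recession direction of $C$, while conversely $C$ lies in that cone. With the recession cone identified, the second assertion is immediate: for $\bar l \in \mathcal L$ the null vector $\bar l$ is itself a recession direction, so $\alpha \bar l = \bar l + (\alpha-1)\bar l \in C$ for all $\alpha \ge 1$; conversely a null point of $C$ lies on an extreme ray of the recession cone that meets $\mathcal L$, and since each cusp carries a single horocycle, hence one polar vector per null direction, it must lie on some ray $\{\alpha \bar l^k_i : \alpha \ge 1\}$. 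The third assertion then follows by combining the separation of the origin with the recession cone: a future time-like ray from the origin starts outside the closed convex set $C$ but is eventually contained in it, so it meets $\partial C$ exactly once, in a time-like point of $\partial_t C$.

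For the fourth assertion, local finiteness makes $C$ a locally finite convex polyhedron, so $\partial_t C$ breaks into flat faces. Each support plane of such a face faces the origin and so has a time-like conormal, making the plane space-like; a space-like plane in $\R^{1,2}$ carries a Euclidean induced metric, so the faces are Euclidean polygons whose vertices are the null points of $\mathcal L$. The face decomposition is $\Gamma$-invariant because $\Gamma$ preserves $C$ and its face lattice, and radial projection $\partial_t C \to \H^2$ is a $\Gamma$-equivariant homeomorphism by the third assertion, carrying the faces to ideal hyperbolic polygons with vertices at the cusps. Passing to the quotient yields a decomposition of $S_{g,n}$, and the number of cells is finite because $\vol(S_{g,n})$ is finite while an ideal $k$-gon has area $(k-2)\pi \ge \pi$.

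The technical heart, and the step I expect to be the main obstacle, is the quantitative control behind local finiteness together with the behaviour of $\mathcal L$ near the null cone: one must check not merely that the heights $l_1$ are bounded below, but that they tend to infinity as the null directions approach limit points that are not cusps, which is precisely what forces $C$ to be closed, genuinely polyhedral with finitely many faces near each direction, and to touch $\L$ only along the cusp rays. These estimates are the content of~\cite{EP}, \cite{Pen} and \cite{Mar}, whose conclusions we have quoted above.
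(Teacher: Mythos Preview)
The paper does not give its own proof of this lemma: it is stated as background, with the sentence ``For proofs we refer to \cite{Mar}, Chapter 5.1.7, \cite{EP} and \cite{Pen}'' immediately preceding the statement. Your sketch is a faithful outline of the classical Epstein--Penner argument contained in those references, and you correctly flag the one genuinely delicate step---that $C$ meets $\L$ only along the cusp rays, which needs the height estimate showing $l_1\to\infty$ along non-cusp null directions---as the technical heart deferred to the literature. In that sense your approach and the paper's coincide.
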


\begin{dfn}
This decomposition is called \emph{the Epstein-Penner decomposition} of $(S_{g,n}, d)$ with the decoration $\bf r$.
\end{dfn}

\begin{dfn}
\emph{An Epstein--Penner triangulation} of $(S_{g,n}, d)$ is a geodesic triangulation with vertices at cusps that refines the Epstein--Penner decomposition for some decoration $\bf r$.
\end{dfn}

%The space $\R^n$ is subdivided into cells corresponding to Epstein--Penner decompositions. Each $n$-dimensional cell corresponds to a decomposition that is a triangulation itself. %Epstein--Penner triangulations and cells are well studied, see \cite{Pen}, \cite{PenB}. %For instance, it is known that there are only finitely many Epstein--Penner triangulations for a fixed $S_{g,n}$, see \cite{Aki}.
%A point ${\bf r} \in \R^n$ is an inner point of an Epstein-Penner cell if the Epstein-Penner decomposition for this choice of decorations is a triangulation itself. Boundary points between cells correspond to points for which in the Epstein--Penner decomposition there are polygons different from triangles.

\subsection{Trapezoids and prisms} \label{PC1}

%Here we define \emph{prisms} from which is Section~\ref{PC} we will glue our main objects called \emph{convex prismatic complexes}. The lateral faces of prisms are called \emph{trapezoids}. T

%Our main objects are \emph{convex prismatic complexes} that will be defined in Section~\ref{PC}. They glued from basic building blocks called \emph{prisms}. The lateral faces are called \emph{trapezoids}. In this subsection we define trapezoids and prisms and give all necessary formulas for the future computations.

%A lateral face of a prism is a special hyperbolic quadrilateral, which we naturally call a {\it trapezoid}.

\begin{dfn}
A \emph {trapezoid} is the convex hull of a segment $A_1A_2 \subset \overline \H^2$ and its orthogonal projection to a line such that the segment $A_1A_2$ does not intersect this line. It is called \emph{ultraparallel} if the line $A_1A_2$ is ultraparallel to the second line. It is called \emph{semi-ideal} if both $A_1$ and $A_2$ are ideal. If some vertices are ideal, then they are equipped with \emph{canonical} horocycles.
\end{dfn}

By $B_i$ denote the image of $A_i$ under the projection, $i=1,2$. We refer to $A_1A_2$ as to the \emph{upper edge}, to $B_1B_2$ as to the \emph{lower edge} and to $A_iB_i$ as to the \emph{lateral edges}. The vertices $A_i$ sometimes are also called \emph{upper} and $B_i$ are called \emph{lower}. We denote by $l_{12}$ the length of $A_1A_2$, by $a_{12}$ the length of $B_1B_2$, by $r_i$ the length of the edge $A_iB_i$, by $\alpha_{12}$ and $\alpha_{21}$ the angles at vertices $A_1$ and $A_2$ (or the lengths of horocycles if the vertices are ideal) and by $\rho_{12}$ the distance from the line $A_1A_2$ to the line $B_1B_2$ in the case of ultraparallel trapezoid. %The reason behind lower indices for all edges and angles will become clear later. %The notions of lateral, upper and lower edges are similar. We need also the following planar version of Lemma \ref{existence}.

\begin{figure}
\begin{center}
\includegraphics[scale=0.2]{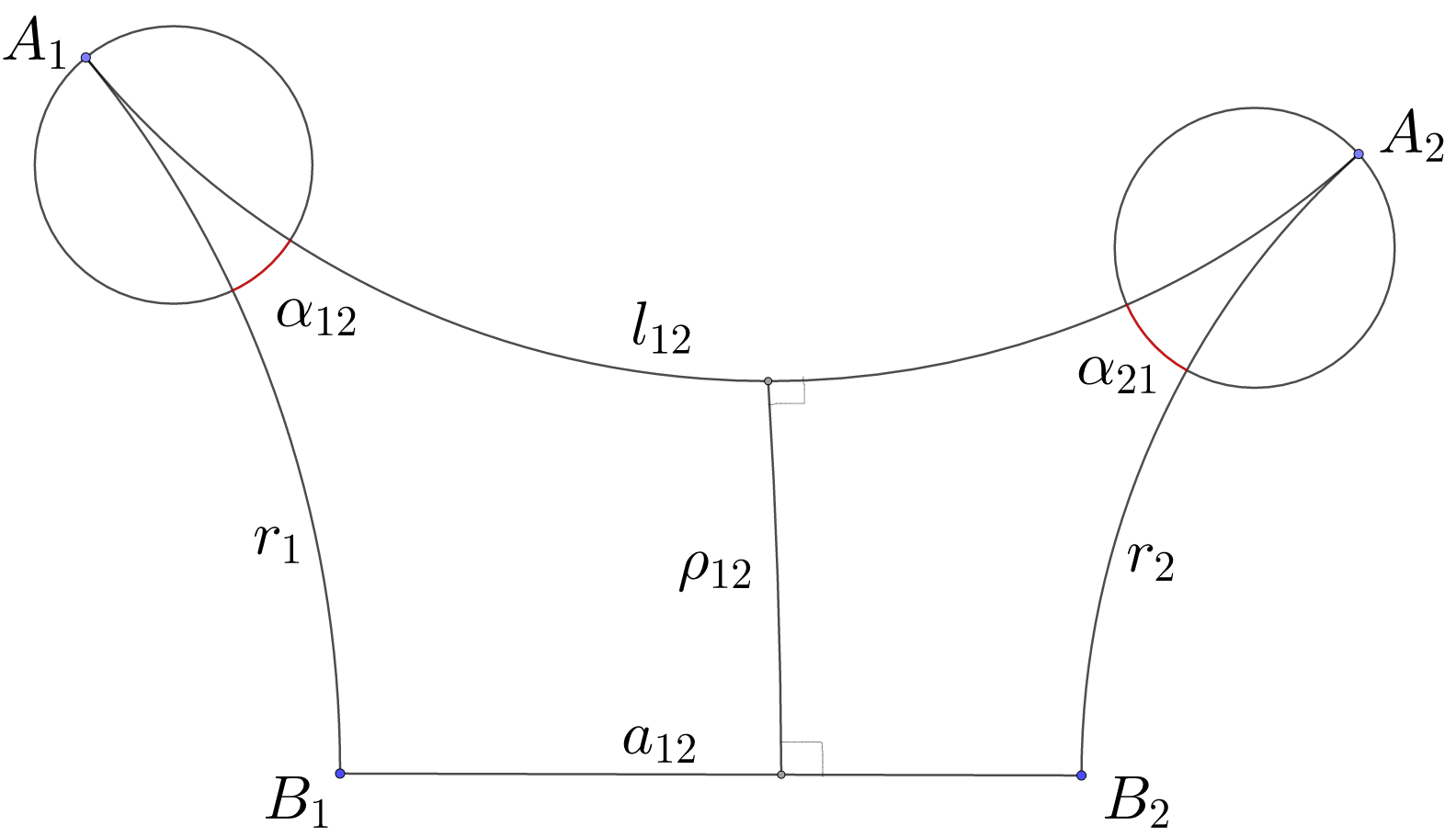}
\caption{A semi-ideal ultraparallel trapezoid. Ideal vertices are equipped with horocycles.}
\label{Pic3}
\end{center}
\end{figure}

\begin{dfn}
A \emph {prism} is the convex hull of a triangle $A_1A_2A_3\subset \overline \H^3$ and its orthogonal projection to a plane such that the triangle $A_1A_2A_3$ does not intersect this plane. It is called \emph{ultraparallel} if the plane $A_1A_2A_3$ is ultraparallel to the second plane. It is called \emph{semi-ideal} if all $A_1$, $A_2$ and $A_3$ are ideal. If some vertices are ideal, then they are equipped with \emph{canonical} horospheres.
\end{dfn}

Similarly to trapezoids, by $B_i$ we denote the image of $A_i$ under the projection, $i=1,2,3$, and we distinguish edges and faces of a prism into \emph{upper, lower} and \emph{lateral}. The lateral faces of a prism are trapezoids. The dihedral angles of edges $B_iB_j$ are equal $\pi/2$. The dihedral angles of edges $A_1A_2$, $A_2A_3$ and $A_3A_1$ are denoted by $\phi_3$, $\phi_1$ and $\phi_2$ respectively. The dihedral angle of an edge $A_iB_i$ is denoted by $\omega_i$. 

%Similarly to trapezoids, by $B_1$, $B_2$ and $B_3$ denote the images of $A_1$, $A_2$ and $A_3$ under the projection. We refer to $A_iA_j$ as to the \emph{upper edges}, to $B_iB_j$ as to the \emph{lower edges} and to $A_iB_i$ as to the \emph{lateral edges}. In the same way, we call the face $A_1A_2A_3$ {\it upper}, the face $B_1B_2B_3$ {\it lower} and the other faces {\it lateral}. The lateral faces of a prism are trapezoids. The dihedral angles of edges $B_iB_j$ are equal $\pi/2$. The dihedral angles of edges $A_1A_2$, $A_2A_3$ and $A_3A_1$ are denoted by $\phi_3$, $\phi_1$ and $\phi_2$ respectively. The dihedral angle of an edge $A_iB_i$ is denoted by $\omega_i$. 

\begin{figure}
\begin{center}
\includegraphics[scale=0.3]{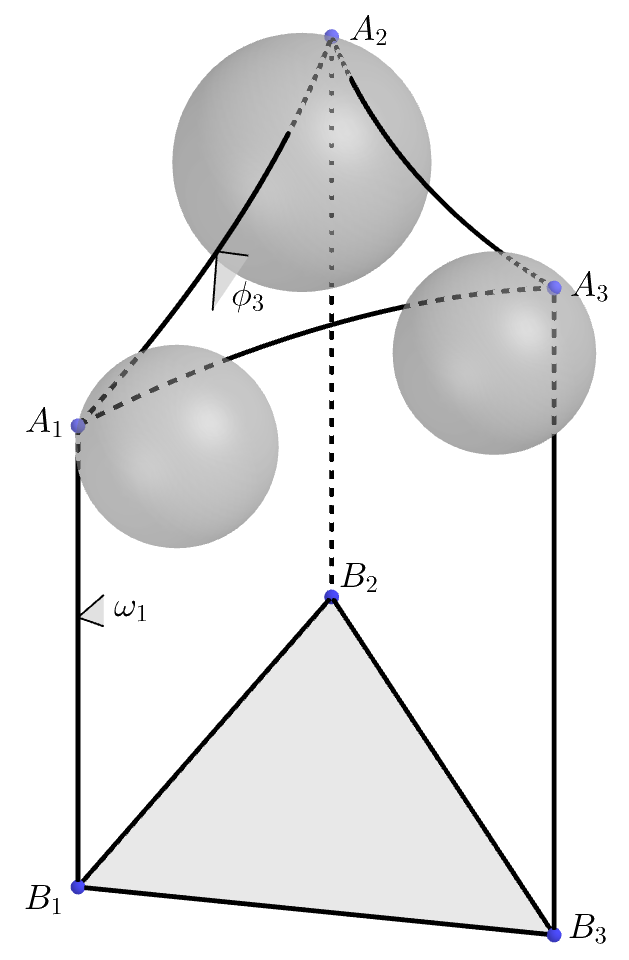}
\caption{A semi-ideal prism. Ideal vertices are equipped with horospheres.}
\label{Pic2}
\end{center}
\end{figure}

%\begin{lm}
%\label{ple}
%Let $A_1A_2 \subset \overline \H^2$ be a segment, $r_1$ and $r_2$ be two real numbers. If $A_i$ is an ideal point, then we suppose that it is equipped with a horocycle. If $A_i \in \H^2$, we assume that $r_i >0$. Then there exists at most one trapezoid up to isometry such that its upper edge is isometric to $A_1A_2$ and the lengths of the corresponding lateral edges are equal to $r_i$. (See Figure~\ref{Pic3}.)
%\end{lm}

%The proof goes along the lines of the proof of Lemma \ref{existence} and is quite straightforward, hence, we omit it.

%\begin{crl}
%\label{plex}
%Given two trapezoids $A_1A_2B_2B_1$ and $A_1'A_2'B_2'B_1'$ such that $A_1$ and $A_1'$, $A_2$ and $A_2'$ have the same type (finite or ideal), $|A_1A_2| = |A_1'A_2'|$, $|A_1B_1|=|A_1'B_1'|$ and $|A_2B_2|=|A_2'B_2'|$ then there exists a unique isometry of $\H^2$ that maps the vertices of these trapezoids respectively. It maps horospheres decorating ideal vertices to horospheres.
%\end{crl}

In Section~\ref{PC} we will use semi-ideal prisms to construct our main objects: \emph{convex prismatic complexes}. In most cases we need only semi-ideal ultraparallel prisms and trapezoids. The only place, where not ultraparallel prisms appear, is Lemma~\ref{ultrapar} where we prove that actually convex prismatic complexes consist only from ultraparallel ones. The only place, where not semi-ideal prisms are used, is the proof of Lemma~\ref{inj}. In order to prove this lemma, we need to show that ultraparallel prisms (not necessarily semi-ideal) are uniquely determined by the lengths of lateral and upper edges. %To this purpose we establish several formulas for ultraparallel trapezoids that also will be used in Sections~\ref{SP} and~\ref{VA}.

\begin{lm}
\label{dist}
Let $A_1A_2B_2B_1$ be an ultraparallel trapezoid with $A_1, A_2 \in \H^2$ and $\alpha_{21}=\pi/2$. Then $$\sinh(r_1)=\sinh(r_2)\cosh(l_{12}),$$ $$\tanh(a_{12})=\frac{\tanh(l_{12})}{\cosh(r_2)}.$$
\end{lm}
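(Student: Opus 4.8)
The plan is to recognize $A_1A_2B_2B_1$ as a \emph{Lambert quadrilateral} and reduce it to two right-angled hyperbolic triangles. Since the projection defining the trapezoid is orthogonal, the angles at $B_1$ and $B_2$ equal $\pi/2$, and by hypothesis $\alpha_{21}=\pi/2$ is the angle at $A_2$; hence $A_1A_2B_2B_1$ has right angles at $A_2$, $B_2$, $B_1$ and its only non-right angle sits at $A_1$. Because $A_1,A_2\in\H^2$ are ordinary points, all four edge lengths are finite and the triangles that appear are genuine.

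First I would draw the diagonal $A_1B_2$, of some length $h$. By convexity it lies inside the quadrilateral and splits it into the right triangle $A_1B_1B_2$, with right angle at $B_1$, legs $r_1=|A_1B_1|$ and $a_{12}=|B_1B_2|$ and hypotenuse $h$, and the right triangle $A_1A_2B_2$, with right angle at $A_2$, legs $l_{12}=|A_1A_2|$ and $r_2=|A_2B_2|$ and the same hypotenuse $h$. The crucial observation is that the right angle of the quadrilateral at $B_2$ is divided by the diagonal into $\psi_1=\angle A_1B_2B_1$ and $\psi_2=\angle A_1B_2A_2$, so that $\psi_1+\psi_2=\pi/2$; consequently $\sin\psi_2=\cos\psi_1$ and $\cos\psi_2=\sin\psi_1$.

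Then I would apply the standard trigonometry of right hyperbolic triangles: the hyperbolic Pythagoras relation $\cosh h=\cosh(\cdot)\cosh(\cdot)$, together with $\sin(\text{angle})=\sinh(\text{opposite leg})/\sinh h$ and $\cos(\text{angle})=\tanh(\text{adjacent leg})/\tanh h$. From $A_1B_1B_2$ this gives $\cos\psi_1=\tanh(a_{12})/\tanh h$ and $\sin\psi_1=\sinh(r_1)/\sinh h$; from $A_1A_2B_2$ it gives $\sin\psi_2=\sinh(l_{12})/\sinh h$, $\cos\psi_2=\tanh(r_2)/\tanh h$ and $\cosh h=\cosh(l_{12})\cosh(r_2)$. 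Equating $\sin\psi_2=\cos\psi_1$ and clearing $\tanh h=\sinh h/\cosh h$ yields $\sinh(l_{12})=\tanh(a_{12})\cosh h$; substituting $\cosh h=\cosh(l_{12})\cosh(r_2)$ produces the second claimed identity $\tanh(a_{12})=\tanh(l_{12})/\cosh(r_2)$. Equating $\sin\psi_1=\cos\psi_2$ gives $\sinh(r_1)=\sinh(r_2)\cosh h/\cosh(r_2)$, and the same substitution collapses this to the first identity $\sinh(r_1)=\sinh(r_2)\cosh(l_{12})$.

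There is no deep obstacle here; the content is elementary hyperbolic trigonometry. The only points demanding care are the bookkeeping — correctly pairing each of $\psi_1,\psi_2$ with its opposite and adjacent leg in the two triangles — and the justification that the diagonal $A_1B_2$ lies inside the convex quadrilateral, which is what makes the splitting $\psi_1+\psi_2=\pi/2$ legitimate. As an alternative consistent with the paper's later computations, one could work intrinsically in the hyperboloid model, writing the space-like unit normals of the four bounding lines and evaluating their pairwise scalar products through Lemma~\ref{scpr}; I would nevertheless keep the decomposition into right triangles as the primary argument, as it is the most transparent.
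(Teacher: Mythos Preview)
Your proof is correct: the quadrilateral is indeed a Lambert quadrilateral, the diagonal $A_1B_2$ splits it into two genuine right-angled hyperbolic triangles, and the bookkeeping with $\psi_1+\psi_2=\pi/2$ together with the standard right-triangle identities yields both formulas exactly as you wrote.

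The paper does not actually prove this lemma; it simply cites Buser's book (\cite{Bus}, Theorem~2.3.1, formulas (iv) and (v)), where these are listed among the standard identities for trirectangles. So your argument is not a different route so much as a self-contained derivation of a fact the paper imports wholesale. What you gain is independence from the external reference and a transparent mechanism (the diagonal decomposition) that the reader can check on the spot; what the citation buys is brevity and a pointer to the broader context of Lambert-quadrilateral trigonometry. Either is perfectly adequate here, since the lemma is used only as input to the semi-ideal version in Lemma~\ref{distideal}.
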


The proof can be found in~\cite{Bus}, Theorem 2.3.1, Formulas (v) and (iv). We need to prove its analogue with one ideal vertex. It will be used further in this subsection to obtain some formulas necessary for Sections~\ref{SP} and~\ref{VA}.

\begin{lm}
\label{distideal}
Let $A_1A_2B_2B_1$ be an ultraparallel trapezoid with $A_1 \in \partial_{\infty} \H^2$, $A_2 \in \H^2$  and $\alpha_{21}=\pi/2$. Then $$e^{r_1}=\sinh(r_2)e^{l_{12}},$$ $$\tanh(a_{12})=\frac{1}{\cosh(r_2)}.$$
\end{lm}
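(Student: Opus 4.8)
The plan is to obtain the formulas for the semi-ideal trapezoid as a limiting case of Lemma~\ref{dist}, by degenerating the upper vertex $A_1$ to an ideal point. First I would set up the ultraparallel trapezoid $A_1A_2B_2B_1$ with $A_2 \in \H^2$ and the right angle $\alpha_{21} = \pi/2$ at $A_2$, exactly as in Lemma~\ref{dist}, but keep $A_1$ finite for the moment. Lemma~\ref{dist} then gives $\sinh(r_1) = \sinh(r_2)\cosh(l_{12})$ and $\tanh(a_{12}) = \tanh(l_{12})/\cosh(r_2)$. The idea is to let $A_1 \to \partial_\infty \H^2$ along the line $A_1A_2$, so that $l_{12} \to \infty$, while tracking what happens to $r_1$ under the convention that the length to an ideal vertex is measured to its canonical horocycle.

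The main point is the bookkeeping of the signed length $r_1$ once $A_1$ becomes ideal. As $A_1$ recedes to infinity, both $\cosh(l_{12})$ and $\sinh(r_1)$ blow up, and the honest statement is that the quantity $\sinh(r_1)$ is being replaced by a normalized version measuring distance to the horocycle. Concretely, I would use Lemma~\ref{scpr} to express everything in the hyperboloid model: write $\overline l_{A_1}$ for the polar vector of the canonical horocycle at $A_1$, and compute $\langle \overline x_{B_1}, \overline l_{A_1}\rangle = -e^{r_1}$ directly. Comparing the asymptotics, the combination $\sinh(r_2)\cosh(l_{12})$ with the exponential growth $e^{l_{12}}$ of $\cosh(l_{12})$ collapses, after the horocyclic normalization, to $e^{r_1} = \sinh(r_2)\,e^{l_{12}}$; the factor $\cosh \to \tfrac12 e^{l_{12}}$ is absorbed into the choice of canonical horocycle. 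For the second formula, $\tanh(l_{12}) \to 1$ as $l_{12}\to\infty$, so $\tanh(a_{12}) = \tanh(l_{12})/\cosh(r_2) \to 1/\cosh(r_2)$ immediately, with no normalization subtlety since $a_{12}$ stays finite (the lower vertices $B_1, B_2$ remain non-ideal).

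The cleaner route, which I would probably adopt to avoid hand-waving about limits, is to redo the computation intrinsically in the hyperboloid model rather than taking a limit of Lemma~\ref{dist}. I would place $A_2$, $B_1$, $B_2$ explicitly, take the light-like vector $\overline l_{A_1}$ for the ideal upper vertex, and read off $r_1$, $r_2$, $a_{12}$, $l_{12}$ via the three scalar-product formulas of Lemma~\ref{scpr}. The right angle at $A_2$ and the ultraparallel (perpendicular-foot) condition at $B_1, B_2$ give linear relations among the defining vectors, from which both identities follow by a short computation analogous to the one already carried out in Lemma~\ref{h3}. The expected obstacle is precisely pinning down the correct power of $e^{l_{12}}$ and the sign conventions so that the canonical horocycle normalization matches the stated $e^{r_1} = \sinh(r_2)e^{l_{12}}$; once the polar vector $\overline l_{A_1}$ is fixed by its defining equation, this becomes a routine check. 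The second identity $\tanh(a_{12}) = 1/\cosh(r_2)$ should then drop out with essentially no extra work, being insensitive to the decoration.
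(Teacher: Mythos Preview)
Your proposal is correct, and the limiting idea is exactly what the paper uses. The paper's implementation differs from your first approach in one clean detail: rather than sending the vertex $A_1$ to infinity (which, as you rightly note, forces you to track a moving horocycle normalization), the paper keeps $A_1$ ideal from the outset, picks an auxiliary finite point $A$ on the segment $A_1B_1$ inside the horodisk, and applies Lemma~\ref{dist} to the finite trapezoid $AA_2B_2B_1$. Since the horocycle at $A_1$ is fixed throughout, the signed length $r_1$ has an unambiguous meaning, and the limit $A\to A_1$ is just a matter of elementary asymptotics $\sinh,\cosh\sim\tfrac12 e^{(\cdot)}$ with no bookkeeping of decorations. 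Your direct hyperboloid-model computation would work equally well and is closer in spirit to the proof of Lemma~\ref{h3}, but the paper does not take that route here; its argument is purely the limit of the non-ideal formula. Your derivation of the second identity via $\tanh(l_{12})\to 1$ is exactly right and matches the paper's ``obtained similarly''.
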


\begin{proof}
Consider the point $A\in A_1B_1$ inside the horodisk at $A_1$. Let $l$ be the length $AA_2$ and $r$ be the length $AB_1$. Then from Lemma~\ref{dist} we have $$\sinh(r_2)=\frac{\sinh(r)}{\cosh(l)}.$$

Now let $r'$ be the modulo of the length $AA_1$, hence $r=r_1+r'$. Extend (if necessarily) $AA_2$ to the intersection point $A'$ with the horocycle at $A_1$, let $l'$ be the modulo of the length $AA'$, $l''$ be the length $A_1A'$ taken with the minus sign if $A_1$ is inside the horodisk, then $l=l''+l'$. Move the point $A$ to $A_1$ and consider the limit of the expression: 
$$\sinh(r_2)=\lim_{A\rightarrow A_1}\frac{\sinh(r_1)\cosh(r')+\cosh(r_1)\sinh(r')}{\cosh(l')\cosh(l'')+\sinh(l')\sinh(l'')}=$$ $$=\lim_{A\rightarrow A_1}\frac{(\sinh(r_1)+\cosh(r_1))e^{r'}}{(\cosh(l'')+\sinh(l''))e^{l'}}=e^{r_1-l_{12}} .$$
%We know that $\frac{2\sinh(r')}{e^{r'}}$, $\frac{2\cosh(r')}{e^{r'}}$, $\frac{2\sinh(l')}{e^{l'}}$, $\frac{2\cosh(l')}{e^{l'}}$ tend to 1, $r'-l'$ tends to 0 and $l''$ tend to $l_{12}$ respectively. This gives the first formula. The second one is obtained similarly.
This is because $r'-l'$ tends to zero and $l''$ tends to $l_{12}$.

The second formula is obtained similarly.
\end{proof}

%The first formula of Lemma~\ref{dist} will play the key role in Section~\ref{SP}. The others imply several helpful corollaries. 
The first two corollaries will be used in Section~\ref{PC} to justify the definition of a prismatic complex and in the proof of Lemma~\ref{inj}:

\begin{crl}
In an ultraparallel trapezoid $A_1A_2B_1B_2$ the length of the lower edge is uniquely determined by the lengths of the upper edge and the lateral edges.
\end{crl}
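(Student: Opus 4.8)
The plan is to deduce the required relation from Lemma~\ref{dist} by cutting the trapezoid along the common perpendicular of its two ultraparallel lines. Since $A_1A_2$ and $B_1B_2$ are ultraparallel, they admit a unique common perpendicular; denote its length by $\rho_{12}$ and its feet on the upper and lower lines by $Q$ and $P$. The geodesic $QP$ divides $A_1A_2B_2B_1$ into two quadrilaterals $A_iQPB_i$, $i=1,2$. Each has a right angle at $Q$ and at $P$ (because $QP$ is perpendicular to both lines) and a right angle at $B_i$ (because $B_i$ is the orthogonal projection of $A_i$ to the lower line). Hence each $A_iQPB_i$ is an ultraparallel trapezoid with a right angle at an upper vertex, so Lemma~\ref{dist} applies to it, with $QP$ playing the role of the right-angled lateral edge.

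Writing $\sigma_i$ for the length of $QA_i$ and $\tau_i$ for the length of $PB_i$, taken with signs according to the side of $QP$ on which $A_i$ (equivalently $B_i$) lies, Lemma~\ref{dist} gives
\begin{equation}
\label{eq:half}
\sinh(r_i)=\sinh(\rho_{12})\cosh(\sigma_i), \qquad \tanh(\tau_i)=\frac{\tanh(\sigma_i)}{\cosh(\rho_{12})}, \qquad i=1,2,
\end{equation}
where the evenness of $\cosh$ and the oddness of $\tanh$ make these valid for the signed quantities, while $l_{12}=|\sigma_1-\sigma_2|$ and $a_{12}=|\tau_1-\tau_2|$. Eliminating $\sigma_1,\sigma_2$ and $\rho_{12}$ from the four equations in~\eqref{eq:half} by routine hyperbolic trigonometry, I expect to arrive at the single clean relation
\begin{equation}
\label{eq:trap}
\cosh(l_{12})=\cosh(a_{12})\cosh(r_1)\cosh(r_2)-\sinh(r_1)\sinh(r_2).
\end{equation}

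From~\eqref{eq:trap} the corollary is immediate: for fixed $r_1,r_2$ the right-hand side is a strictly increasing affine function of $\cosh(a_{12})$ with positive leading coefficient $\cosh(r_1)\cosh(r_2)$, so $\cosh(a_{12})$, and hence $a_{12}\geq 0$, is uniquely recovered from $\cosh(l_{12})$. I expect the main obstacle to be not the trigonometry but the sign bookkeeping in the reduction: one must check that the foot $Q$ lies between $A_1$ and $A_2$ precisely when $r_1=r_2$, that otherwise it lies beyond the vertex carrying the shorter lateral edge, and that $\sigma_i$ and $\tau_i$ always share a sign, since only with the correct signs do the two applications of~\eqref{eq:half} combine into~\eqref{eq:trap}. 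As a cross-check, one can obtain~\eqref{eq:trap} directly in the upper half-plane by placing the lower line on a geodesic, erecting the perpendiculars of lengths $r_1,r_2$ at $B_1,B_2$, and computing $\cosh(l_{12})$ from the resulting coordinates of $A_1,A_2$; this confirms the formula without any sign analysis and shows, in particular, that $l_{12}\to\infty$ as $a_{12}\to\infty$, so that every admissible upper length is attained exactly once.
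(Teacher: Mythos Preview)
Your decomposition is exactly the one the paper uses: take the foot of the common perpendicular on the upper line (the paper calls it $A$, you call it $Q$), drop to the lower line, and apply the right-angled trapezoid lemma to each half. The paper stops there --- once each half is determined by $(r_i,\rho_{12},\sigma_i)$ and the $\sigma_i$ are recovered from $r_i$ and $\rho_{12}$, while $\rho_{12}$ is pinned down by $|\sigma_1|+|\sigma_2|=l_{12}$ (or $|\sigma_1-\sigma_2|=l_{12}$), uniqueness of $a_{12}$ follows without writing a closed formula. Your explicit relation~\eqref{eq:trap} is correct and is a pleasant bonus, but it is not needed for the corollary.

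There is one genuine omission. In this paper a trapezoid may have ideal upper vertices (see the definition just before Lemma~\ref{dist}), and the applications in Section~\ref{PC} use precisely the semi-ideal case. Lemma~\ref{dist} requires $A_1,A_2\in\H^2$; for an ideal $A_i$ the paper invokes Lemma~\ref{distideal} instead, where $\sinh(r_i)$ is replaced by $e^{r_i}$. Your equations~\eqref{eq:half} and the formula~\eqref{eq:trap} are written only for the non-ideal case, so as stated your argument does not cover semi-ideal trapezoids. The fix is immediate --- apply Lemma~\ref{distideal} on the half with an ideal vertex --- but it should be said.

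One smaller point: your anticipated check ``$Q$ lies between $A_1$ and $A_2$ precisely when $r_1=r_2$'' is not right. From $\sinh(r_i)=\sinh(\rho_{12})\cosh(\sigma_i)$ the condition $r_1=r_2$ forces $|\sigma_1|=|\sigma_2|$, i.e.\ $Q$ is the \emph{midpoint}; but $Q$ lies strictly between $A_1$ and $A_2$ for a whole range of unequal $r_1,r_2$. What you actually need (and what your signed conventions already encode) is just that $\sigma_1,\sigma_2$ carry opposite signs when $Q$ is interior and the same sign otherwise, and that $\tau_i$ inherits the sign of $\sigma_i$ from the second identity in~\eqref{eq:half}. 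With that bookkeeping the reduction to~\eqref{eq:trap} goes through; your half-plane cross-check is a good independent verification.
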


\begin{proof}
Consider $A\in A_1A_2$ that is the closest point to the line $B_1B_2$. Let $B$ be its orthogonal projection to $B_1B_2$. Apply Lemma~\ref{dist} or Lemma~\ref{distideal} to the trapezoids $AA_1B_1B$ and $AA_2B_2B$.
\end{proof}

\begin{crl}
\label{existence}
An ultraparallel trapezoid or an ultraparallel prism is determined up to isometry (mapping canonical horocycles/horospheres, if any, to canonical horocycles/horospheres) by the lengths of the upper edges and the lateral edges. 
\end{crl}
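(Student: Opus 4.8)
The plan is to prove this uniqueness statement by a normalization (placement) argument: I reduce the prism to its lateral faces, each of which is a trapezoid, and reduce the trapezoid to the preceding Corollary (lower edge determined by upper and lateral edges) combined with the rigidity of a suitably pinned configuration.

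First I would treat the trapezoid. Suppose two ultraparallel trapezoids share the same upper length $l_{12}$ and lateral lengths $r_1,r_2$. By the preceding Corollary they then share the same lower length $a_{12}$. Since $B_i$ is the orthogonal projection of $A_i$ onto the lower line, each lateral edge $A_iB_i$ is perpendicular to that line. Fix once and for all a geodesic $\ell\subset\H^2$, a point $O\in\ell$, a direction along $\ell$, and a side of $\ell$; the group $\mathrm{Iso}(\H^2)$ acts simply transitively on such flags. I move each trapezoid by an isometry so that its lower line becomes $\ell$, with $B_1=O$, with $B_2$ in the chosen direction, and with the upper vertices on the chosen side. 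After this normalization $A_1$ is the (possibly ideal) point on the perpendicular to $\ell$ at $O$, on the chosen side, at signed distance $r_1$; then $B_2$ is the point of $\ell$ at distance $a_{12}$ from $O$ in the chosen direction; and $A_2$ is on the perpendicular to $\ell$ at $B_2$ at signed distance $r_2$. All four vertices are thus determined by the common data, so the two normalized trapezoids coincide, which produces the desired isometry. When a vertex is ideal, $r_i$ is the signed distance to the \emph{canonical} horocycle, which is defined intrinsically and hence carried to the canonical horocycle by any isometry; so the congruence automatically respects decorations.

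For the prism I would reduce to the previous case face by face. Each lateral face $A_iA_jB_jB_i$ is a trapezoid whose lateral edges $A_iB_i,A_jB_j$ are orthogonal to the base plane $\Pi$ (the dihedral angles along $B_iB_j$ are $\pi/2$); hence $B_i$ is the foot of $A_i$, and the ultraparallelism of the upper plane and $\Pi$ passes to the face, so the face is an ultraparallel trapezoid with upper edge $l_{ij}$ and lateral edges $r_i,r_j$. By the preceding Corollary each lower edge length $a_{ij}$ is determined by the given data, so the base triangle $B_1B_2B_3$ has all three side lengths determined and is therefore fixed up to an isometry of $\Pi$ by the hyperbolic side-side-side congruence. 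I then pin a plane $\Pi\subset\H^3$, a labelled isometric copy of the base triangle inside it, and a side of $\Pi$; the stabilizer of such a configuration in $\mathrm{Iso}(\H^3)$ is trivial, since an isometry fixing $\Pi$ pointwise is the identity or the reflection in $\Pi$, and fixing the chosen side excludes the reflection. After moving both prisms into this normal position their bases coincide, and each apex $A_i$ is the point at signed distance $r_i$ on the perpendicular to $\Pi$ at $B_i$ on the chosen side, hence determined; the prisms coincide, and the intrinsic canonical horospheres match as before.

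The genuinely delicate points are the two rigidity claims — that pinning $(\ell, O, \text{direction}, \text{side})$, respectively $(\Pi, \text{placed triangle}, \text{side})$, leaves no nontrivial isometry — and the verification that a lateral face of an ultraparallel prism is honestly an ultraparallel trapezoid of the kind governed by Lemma~\ref{dist} and Lemma~\ref{distideal}. One must also keep careful track of the signed-distance convention at ideal vertices, so that the hypothesized equalities of the $r_i$ really are the correct geometric inputs. Everything else is a direct consequence of the preceding Corollary and the explicit relations in Lemma~\ref{dist} and Lemma~\ref{distideal}.
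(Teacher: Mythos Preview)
Your argument is correct and is precisely the natural way to turn the preceding corollary (lower edge determined by upper and lateral edges) into this rigidity statement; the paper itself gives no proof and simply records it as an immediate corollary, so your normalization/SSS argument is the intended elaboration. The one step worth keeping is your remark that ultraparallelism of the upper and lower planes passes to each lateral face (because the boundary circles of two ultraparallel planes are disjoint, so the lines $A_iA_j$ and $B_iB_j$ cannot even share an ideal point), which justifies applying the trapezoid corollary to the lateral faces.
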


Next one is crucial for Subsection~\ref{VA2}:

\begin{crl}
\label{trapdist}
For a semi-ideal ultraparallel trapezoid we have $$\cosh(a_{12})= 1+\frac{2}{\sinh^2(\rho_{12})}.$$
\end{crl}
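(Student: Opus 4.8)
The plan is to reduce the semi-ideal case to the one-ideal-vertex formula of Lemma~\ref{distideal} by cutting the trapezoid along the common perpendicular of its two parallel lines. Write $g$ for the complete geodesic through the ideal vertices $A_1, A_2$ and $\ell$ for the line $B_1B_2$. Since the trapezoid is ultraparallel, $g$ and $\ell$ admit a unique common perpendicular; denote by $P \in g$ and $Q \in \ell$ its feet, so that the length of $PQ$ equals $\rho_{12}$ and $PQ$ meets both $g$ and $\ell$ at right angles.

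First I would observe that $P$ and $Q$ split the trapezoid into two quadrilaterals $A_1 P Q B_1$ and $A_2 P Q B_2$, each of which is again an ultraparallel trapezoid: its upper edge is the sub-segment $A_iP$ of $g$ (with $A_i$ ideal and $P \in \H^2$), its lower edge $B_i Q$ lies on $\ell$, and its lateral edges are $A_iB_i$ and $PQ$. Because $PQ \perp g$ at $P$, the angle of each half at the non-ideal upper vertex $P$ equals $\pi/2$. Hence each half is exactly of the type covered by Lemma~\ref{distideal}, with $P$ playing the role of the non-ideal vertex, its projection $Q$ the role of the corresponding lower vertex, and the lateral edge $PQ$ of length $\rho_{12}$ the role of $r_2$. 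Applying Lemma~\ref{distideal} to both halves therefore yields $\tanh(|B_1Q|) = \tanh(|B_2Q|) = 1/\cosh(\rho_{12})$.

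Since $Q$ lies between $B_1$ and $B_2$ on $\ell$, we have $a_{12} = |B_1 Q| + |Q B_2|$, and the two equal summands give $\tanh(a_{12}/2) = 1/\cosh(\rho_{12})$. It then remains to convert this into the claimed identity using the half-angle formula $\cosh(x) = (1 + \tanh^2(x/2))/(1 - \tanh^2(x/2))$: substituting $\tanh^2(a_{12}/2) = 1/\cosh^2(\rho_{12})$ and simplifying with $\cosh^2(\rho_{12}) - 1 = \sinh^2(\rho_{12})$ gives $\cosh(a_{12}) = (\cosh^2(\rho_{12}) + 1)/\sinh^2(\rho_{12}) = 1 + 2/\sinh^2(\rho_{12})$.

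The computational steps are routine; the only real point requiring care is the reduction itself — checking that each half genuinely satisfies the hypotheses of Lemma~\ref{distideal}, in particular that the relevant right angle sits at the non-ideal vertex $P$ and that the lateral edge feeding into the formula is the common perpendicular $PQ$. The symmetry of the configuration under reflection in the common perpendicular (which swaps $A_1 \leftrightarrow A_2$ and $B_1 \leftrightarrow B_2$ while fixing $P$ and $Q$) makes the splitting transparent, forces $|B_1Q|=|B_2Q|$, and simultaneously justifies the betweenness of $Q$.
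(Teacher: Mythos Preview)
Your proof is correct and follows essentially the same route as the paper: cut along the common perpendicular (the paper calls its feet $A$ and $B$ rather than your $P$ and $Q$), apply the second formula of Lemma~\ref{distideal} to a half to get $\tanh(a_{12}/2)=1/\cosh(\rho_{12})$, then use the half-angle identity for $\cosh$. Your version is slightly more explicit about why the two halves are congruent (via the reflection in the common perpendicular), which the paper leaves implicit when it jumps directly to $a_{12}/2$.
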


\begin{proof}
Let $A\in A_1A_2$ be the closest point to the line $B_1B_2$ and $B$ its projection to this line. Apply the second formula of Lemma~\ref{distideal} to the trapezoid $AA_1B_1B$ and get $\tanh\left(a_{12}/2\right)=\frac{1}{\cosh(\rho_{12})}.$ Then use the formula $$\cosh(a_{12})=\frac{1+\tanh^2\left(a_{12}/2\right)}{1-\tanh^2\left(a_{12}/2\right)}$$ and obtain the desired.
\end{proof}

Using the first formula of Lemma~\ref{distideal} and Corollary~\ref{trapdist} it is straightforward to derive another one, which is necessary for the proof of Lemma~\ref{equiv}:

\begin{crl}
\label{trap} 
For a semi-ideal ultraparallel trapezoid we have
$$\cosh(a_{12})=1+2e^{l_{12}-r_1-r_2}.$$
\end{crl}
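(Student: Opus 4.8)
The plan is to combine the two formulas that have just been established. Corollary~\ref{trapdist} gives $\cosh(a_{12}) = 1 + 2/\sinh^2(\rho_{12})$, relating the lower edge length to the distance $\rho_{12}$ between the two lines, while the first formula of Lemma~\ref{distideal} relates the lateral edge lengths $r_1, r_2$ and the upper edge length $l_{12}$ to the geometry of the trapezoid. The goal is to eliminate the auxiliary quantity $\rho_{12}$ (equivalently $\sinh(\rho_{12})$) in favor of $l_{12}, r_1, r_2$, arriving at $\cosh(a_{12}) = 1 + 2e^{l_{12}-r_1-r_2}$.

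First I would set up the same decomposition used in the proof of Corollary~\ref{trapdist}: let $A \in A_1A_2$ be the closest point to the line $B_1B_2$, with orthogonal projection $B$ onto it, so that $AB = \rho_{12}$. This splits the trapezoid into two right-angled trapezoids $AA_1B_1B$ and $AA_2B_2B$, each having a right angle at the upper vertex $A$ (since $A$ realizes the minimal distance, the segment $AB$ is orthogonal to $A_1A_2$). Both $A_1$ and $A_2$ are ideal, so each sub-trapezoid is semi-ideal with $\alpha = \pi/2$ at $A$, and Lemma~\ref{distideal} applies to each. Denoting by $l_1$ and $l_2$ the lengths of the two pieces $AA_1$ and $AA_2$ of the upper edge (so $l_1 + l_2 = l_{12}$), the first formula of Lemma~\ref{distideal} gives $e^{r_1} = \sinh(\rho_{12})\,e^{l_1}$ and $e^{r_2} = \sinh(\rho_{12})\,e^{l_2}$.

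Multiplying these two relations yields $e^{r_1+r_2} = \sinh^2(\rho_{12})\,e^{l_1+l_2} = \sinh^2(\rho_{12})\,e^{l_{12}}$, whence
$$\frac{2}{\sinh^2(\rho_{12})} = 2\,e^{l_{12}-r_1-r_2}.$$
Substituting this into the expression from Corollary~\ref{trapdist} gives exactly $\cosh(a_{12}) = 1 + 2e^{l_{12}-r_1-r_2}$, as claimed.

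The computation itself is routine; the only point requiring a little care is the bookkeeping of signs and the decomposition of the upper edge. I must make sure that $A$ lies between $A_1$ and $A_2$ so that the length additivity $l_1 + l_2 = l_{12}$ holds, and that the signed-distance conventions for the canonical horocycles at the ideal vertices are applied consistently when invoking Lemma~\ref{distideal} on each half. Given that Corollary~\ref{trapdist} was obtained by an entirely analogous splitting, these conventions are already in force, so the main obstacle is merely verifying that applying the first formula of Lemma~\ref{distideal} (rather than the second) to each half-trapezoid and then multiplying cleanly eliminates $\rho_{12}$ — which it does.
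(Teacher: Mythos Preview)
Your proof is correct and follows exactly the route the paper indicates: apply the first formula of Lemma~\ref{distideal} to each half of the trapezoid split at the foot of the common perpendicular, multiply to obtain $\sinh^2(\rho_{12})=e^{r_1+r_2-l_{12}}$, and substitute into Corollary~\ref{trapdist}. The paper merely calls this derivation ``straightforward'' without spelling out the details you provide; note also that your concern about $A$ lying between $A_1$ and $A_2$ is automatic here, since both upper vertices are ideal and hence any finite point of the line $A_1A_2$ lies between them.
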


From this and Lemma \ref{h3} we also deduce the key fact used in Section~\ref{VA}:

\begin{crl}
\label{alpha}
For a semi-ideal ultraparallel trapezoid we have $$\alpha^2_{12}=e^{r_2-r_1-l_{12}} + e^{-2r_1}.$$
\end{crl}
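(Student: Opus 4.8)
The plan is to cut the trapezoid by a diagonal into a triangle that carries the ideal vertex $A_1$ together with both of its adjacent edges, plus one right triangle, and then to combine Lemma~\ref{h3} with Corollary~\ref{trap}. Concretely, I would draw the diagonal $A_2B_1$, splitting the semi-ideal ultraparallel trapezoid $A_1A_2B_2B_1$ into the triangle $A_1A_2B_1$, which has the two ideal vertices $A_1,A_2$ and the finite vertex $B_1$, and the triangle $A_2B_1B_2$, which has a right angle at $B_2$ because $B_2$ is the foot of the perpendicular dropped from $A_2$. Both edges incident to $A_1$ in the trapezoid, namely $A_1A_2$ and $A_1B_1$, are exactly the edges incident to $A_1$ in the triangle $A_1A_2B_1$, and convexity guarantees that the diagonal $A_2B_1$ separates $A_1$ from $B_2$. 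Hence the arc of the canonical horocycle at $A_1$ cut off inside the trapezoid coincides with the one cut off inside $A_1A_2B_1$, so $\alpha_{12}$ is precisely the horocyclic length at $A_1$ in that triangle, and I may apply Lemma~\ref{h3} to it.

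Writing $q:=\len(A_2B_1)$ for the diagonal and using the side–labelling of Lemma~\ref{h3} with $A=A_1$, $B=A_2$, $C=B_1$ (so the side opposite $A_1$ is $A_2B_1$ of length $q$, the side opposite $A_2$ is $A_1B_1$ of length $r_1$, and the side opposite $B_1$ is $A_1A_2$ of length $l_{12}$), Lemma~\ref{h3} gives
$$\alpha_{12}^2=e^{q-r_1-l_{12}}-e^{-2r_1}.$$
It then remains to evaluate $e^{q}$, i.e.\ the exponential of the signed distance from $B_1$ to the canonical horocycle at $A_2$. I would do this in the hyperboloid model: take $\overline x_{B_2}$ and a unit space-like vector $\overline u$ tangent to $B_2B_1$ at $B_2$, so that $\overline x_{B_1}=\cosh(a_{12})\,\overline x_{B_2}+\sinh(a_{12})\,\overline u$; the ideal endpoint $A_2$ of the ray from $B_2$ orthogonal to $B_2B_1$ then corresponds to a light-like vector of the form $\overline x_{B_2}+\overline v$, with $\overline v$ the space-like unit vector at $B_2$ orthogonal to $\overline u$, so $\overline l_{A_2}$ is a positive multiple of it. The scale is fixed by $\langle \overline x_{B_2},\overline l_{A_2}\rangle=-e^{r_2}$ from Lemma~\ref{scpr}(1), and the same lemma applied to $B_1$ yields $\langle \overline x_{B_1},\overline l_{A_2}\rangle=-e^{r_2}\cosh(a_{12})$, that is
$$e^{q}=e^{r_2}\cosh(a_{12}).$$

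Finally I would substitute Corollary~\ref{trap}, namely $\cosh(a_{12})=1+2e^{l_{12}-r_1-r_2}$, which gives $e^{q}=e^{r_2}+2e^{l_{12}-r_1}$. Plugging this into the displayed formula for $\alpha_{12}^2$, the first term becomes $e^{r_2-r_1-l_{12}}+2e^{-2r_1}$, and combining the produced $2e^{-2r_1}$ with the $-e^{-2r_1}$ from Lemma~\ref{h3} leaves exactly $e^{r_2-r_1-l_{12}}+e^{-2r_1}$, as claimed.

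The one genuinely delicate point — the main obstacle — is the bookkeeping of the canonical horocycles. The quantities $r_2$ and $q$ must both be measured against one and the same horocycle at $A_2$, since this is precisely what makes $e^{q}=e^{r_2}\cosh(a_{12})$ hold with the correct normalization; likewise the horocycle at $A_1$ entering $\alpha_{12}$, $r_1$ and $l_{12}$ must be the one implicit in Lemma~\ref{h3}. Because all of these data are read off the single fixed decoration of the trapezoid, the normalizations line up and the signs come out right, but this consistency is the thing that has to be checked rather than assumed.
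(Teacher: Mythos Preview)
Your proof is correct and follows essentially the same route as the paper, which proves the corollary in a single sentence (``From this and Lemma~\ref{h3}'') by combining Lemma~\ref{h3} with Corollary~\ref{trap}. You have made the implicit step explicit: after cutting along $A_2B_1$, one needs the diagonal length $q$, and your hyperboloid computation $e^{q}=e^{r_2}\cosh(a_{12})$ (equivalently, the distance formula in the right-angled semi-ideal triangle $A_2B_2B_1$) is exactly the missing link the paper leaves to the reader.
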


%\begin{dfn}
%A \emph {rectangular prism} is the convex hull of six points $A_1$, $A_2$, $A_3$, $B_1$, $B_2$ and $B_3$ in $\overline \H^3$ such that the points $B_1$, $B_2$ and $B_3 \in \H^3$, the points $A_1$, $A_2$ and $A_3$ are noncollinear and are in the same half-space with respect to the plane $B_1B_2B_3$, the lines $A_1B_1$, $A_2B_2$ and $A_3B_3$ are orthogonal to the plane $B_1B_2B_3$.
%\end{dfn}

%In this case our definition is not correct because the plane $B_1B_2B_3$ is not defined. Instead, in this case we require the lines  $A_1B_1$, $A_2B_2$ and $A_3B_3$ to be orthogonal to the line $B_1B_2B_3$. Hence, all six points are coplanar. We require additionally that $A_1$, $A_2$ and $A_3$ are in the same half-plane with respect to the line $B_1B_2B_3$.

%\begin{dfn}
%A \emph{ singular rectangular prism} is the convex hull of six points $A_1$, $A_2$, $A_3$, $B_1$, $B_2$ and $B_3$ in hyperbolic three-dimensional space such that the points $B_1$, $B_2$ and $B_3$ are finite and collinear and the lines $A_1B_1$, $A_2B_2$ and $A_3B_3$ are orthogonal to the line $B_1B_2B_3$. The last condition means that in this case all six points are coplanar. We also require that points $A_1$, $A_2$ and $A_3$ are in the same half-plane with respect to the line $B_1B_2B_3$.
%\end{dfn}

\section{Prismatic complexes} \label{PC}

Let $(S_{g,n}, d)$ be a hyperbolic cusp-surface with $n$ cusps and $T$ be an ideal geodesic triangulation of $S_{g,n}$ with vertices at cusps. By $E(T)$ and $F(T)$ denote its sets of edges and faces respectively. The set of cusps is denoted by $\mathcal A=\{A_1, \ldots, A_n\}$.  We fix an horodisk at each $A_i$ and until the end of paper we will refer to it as to the \emph{canonical horodisk} at $A_i$ and to its boundary as to the \emph{canonical horocycle}. %Then $d$ can be fully described in Penner coordinates: the lengths of the edges of $T$. 
%By $G_i$ we denote the boundary of $J_i$ and we will refer to $G_i$ as to the \emph{canonical horocycle} at $A_i$.

We consider triangulations in a general sense: there may be loops and multiple edges. It is also possible that some triangles have two edges glued together. But without loss of generality, when we consider a particular triangle (or a pair of distinct adjacent triangles), we denote it as $A_iA_jA_h$ (or $A_iA_jA_h$ and $A_jA_hA_g$ respectively).

Suppose that a real weight $r_i$ is assigned to every cusp $A_i$. Denote the weight vector by ${\bf r} \in \R^n$.

\begin{dfn}
A pair $(T, {\bf r})$ is called \emph{admissible} if for every decorated ideal triangle $A_iA_jA_h \in F(T)$ there exists a semi-ideal prism with the lengths of  lateral edges $A_iB_i$, $A_jB_j$, $A_hB_h$ equal to $r_i$, $r_j$ and $r_h$.
\end{dfn}

Let $(T, {\bf r})$ be an admissible pair. For each ideal triangle $A_iA_jA_h \in F(T)$ consider a prism from the last definition. By Corollary~\ref{existence} it is unique up to isometry. Canonical horocycles coming from $(S_{g,n}, d)$ detetmine canonical horospheres at each ideal vertex of the prism.

\begin{dfn}
A \emph{prismatic complex} $K(T, {\bf r})$ is a metric space obtained by glying all these prisms via isometries of lateral faces. We choose glying isometries in such a way that canonical horospheres at ideal vertices of prisms match together.
\end{dfn}

For the sake of brevity, in what follows we will write just \emph{complex} instead of \emph{prismatic complex}. 
%We write $K$ instead of $K(T, {\bf r})$ when it does not bring an ambiguity.

If $K(T, {\bf r})$ exists, then it is uniquely determined due to Corollary~\ref{existence}. The solid angle of a prism at an ideal vertex cuts a Euclidean triangle out of the canonical horosphere at this vertex. In a complex $K$ these triangles around an ideal vertex $A_i$ are glued together to form a Euclidean conical polygon, which we call the \emph{canonical horosection} at $A_i$ in $K$.
%According to some other notation prismatic complexes would be also called \emph{(half)-Fuchsian polytopes with particles}.

Every complex is a complete hyperbolic cone manifold with polyhedral boundary. The boundary consists of two components. The union of upper faces forms the {\it upper boundary} coming with a natural isometry to $(S_{g,n}, d)$. This isometry is an important part of the data of $K$. Formally, a complex is not only a metric space, but a pair: metric space plus an isometry of its upper boundary to $(S_{g,n}, d)$. For convenience, in what follows we will just write $(S_{g,n}, d)$ for the upper boundary of $K$. The union of lower faces forms the {\it lower boundary}, which is isometric to $(S_{g, n}, d')$ for a polyhedral hyperbolic metric $d'$ with conical singularities at points $B_i$. We consider $T$ as a geodesic triangulation of both components. The dihedral angle $\tilde{\phi}_e$ of an edge $e \in E(T)$ is the sum of dihedral angles in both prisms containing $e$ and $\tilde \theta_e=\pi-\tilde\phi_e$ is its exterior dihedral angle. (We use tilde in our notation to highlight when we measure angles not in particular prism, but in the whole complex.) The total conical angle $\tilde\omega_i$ of an inner edge $A_iB_i$ is the sum of the corresponding dihedral angles of all prisms containing $A_iB_i$ and $\tilde\kappa_i=2\pi-\tilde\omega_i$ is the curvature of $A_iB_i$. The conical angle of the point $B_i$ in the lower boundary is also equal to $\tilde\omega_i$. 

\begin{dfn}
A complex $K$ is called \emph{convex} if for every upper edge its dihedral angle is at most $\pi$. If $K=K(T, {\bf r})$, then the pair $(T, {\bf r})$ is also called \emph{convex}.
\end{dfn}

%Note that in every prism either the plane containing the upper face intersects the plane containing the lower face, or they are asymptotically parallel, or they are ultraparallel. The following lemma will be important.

Our main aim is to give a variational proof of the following result:

\begin{thm}
\label{angles}
For every cusp metric $d$ on $S_{g, n}$, $g>1, n>0$ with the set of cusps $\mathcal A$ and a function $\kappa': \mathcal A\rightarrow (-\infty; 2\pi)$ satisfying 
%\begin{equation}
%\label{kappa}
$$\sum_{A_i \in \mathcal A}\kappa'(A_i) > 2\pi(2-2g)$$
%\end{equation}
there exists a unique up to isometry convex complex with the upper boundary isometric to $(S_{g,n},d)$ and the curvature $\tilde\kappa_i$ of each edge $A_iB_i$ equal to $\kappa'(A_i)$.
\end{thm}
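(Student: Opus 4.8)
The plan is to fix the cusp metric $d$ and regard the weight vector ${\bf r} \in \R^n$ of lateral edge lengths as the only free parameter, turning the problem into a finite-dimensional variational one. By the injectivity result of Subsection~\ref{secinj} together with the surjectivity statement of Subsection~\ref{secsur}, the convex complexes with upper boundary isometric to $(S_{g,n}, d)$ are in bijection with the points ${\bf r} \in \R^n$: every ${\bf r}$ yields a convex complex $K({\bf r})$ whose combinatorics is the Epstein--Penner decomposition for the decoration ${\bf r}$, and distinct convex complexes have distinct ${\bf r}$. The curvatures $\tilde\kappa_i = \tilde\kappa_i({\bf r})$ thus become functions of ${\bf r}$, and Theorem~\ref{angles} becomes equivalent to the assertion that the system $\tilde\kappa_i({\bf r}) = \kappa'(A_i)$, $i = 1, \ldots, n$, has a unique solution. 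A crucial structural point, which I would emphasize first, is that the canonical horocycles at the cusps -- and hence the upper edge lengths $l_{ij}$ -- stay fixed as ${\bf r}$ varies; only the lower vertices $B_i$ move.

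Next I would introduce the discrete Hilbert--Einstein functional
\[
\mathcal{H}({\bf r}) := 2\vol\big(K({\bf r})\big) + \sum_i r_i\,\tilde\omega_i + \sum_{ij} l_{ij}\,\tilde\phi_{ij},
\]
where $\tilde\omega_i = 2\pi - \tilde\kappa_i$ and $\tilde\phi_{ij}$ is the dihedral angle of the upper edge $A_iA_j$. Applying the Schläfli formula to each semi-ideal prism (the lower edges contribute nothing since their dihedral angle is identically $\pi/2$, and the decoration at the ideal vertices is held fixed) gives $2\,d\vol = -\sum_{ij} l_{ij}\,d\tilde\phi_{ij} - \sum_i r_i\,d\tilde\omega_i$, whence $\partial \mathcal{H}/\partial r_i = \tilde\omega_i$. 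Consequently the modified functional
\[
\mathcal{H}^*({\bf r}) := \mathcal{H}({\bf r}) - \sum_i \big(2\pi - \kappa'(A_i)\big)\, r_i
\]
satisfies $\partial \mathcal{H}^*/\partial r_i = \kappa'(A_i) - \tilde\kappa_i({\bf r})$, so that its critical points are exactly the solutions of the prescribed-curvature system. One must also verify that $\mathcal{H}^*$ is $C^1$ across the walls where the Epstein--Penner combinatorics of $K({\bf r})$ changes; this should hold because the edges exchanged under a flip carry dihedral angle $\pi$ at the transition and therefore contribute trivially.

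Uniqueness would then follow from strict convexity of $\mathcal{H}^*$. Its Hessian is the matrix $\big(\partial\tilde\omega_i/\partial r_j\big)$, which is symmetric because $\mathcal{H}$ is a potential. I would prove positive definiteness locally, prism by prism: using the explicit trapezoid formulas (Corollaries~\ref{trap} and~\ref{alpha}) one computes the $3 \times 3$ symmetric matrix $\big(\partial \omega_i/\partial r_j\big)$ of a single semi-ideal prism and shows it is positive semidefinite with a one-dimensional kernel. Summing these contributions over all prisms and invoking connectedness of the complex then yields a globally positive definite Hessian, hence strict convexity and at most one critical point. This per-prism sign computation is the main algebraic difficulty.

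Finally, existence amounts to showing that the strictly convex $\mathcal{H}^*$ attains its minimum in the interior of $\R^n$, i.e. that it is coercive. By convexity this reduces to checking that along every ray ${\bf r}_0 + t{\bf v}$ the derivative $\frac{d}{dt}\mathcal{H}^* = \sum_i v_i\big(\kappa'(A_i) - \tilde\kappa_i\big)$ becomes positive as $t \to +\infty$. The decisive direction is ${\bf v} = (1, \ldots, 1)$: as all $r_i \to +\infty$, Corollary~\ref{trap} gives $a_{ij} \to 0$, so the lower boundary -- a closed genus-$g$ cone surface -- has area tending to $0$ and, by the discrete Gauss--Bonnet formula, $\sum_i \tilde\kappa_i \to 2\pi(2-2g)$; hence the derivative tends to $\sum_i \kappa'(A_i) - 2\pi(2-2g)$, which is positive precisely by the hypothesis. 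Establishing the analogous estimate along all remaining rays, where various $r_i$ tend to $\pm\infty$ at different rates and the Epstein--Penner combinatorics degenerates, is the main obstacle of the proof and is exactly the ``behavior near infinity'' analysis: controlling the limiting curvatures in each such degeneration and verifying the sign of the directional derivative is where the bulk of the geometric work lies. Once coercivity is secured, strict convexity furnishes a unique minimizer, which is the required convex complex.
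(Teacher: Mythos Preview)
Your overall architecture matches the paper's: parametrize convex complexes by ${\bf r}\in\R^n$, differentiate the discrete Hilbert--Einstein functional via Schl\"afli to get $\partial S/\partial r_i=\tilde\kappa_i$, prove strict concavity (your convexity, after a sign), and then analyze the behavior at infinity to force an interior critical point. Your functional $\mathcal H^*$ equals $-S_{\kappa'}$ up to an additive constant, so the two set-ups are identical.

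The one place where your plan diverges and would not go through as stated is the Hessian step. You propose to show that for a \emph{single} semi-ideal prism the $3\times3$ matrix $(\partial\omega_i/\partial r_j)$ is positive semidefinite with one-dimensional kernel, and then sum. Two problems: first, in an ultraparallel prism the individual upper dihedral angles $\phi$ need not be acute (the foot of the common perpendicular may lie outside the upper triangle), so some of the cotangent weights $\cot\phi$ appearing in the explicit formulas are negative and the per-prism matrix is in general \emph{not} semidefinite. Second, even when it is, there is no kernel---the hyperbolic case differs from the Euclidean cotangent Laplacian precisely by an extra strictly positive term, so the row sums are nonzero. The paper sidesteps both issues by not working prism-by-prism: it groups the contributions from the \emph{two} prisms sharing each upper edge $e$, so the relevant weight is $\cot\phi_{\vec e+}+\cot\phi_{\vec e-}$, which is nonnegative exactly because the complex is convex ($\tilde\phi_e\le\pi$). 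This yields a diagonally dominant Hessian with strict inequality in each row (the strictness uses that faces are simply connected, Lemma~\ref{splcn}), hence negative definiteness of $S$. So the key algebraic input is the global convexity of $K({\bf r})$, invisible at the level of a single prism.

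For coercivity the paper does not analyze all rays; it works on a large cube and treats two cases separately. If some $r_i$ is very negative then $\tilde\omega_i$ is small (Lemma~\ref{inf1}: the horospheric triangles at $A_i$ become thin), giving $\partial S_{\kappa'}/\partial r_i>0$. If instead all $r_j\ge -C_1$ and some $r_i$ is very positive, then the distance function $\rho_K$ is uniformly large (Lemma~\ref{D}), all lower edges are short, and $\sum\tilde\omega_i$ is close to $2\pi(n+2g-2)$ (Corollary~\ref{inf2}); combined with the Gauss--Bonnet hypothesis this forces $\partial S_{\kappa'}/\partial r_i<0$ for some $i$ with $r_i=q$. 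Your directional-derivative scheme along ${\bf v}=(1,\ldots,1)$ recovers only the second case; the first case (some $r_i\to-\infty$) requires its own geometric estimate and is where the constraint $\kappa'_i<2\pi$ is actually used.
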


\begin{prop}
Theorem~\ref{angles} implies Theorem~\ref{fil}.
\end{prop}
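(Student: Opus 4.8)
The plan is to identify ideal Fuchsian polyhedra with convex prismatic complexes all of whose inner edges are nonsingular, and then read off Theorem~\ref{fil} from Theorem~\ref{angles} applied to the zero curvature function. First I would observe that a convex complex $K$ with $\tilde\kappa_i=0$ for every $i$ carries no singularities at all: the cone angle $\tilde\omega_i=2\pi-\tilde\kappa_i$ along each inner edge $A_iB_i$ equals $2\pi$, so $K$ is a genuine hyperbolic manifold with polyhedral boundary and smooth interior. Along every lower edge $B_iB_j$ the two adjacent prisms each contribute a right dihedral angle, so the lower faces meet at angle $\pi$ and the lower boundary is totally geodesic; together with $\tilde\omega_i=2\pi$ at each $B_i$ this makes the lower boundary a closed smooth totally geodesic hyperbolic surface homeomorphic to $S_g$. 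This is the feature that recovers a convex core.

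For the existence part I would apply Theorem~\ref{angles} to $d$ with $\kappa'\equiv 0$. Since $g>1$ we have $0=\sum_{A_i\in\mathcal A}\kappa'(A_i)>2\pi(2-2g)$, so the hypothesis is met and we obtain a convex complex $K$ with upper boundary $(S_{g,n},d)$ and $\tilde\kappa_i=0$ for all $i$. Developing $K$ into $\H^3$, the totally geodesic lower boundary develops to a hyperbolic plane, and the holonomy $\rho\colon G\to{\rm Iso^+}(\H^3)$ preserves it; hence $\rho$ is Fuchsian and $F:=\H^3/\rho(G)$ is a Fuchsian manifold in which this plane projects to the convex core. The developed upper faces, being convex by the convexity condition on $K$, bound the convex hull of the developed ideal vertices lying in one component of $\partial_{\infty}F$; this convex hull is the desired ideal Fuchsian polyhedron $P$, whose upper boundary is isometric to $(S_{g,n},d)$ by construction.

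For uniqueness, given any Fuchsian manifold $F'$ and ideal Fuchsian polyhedron $P'$ realizing $(S_{g,n},d)$, I would refine the face decomposition of the upper boundary of $P'$ to an ideal geodesic triangulation $T$ and drop the orthogonal perpendicular from each ideal vertex $A_i$ onto the convex core plane, with foot $B_i$. These perpendiculars cut $P'$ into semi-ideal ultraparallel prisms and assemble into a convex complex $K'$ with upper boundary $(S_{g,n},d)$; since the perpendiculars meet the smooth convex core orthogonally, every inner edge is nonsingular, so $\tilde\kappa_i=0$. By the uniqueness clause of Theorem~\ref{angles}, $K'$ is isometric to $K$, whence $P'$ is isometric to $P$ and $F'$ to $F$.

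The main obstacle is making this dictionary fully rigorous in both directions. In the existence direction one must check that the developing map of $K$ is an embedding with discrete, faithful Fuchsian holonomy, and that the cusps of the upper boundary develop to genuine ideal vertices, so that the convex hull $P$ is honestly an ideal Fuchsian polyhedron with upper boundary $P$ and lower boundary the convex core. In the uniqueness direction one must verify that dropping perpendiculars from the ideal vertices of an arbitrary $P'$ yields a clean decomposition into admissible (semi-ideal, ultraparallel) prisms with distinct feet $B_i$, and that the canonical horosphere decorations are normalized so that $K'$ and $K$ are genuinely compared within the same framework. Once this equivalence is secured, the existence and uniqueness statements of the two theorems match term by term.
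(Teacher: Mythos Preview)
Your proposal is correct and follows essentially the same route as the paper: apply Theorem~\ref{angles} with $\kappa'\equiv 0$ (checking the Gauss--Bonnet hypothesis via $g>1$), observe that the resulting complex is a nonsingular hyperbolic manifold with totally geodesic lower boundary, embed it in a Fuchsian manifold, and conversely cut any ideal Fuchsian polyhedron into prisms by dropping perpendiculars to the core. The only difference is presentational: where you sketch the developing map and holonomy by hand and flag the discreteness/faithfulness as an obstacle, the paper dispatches this step by citing standard results (Corollary~3.5.3 and Proposition~3.1.3 of Marden) guaranteeing a unique complete hyperbolic $3$-manifold containing $K$ and a discrete faithful representation realizing it.
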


\begin{proof}
By Theorem~\ref{angles}, there exists a complex $K$ with all curvatures $\tilde\kappa_i=0$. We need to show that it is isometric to a Fuchsian polyhedron in a Fuchsian manifold. By Corollary 3.5.3 in~\cite{Mar}, there exists a unique up to isometry complete hyperbolic 3-manifold $F$ without boundary containing $K$ with $\pi_1(F)=\pi_1(K)=\pi_1(S_g)=G$. By Proposition 3.1.3 in~\cite{Mar}, there exists a discrete and faithful representation of $\rho: G\rightarrow {\rm Iso}^+(\H^3)$ such that $F = \H^3/\rho(G).$ The lower boundary of $K$ is lifted to a totally umbilical $\rho$-invariant plane in $\H^3$. Thus, $\rho$ is a Fuchsian representation. It is straightforward that $K$ in $F$ is equal to the closure of the convex hull of the points $A_i$.

Conversely, if $P$ is a Fuchsian polyhedron, then it is isometric to a convex complex $K$ with all curvatures $\kappa_i=0$. Thereby, the uniqueness in Theorem~\ref{angles} implies the uniqueness in Theorem~\ref{fil}.
\end{proof}

In Subsection~\ref{DC} we will show that Theorem~\ref{angles} is equivalent to Theorem~\ref{Luo}. In the rest of this section we prove the following lemma:

\begin{lm}
\label{ultrapar}
Let $K$ be a convex complex. Then each prism of $K$ is ultraparallel.
\end{lm}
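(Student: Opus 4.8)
I proceed by isolating a single prism and studying its upper face plane $M$ against its lower face plane $\Pi$ (the plane carrying the lower triangle $B_iB_jB_h$). First I would record the two structural facts that come for free. Since inside each prism the lower edges $B_iB_j$ carry dihedral angle $\pi/2$, the three lateral planes are orthogonal to $\Pi$; and since a lateral face is a semi-ideal trapezoid whose ideal vertices $A_i,A_j$ lie on the same side of the line $(\text{lateral plane})\cap\Pi$, Lemma~\ref{distideal} applies and shows that each upper edge $A_iA_j$ is ultraparallel to $\Pi$, realizing a well-defined distance $\rho_{ij}>0$. Consequently the only way the prism can fail to be ultraparallel is that $M$ meets $\Pi$ along a line $\ell_0$ disjoint from the closed triangle $A_iA_jA_h$. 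The plan is to assume this and derive a contradiction.

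Next I would set up a sharp criterion by passing to the polar--dual picture. Write $\Pi=\overline n^{\perp}$ and $M=\overline m^{\perp}$ with $\overline n,\overline m\in\ds^3$, and recall the standard fact (see \cite{Rat}) that the two planes are ultraparallel precisely when $|\langle\overline m,\overline n\rangle|>1$. Choosing an orthonormal frame adapted to one lateral plane — its space-like unit normal $\overline p$ (so $\langle\overline p,\overline n\rangle=0$) together with the space-like unit vector $\overline q$ polar to the edge $A_iA_j$ inside that lateral plane (so that $\langle\overline q,\overline n\rangle=\cosh\rho_{ij}$) — every plane through $A_iA_j$ has normal $\overline m=\cos\psi\,\overline q+\sin\psi\,\overline p$, whence a short computation gives
$$\langle\overline m,\overline n\rangle=\cosh\rho_{ij}\cos\psi,$$
with $\psi$ measuring the tilt of the upper face away from the vertical lateral plane. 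Thus the prism is ultraparallel exactly when the upper face is a sufficiently ``flat roof'', $\cos\psi>\operatorname{sech}\rho_{ij}$, and the standing assumption that $M$ crosses $\Pi$ means the upper face is too steep, $\cos\psi_{ij}<\operatorname{sech}\rho_{ij}$ along every edge (the value $\langle\overline m,\overline n\rangle$ being common to all three).

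The final step is to invoke convexity, and this is where I expect the genuine difficulty. Translating steepness into dihedral angles, the upper edge $A_iA_j$ has dihedral angle $\pi/2-\psi_{ij}$ inside the prism (equal to $\pi/2$ for the flat roof and decreasing as the roof steepens); using that the link at each ideal vertex is Euclidean, these interact with the vertex relations $\phi_j+\phi_k+\omega_i=\pi$. The subtle point is that an intersecting upper face forces the upper dihedral angles to be \emph{small}, which is entirely compatible with the convexity bound $\tilde\phi_e\le\pi$: a single prism whose $\ell_0$ misses its triangle is itself a perfectly good convex solid, so the contradiction cannot be produced locally. The decisive input must be global: I would argue that the developed universal cover of $K$ is a convex region lying in the closed half-space bounded by $\Pi$, so that $\Pi$ is a global supporting plane and no supporting (upper-face) plane can cross it transversally within the region, forcing $\langle\overline m,\overline n\rangle>1$. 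The main obstacle is making this rigorous in the presence of the cone points $B_i$, where the lower boundary is only \emph{locally} isometric to $\Pi$: one must control the holonomy around the cusps and verify that the orthogonal projections of all prisms assemble into an honest hyperbolic cone surface, and it is precisely this passage from local convexity to the global ultraparallel conclusion that is the crux of the lemma.
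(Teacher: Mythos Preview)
Your setup through the polar--dual computation is correct and useful: you rightly observe that each lateral trapezoid is automatically ultraparallel (both upper endpoints ideal, same side of the lower line), so each upper edge carries a well-defined distance $\rho_e>0$ to the lower plane, and the criterion $|\langle\overline m,\overline n\rangle|>1$ reduces to the inequality $\cosh\rho_e\cos\psi_e>1$ along any one edge. You also correctly diagnose that the obstruction cannot be removed by looking at a single prism.

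The gap is the last step. Your proposed global argument --- develop the universal cover, declare $\Pi$ a supporting plane --- does not go through, and you yourself put your finger on why: the lower boundary is a hyperbolic \emph{cone} surface, so there is no single plane $\Pi\subset\H^3$ carrying all the lower faces simultaneously. Developing around a vertex $B_i$ with $\tilde\omega_i\neq 2\pi$ produces nontrivial holonomy, and the putative half-space bounded by $\Pi$ simply does not exist. ``Verifying that the projections assemble into a cone surface'' is true but irrelevant to whether a global supporting plane exists; it doesn't.

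The paper's argument avoids the global development entirely by a descent on the intrinsic edge invariants $\rho_e$. Suppose the upper plane $M_1$ of a prism meets its lower plane $M_2$ along a line $\ell$. The three ideal vertices lie on one arc of $\partial_\infty M_1\setminus\ell$; the middle one, say $A_i$, singles out the opposite edge $A_jA_h$ as \emph{heavy} (smallest $\rho$) and the other two as \emph{light}. A direct angle computation gives $\phi_i>\pi/2$ at the heavy edge and $\phi_j,\phi_h<\pi/2$ at the light ones. Now pass to the prism glued along the heavy edge (it cannot be a self-gluing with a light edge: different $\rho$). Gluing along the common lateral face keeps the lower faces coplanar in $M_2$; convexity $\tilde\phi\le\pi$ together with $\phi_i>\pi/2$ forces the new upper plane to intersect $M_2$ as well, and in the new prism the edge $A_jA_h$ is now \emph{light}. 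Hence the new heavy edge has strictly smaller $\rho$. Iterating yields a strictly decreasing sequence of values $\rho_e$, impossible since $E(T)$ is finite. The asymptotically parallel case is handled the same way. This chain argument is the missing idea: it replaces your unavailable global convex body by a finite combinatorial descent that only ever develops two adjacent prisms at a time.
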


\begin{proof}
Embed a prism $A_iA_jA_hB_hB_jB_i \subset K$ in $\H^3$. First, we show that the plane $A_iA_jA_h$ (denote in by $M_1$) can not intersect the plane $B_iB_jB_h$ (denote it by $M_2$) in $\H^3$.

Suppose the contrary. Let these two planes intersect and $l$ be the line of intersection.

The intersection of $M_1$ with $\partial_{\infty}\H^3$ is a circle. The line $l$ divides it into two arcs. All points $A_i$, $A_j$ and $A_h$ belong to the same arc and one of them lies between the two others. Assume that this point is $A_i$. Then we call the edge $A_jA_h$ \emph{heavy} and two other edges \emph{light} (see Figure~\ref{Pic4}).

\begin{figure}
\begin{center}
\includegraphics[scale=0.3]{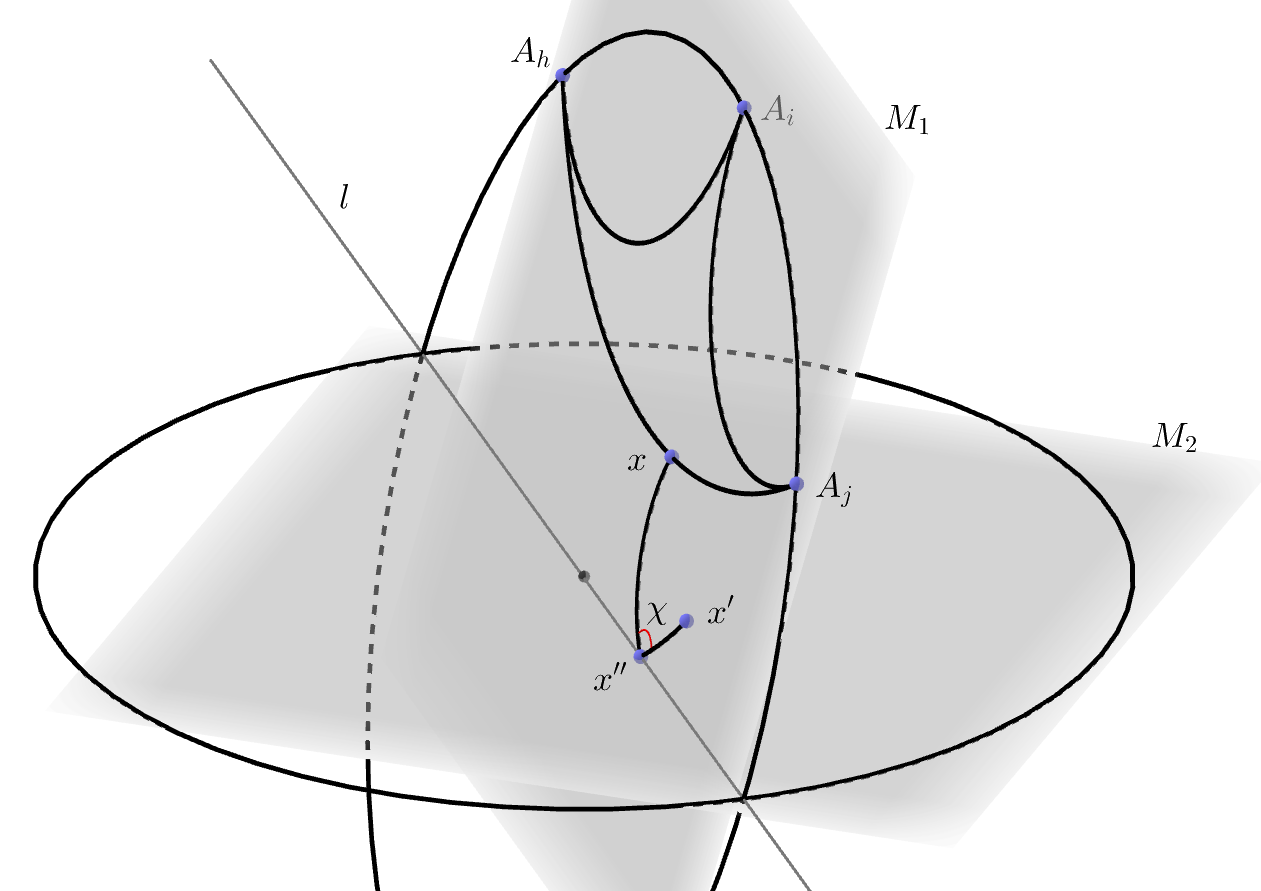}
\caption{To the proof of Lemma~\ref{ultrapar}.}
\label{Pic4}
\end{center}
\end{figure}

Let $\chi$ be the dihedral angle between $M_1$ and $M_2$. For every $x \in M_1$, we have $$\sinh{\rm dist}(x, M_2)=\sinh{\rm dist}(x, l)\sin(\chi),$$ by the law of sines in a right-angled hyperbolic triangle.

It follows that the distances from the light edges to $M_2$ are both strictly bigger than the distance from the heavy edge.
%Denote now the dihedral angles of the edges $A_iA_j$, $A_jA_h$ and $A_hA_i$ by $\alpha_h$, $\alpha_i$ and $\alpha_j$; of the edges $A_iB_i$, $A_jB_j$ and $A_hB_h$ by $\beta_i$, $\beta_j$ and $\beta_h$. Then $\alpha_i > \pi/2$ and $\alpha_j$, $\alpha_h < \pi/2$.
For the dihedral angles of the upper edges we have $\phi_i > \pi/2$ and $\phi_j$,~$\phi_h < \pi/2$.

Indeed, let $x \in A_jA_h$ be the nearest point from this edge to $M_2$, ${x' \in M_2}$ and $x'' \in l$ be the bases of perpendiculars from $x$ to $M_2$ and to $l$. Then $\angle xx'x'' = \pi/2$, $\angle x'xx'' < \pi/2$ and $\phi_i = \pi - \angle x'xx'' > \pi/2$. Next, we consider the ideal vertex $A_j$. Using that the sum of three dihedral angles at one vertex is equal $\pi$ we obtain $$\omega_j+\phi_i+\phi_h = \pi.$$ It implies that $\phi_h < \pi/2$. Similarly, $\phi_j < \pi/2$.

Edge $A_jA_h$ can not be glued in $T$ neither with the edge $A_iA_j$ nor with $A_iA_h$ because these edges have bigger distances to the lower face than $A_jA_h$. Therefore, there is another triangle $A_jA_hA_g \in T$ containing $A_jA_h$. Embed the corresponding prism $A_jA_hA_gB_gB_hB_j \subset K$ in $\H^3$ in such a way that it is glued with the former prism over the face $A_jA_hB_hB_j$ via an orientation-reversing isometry. Then $B_g\ \in M_2$.

The total dihedral angle at $A_jA_h$ is less or equal than $\pi$. 
%Since that, the line $l$ and the triangle $B_jB_hB_g$ lie on opposite sides with respect to the plane $A_jA_hA_g$ and 
Hence, the plane $A_jA_hA_g$ also intersects $M_2$. Therefore, the light edges and the heavy edge are defined for the new prism in the same way. Moreover, it is clear that in this prism the edge $A_jA_h$ is light. Hence, we see that the distance from the new heavy edge to $M_2$ is strictly less than the distance from $A_jA_h$. Now for this edge we choose the next prism containing it and continue this process. The distances from the heavy edges to $M_2$ are strictly decreasing in the obtained sequence of prisms. But the number of prisms in $K$ is finite. We get a contradiction.

It remains to consider the case when the upper face is asymptotically parallel to the lower face. It is clear that the contradiction is just the same.
\end{proof}

\begin{rmk}
This is equivalent to the following statement in the language of~\cite{Lu} (Theorem 14): if $T$ is a Delaunay triangulation of a hyperbolic polyhedral metric $d$ on $S_{g,n}$, then each triangle has a compact circumcircle (i.e. not horocyclic or hypercyclic).
\end{rmk}

\section{The space of convex complexes} \label{SP}

%Denote by $\mathcal{K}$ the set of marked convex complexes $K$ such that the upper boundary of $K$ is isometric to $(S_{g,n}, d)$. Formally, a marked complex is a pair $(K, f)$ where $f$ an isometry from the upper boundary of $K$ to $(S_{g,n}, d)$, but for convenience we will omit $f$ and we will write just $(S_{g,n}, d)$ for the upper boundary of $K$. The marked complexes are considered up to marked isometry (an isometry between $K_1$ and $K_2$ is called \emph{marked} if it induces an isometry from $(S_{g,n}, d)$ to itself isotopic to identity with respect to $\mathcal A$). From now on we will also omit the work \emph{marked}. In this section we are going to give a nice parametrization to $\mathcal K$.

Denote by $\mathcal{K}$ the set of all convex complexes with the upper boundary isometric to $(S_{g,n}, d)$ considered up to marked isometry (an isometry between $K_1$ and $K_2$ is called \emph{marked} if it induces an isometry from $(S_{g,n}, d)$ to itself isotopic to identity with respect to $\mathcal A$). In this section we are going to give $\mathcal K$ a nice parametrization.

Every $K \in \mathcal K$ can be represented as $K(T, {\bf r})$. Clearly, if $K' = K(T', {\bf r}')$, $K'' = K(T'', {\bf r}'')$ and ${\bf r}' \neq {\bf r}''$, then complexes $K'$ and $K''$ are not marked isometric. This defines a map, which we denote by ${\bf r}: \mathcal K \rightarrow \R^n$ abusing the notation. In Subsection \ref{SP}.1 we prove

\begin{lm}
\label{inj}
Let $(T', {\bf r})$ and $K''=(T'', {\bf r})$ be two convex pairs. Then the complexes $K'=K(T', {\bf r})$ and $K''=K(T'', {\bf r})$ are marked isometric.
\end{lm}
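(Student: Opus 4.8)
My plan is to prove that a convex complex $K(T,\mathbf r)$ depends, up to marked isometry, only on the decoration $\mathbf r$ and not on the triangulation $T$, and then to build the marked isometry between $K'$ and $K''$ by cutting them into common pieces. The geometric input I would establish first concerns the lower boundary. In a single prism each lateral edge $A_iB_i$ is orthogonal to the lower plane, so each lateral face is orthogonal to that plane and meets the lower face along a lower edge at a right angle; this is the relation $\pi/2$ already recorded in the definition of a prism. Hence, when two prisms are glued along the lateral face over an upper edge $A_jA_h$, the two lower faces approach the common lower edge $B_jB_h$ from opposite sides, each at angle $\pi/2$, and are therefore coplanar. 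Developing $K$ into $\H^3$, the lower boundary is thus flat across every lower edge, with the conical defect concentrated at the vertices $B_i$ (equivalently, along the inner edges $A_iB_i$). Together with Corollary~\ref{trap}, which expresses each lower edge length through $d$ and $\mathbf r$, this pins down the local geometry of the lower boundary in terms of $(d,\mathbf r)$ alone.

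Next I would extract from this a canonical coarse decomposition. Over a quadrilateral $A_iA_jA_gA_h$ the four lower vertices are coplanar, so the two adjacent prisms fit together into the convex hull of the four ideal points and their orthogonal projections to a single lower plane $\Pi$. If these four points are not cocircular, only one of the two diagonals of the ideal quadrilateral yields an upper dihedral angle $\le\pi$; hence if both $(T',\mathbf r)$ and $(T'',\mathbf r)$ are convex, the four points must be cocircular and the upper edge is flat. This is the Delaunay-type condition appearing in the Remark after Lemma~\ref{ultrapar}, and it is local: whether an upper edge is flat depends only on the two incident decorated triangles, i.e. only on $(d,\mathbf r)$. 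Consequently the family of flat (cocircular) edges is intrinsic to $(d,\mathbf r)$, and so is the decomposition $\mathcal D$ obtained by merging the faces across them into maximal cocircular ideal polygons. In particular both $T'$ and $T''$ contain every non-flat edge and are refinements of one and the same $\mathcal D$.

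It then suffices to match the two complexes over each flat face $\mathcal P$ of $\mathcal D$, and this is where non--semi-ideal prisms enter. Over $\mathcal P$ the lower faces lie in one plane $\Pi$, so there is a genuine orthogonal projection onto $\Pi$ and a well-defined lateral length at every point of $\mathcal P$, ideal or not. Overlaying $T'|_{\mathcal P}$ and $T''|_{\mathcal P}$ and dropping the vertical walls over the edges of the overlay cuts the region over $\mathcal P$ into finitely many ultraparallel prisms whose upper triangles may have non-ideal vertices at the crossing points of the two triangulations; by Corollary~\ref{existence} each such prism is determined up to isometry by the lengths of its upper and lateral edges, which are read off from $(d,\mathbf r)$. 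The two triangulations therefore subdivide the region over $\mathcal P$ into the very same collection of prisms, yielding an isometry over $\mathcal P$ that respects the upper surface; assembling these identifications across $\mathcal D$ produces the marked isometry $K'\cong K''$. The step I expect to be most delicate is making the coarse decomposition $\mathcal D$ genuinely canonical, that is, proving rigorously that convexity is equivalent to the decorated Delaunay property and that the cocircular decomposition of $(S_{g,n},d)$ depends only on $(d,\mathbf r)$; this is the geometric heart of the matter and is closely tied to the Epstein--Penner picture developed in Subsection~\ref{secsur}.
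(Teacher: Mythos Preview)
Your argument has a genuine gap at the crucial step of showing that the decomposition $\mathcal D$ is canonical.

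The sentence ``whether an upper edge is flat depends only on the two incident decorated triangles, i.e.\ only on $(d,\mathbf r)$'' conflates two different things. Flatness of an edge $e$ in $K(T,\mathbf r)$ is indeed determined by the two triangles of $T$ incident to $e$, but those triangles depend on $T$, not just on $(d,\mathbf r)$. Your quadrilateral argument establishes only this: if $T'$ and $T''$ both triangulate the \emph{same} quadrilateral $A_iA_jA_gA_h$ by its two diagonals, and both are convex, then the diagonal is flat. That handles only the case where $T'$ and $T''$ differ by a single flip. In general an edge $e'\in E(T')$ may cross several edges of $T''$ and bound no quadrilateral present in $T''$; your local observation says nothing here. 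You would need either flip-connectivity of convex triangulations through convex triangulations, or a direct proof that every strictly convex edge of $K'$ is an edge of $T''$---and neither is supplied.

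Your fallback to the Epstein--Penner picture is circular within the paper's logical structure. Subsection~\ref{secsur} proves that Epstein--Penner triangulations give convex pairs, but the converse---that every convex pair refines the Epstein--Penner decomposition---is obtained there by invoking Corollary~\ref{diffl}, which is itself a corollary of Lemma~\ref{inj}. So Epstein--Penner cannot be used to prove Lemma~\ref{inj} without an independent argument for that converse, which you have not given.

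The paper's approach sidesteps all of this. Rather than first identifying a canonical decomposition, it introduces the distance function $\rho_K$ on the upper boundary, analyses its restriction to geodesics as a piecewise function of the form $\arcsinh(b\cosh(x-a))$, and through Propositions~\ref{claim1}--\ref{claim4} proves directly that $\rho_{K'}(A)=\rho_{K''}(A)$ at every crossing point $A$ of an edge of $T'$ with an edge of $T''$. Once the lateral lengths agree at the crossing points, your final step---taking a common refinement and applying Corollary~\ref{existence} to the resulting non--semi-ideal prisms---goes through exactly as you describe, and this is precisely how the paper concludes.
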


\begin{crl}
The map ${\bf r}: \mathcal K \rightarrow \R^n$ is injective.
\end{crl}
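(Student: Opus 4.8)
The plan is to obtain the corollary as an immediate consequence of Lemma~\ref{inj}, since all the genuine content has already been isolated there; the corollary is essentially a repackaging. First I would record the two facts assembled in the paragraph preceding the statement. Every convex complex $K \in \mathcal{K}$ is of the form $K(T, {\bf r})$ for some convex pair $(T, {\bf r})$, so the assignment $K \mapsto {\bf r}$ makes sense. It is well-defined on marked-isometry classes because $r_i$ is the length of the lateral edge $A_iB_i$, that is, the signed distance from the canonical horosphere at the cusp $A_i$ to the singular point $B_i$ of the lower boundary. A marked isometry fixes each cusp together with its canonical horosection (these come from the fixed data $(S_{g,n},d)$) and carries singular points of one lower boundary to those of the other, hence it preserves every $r_i$ and therefore the whole vector ${\bf r}$. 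This is exactly the contrapositive of the ``clearly'' remark: marked-isometric complexes have equal weight vectors.

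To prove injectivity I would argue as follows. Suppose $K', K'' \in \mathcal{K}$ satisfy ${\bf r}(K') = {\bf r}(K'') =: {\bf r}$. Choosing representations $K' = K(T', {\bf r})$ and $K'' = K(T'', {\bf r})$, and using that $K'$ and $K''$ are convex complexes, both $(T', {\bf r})$ and $(T'', {\bf r})$ are convex pairs sharing the same weight vector. Lemma~\ref{inj} then applies verbatim and produces a marked isometry between $K'$ and $K''$, so they coincide as elements of $\mathcal{K}$. Combined with the well-definedness above, this gives that ${\bf r}(K') = {\bf r}(K'')$ if and only if $K' = K''$ in $\mathcal{K}$, which is precisely injectivity of ${\bf r}$.

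Consequently, no further work beyond invoking Lemma~\ref{inj} is needed, and for the corollary itself there is no real obstacle. The true difficulty resides entirely in Lemma~\ref{inj}, where one must show that two a priori distinct triangulations $T'$ and $T''$ carrying the same lateral-edge lengths yield isometric complexes. I expect that to hinge on showing that the lower boundary of $K(T, {\bf r})$, and in particular its decomposition into ideal polygons, is determined by the decoration encoded in ${\bf r}$ alone (via the Epstein--Penner convex hull of Subsection~\ref{EP}), independently of the refining triangulation $T$; once that independence is established, the entire complex is forced and the marked isometry follows.
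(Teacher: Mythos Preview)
Your proof of the corollary is correct and matches the paper's approach: the paper states it as an immediate consequence of Lemma~\ref{inj} with no further argument, and your unpacking of well-definedness plus the direct appeal to that lemma is exactly what is intended. One tangential remark: your closing speculation that Lemma~\ref{inj} itself will be proved via the Epstein--Penner convex hull is not how the paper proceeds---the paper proves Lemma~\ref{inj} in Subsection~\ref{secinj} by comparing piecewise distance-like functions along geodesics, and the Epstein--Penner connection appears only later, in Subsection~\ref{secsur}, to establish surjectivity (Lemma~\ref{sur}).
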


Hence, $\mathcal K$ can be identified with a subset of $\R^n$. In Subsection \ref{SP}.2 we show that

\begin{lm}
\label{sur}
The image ${\bf r}(\mathcal K) = \R^n$.
\end{lm}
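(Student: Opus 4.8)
The plan is to produce, for an arbitrary $\mathbf r\in\R^n$, a convex pair $(T,\mathbf r)$ whose complex $K(T,\mathbf r)$ lies in $\mathcal K$; since the weight vector of $K(T,\mathbf r)$ is $\mathbf r$ by construction, this yields $\mathbf r\in\mathbf r(\mathcal K)$ and hence surjectivity. The triangulation $T$ will not be arbitrary: I would take it to be an Epstein--Penner triangulation of $(S_{g,n},d)$ for the decoration determined by $\mathbf r$ (Subsection~\ref{EP}). The whole difficulty is concentrated in two claims about this particular pair, namely that $(T,\mathbf r)$ is \emph{admissible} (each prism exists) and that it is \emph{convex}; everything else is gluing.

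First I would fix the geometric dictionary between prisms and the light cone. Normalize the lower plane to be $M_2=\{x_4=0\}\cap\H^3$, with space-like unit normal $e_4$. By Lemma~\ref{scpr}.2, an ideal point $A$ sitting on the $x_4>0$ side of $M_2$ and decorated by a horosphere has polar vector $\overline L\in\L$ whose fourth coordinate equals $\pm e^{\dist(M_2,\,\mathrm{hor})}$, while a short computation (as in the proof of Lemma~\ref{h3}) shows that its orthogonal projection $B$ to $M_2$ satisfies $\overline x_B=\overline L/(\overline L)_4-e_4$ and $\langle\overline x_B,\overline x_B\rangle=-1$. Thus prescribing the lateral length $r_i$ of a semi-ideal prism amounts to prescribing $(\overline L)_4=e^{r_i}$, and Lemma~\ref{scpr}.3 together with Corollary~\ref{trap} turns the pairwise products $\langle\overline L_i,\overline L_j\rangle=-2e^{l_{ij}}$ into the lower edge lengths via $\cosh a_{ij}=1+2e^{l_{ij}-r_i-r_j}$. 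In other words, once the ideal vertices are lifted to $\L$ with fourth coordinates $e^{r_i}$, the upper boundary reproduces $(S_{g,n},d)$ and the feet determine a hyperbolic cone metric on the lower boundary.

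With this dictionary, both remaining claims become statements about the Epstein--Penner convex hull. Running the construction of Subsection~\ref{EP} for $(S_{g,n},d)$ with the decoration $\mathbf r$ gives light-like vectors in the Minkowski space of the surface and a convex hull $C$; by Lemma~\ref{EPlm} the space-like faces of $\partial_t C$ descend to the ideal decomposition that $T$ refines, and each such face has a \emph{time-like} supporting normal, i.e. a compact circumcircle. I would show that under the dictionary above this compactness is equivalent to the upper face being ultraparallel to $M_2$, so that the associated semi-ideal prism exists and is unique by Corollary~\ref{existence}; gluing the prisms along lateral faces (matching canonical horospheres) then yields $K(T,\mathbf r)$ with upper boundary isometric to $(S_{g,n},d)$ and weight vector $\mathbf r$. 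Convexity is the assertion that every upper dihedral angle is at most $\pi$; by the remark following Lemma~\ref{ultrapar} this is equivalent to $T$ being Delaunay for the lower cone metric $d'$, which in turn is the image under the dictionary of the defining inequality of the Epstein--Penner decomposition of the decorated surface. Thus convexity is inherited directly from the convexity of $C$.

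The main obstacle is precisely this equivalence: translating the Epstein--Penner/Delaunay condition for the decorated upper surface (faces with compact circumcircles in its own hyperbolic structure) into the geometric condition in $\H^3$ that each upper face is ultraparallel to $M_2$ and bends the right way. I expect this to require the quantitative formulas of Section~\ref{HG}---Corollaries~\ref{trapdist}, \ref{trap} and~\ref{alpha}, relating the circumradius of an ideal polygon, the lower edge lengths $a_{ij}$, and the lateral lengths $r_i$---rather than any soft argument, together with a careful check that the upper and lower circumcircle conditions are governed by the same sign. Once this equivalence is in hand, admissibility and convexity follow simultaneously, and the surjectivity of $\mathbf r$ is established.
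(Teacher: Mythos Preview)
Your plan---take $T$ to be an Epstein--Penner triangulation for the decoration $\mathbf r$ and show $(T,\mathbf r)$ is a convex pair---is exactly the paper's approach; the paper in fact proves the two-sided statement that $(T,\mathbf r)$ is convex if and only if $T$ is Epstein--Penner for $\mathbf r$.

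The execution differs in two respects. First, your dictionary fixes the \emph{lower} plane as $\H^2\subset\H^3$ and places the decorated ideal vertices above it with $(\overline L_i)_4=e^{r_i}$. The paper does the opposite: it embeds the Epstein--Penner light-like vectors $\overline l_i\in\R^{1,2}$ into $\R^{1,3}$ with fourth coordinate~$0$, so the \emph{upper} ideal triangle sits in $\overline{\H^2}$, and then manufactures the lower plane $N(\Delta)$ of each prism directly from the EP supporting normal $\overline m$ via $\overline n=\overline m+\lambda e_4\in\ds^3$. Admissibility is then one line ($\langle\overline n,\overline l_i\rangle=\langle\overline m,\overline l_i\rangle=-1$ says $N$ is tangent to the three horospheres, so the tangency points are the feet $B_i$), and no translation between the $2$-dimensional EP picture and a separate $3$-dimensional prism picture is required---they are literally the same vectors.

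Second, for convexity you propose to pass through the Delaunay condition on the induced lower metric $d'$. Be careful here: the remark after Lemma~\ref{ultrapar} does \emph{not} assert ``convex complex $\Leftrightarrow$ $T$ Delaunay for $d'$''; that equivalence is Lemma~\ref{Le}, which in the paper appears only in Subsection~\ref{DC}, \emph{after} Lemma~\ref{sur}, and rests on Leibon's circumcircle observation. The paper instead proves convexity directly in the coordinates above: restrict $\rho_K$ to the geodesic $A_iA_g$ crossing the edge $A_jA_h$, get two distance-like functions $\rho_1,\rho_2$ meeting at the crossing, and use that $\overline l_g\notin\mathcal L_{M(\Delta')}$ forces the horosphere $L_g$ strictly inside the half-space $N(\Delta')_-$; a sphere-to-horosphere limit then gives $\rho_1>\rho_2$ past the kink, i.e.\ strict convexity. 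This sidesteps entirely the quantitative translation you flag as the main obstacle. Your route is viable, but you would need to supply Leibon's argument (or an equivalent computation via Corollary~\ref{trap}) up front rather than cite the remark.
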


%In particular, it means that $\mathcal K$ is non-empty.

\subsection{The proof of Lemma \ref{inj}}
\label{secinj}

First, we introduce some machinery. Recall that the upper boundary of a convex complex $K=K(T, {\bf r})$ is identified with $(S_{g,n}, d)$. Define a function $$\rho_K: S_{g,n} \rightarrow \R_{>0}$$ to be the distance from $X \in S_{g,n}$ to the lower boundary of $K$.

\begin{dfn}
The function $\rho_K$ is called \emph{the distance function} of $K$.
\end{dfn}

%A_iA_jA_k$ ($A_iA_jA_k$ is a triangle of $T$) let $\rho_K(x)$ be the distance from $x$ to the plane containing the lower face of the prism $A_iA_jA_kB_kB_jB_i$ after some (and any) embedding in $\H^3$.

%We omit the subscript $K$ where it is redundant.

We need an explicit expression for $\rho_K$. Let $s: [x^0; x^1] \rightarrow S_{g,n}$ be a geodesic segment parametrized by length such that its image is contained in a triangle of $T$. Consider a trapezoid obtained from the segment $s$ and its projection to the lower boundary. Develop it to $\H^2$ and extend its upper and lower boundaries to two ultraparallel lines $\psi_1$ and $\psi_2$ respectively. The first formula of Lemma~\ref{dist} shows that $\rho_K\circ s$ has the form 
\begin{equation}
\label{pd}
\arcsinh(b\cosh(x-a))
\end{equation}
for some real number $a$ and positive real number $b$. Indeed, $b$ is the hyperbolic sine of the distance between $\psi_1$ and $\psi_2$ and $(x-a)$ is the distance from a point of $\psi_1$ to the closest point on $\psi_1$ to $\psi_2$.

\begin{dfn}
The function $\rho: \R \rightarrow \R$ of the form (\ref{pd}) is called a \emph{distance-like function}.
\end{dfn}

We establish some basic properties of distance-like functions that we need for the proof of Lemma~\ref{inj}.

\begin{prop}
\label{claim1}
Let $$\rho_1(x)=\arcsinh(b_1\cosh(x-a_1)),$$ $$\rho_2(x)=\arcsinh(b_2\cosh(x-a_2))$$ be two distance-like functions such that the pair $(a_1, b_1)$ is distinct from the pair $(a_2, b_2)$. Then the equation $\rho_1(x)=\rho_2(x)$ has at most one solution.
\end{prop}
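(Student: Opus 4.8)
The plan is to strip away the transcendental wrapper and reduce the statement to an elementary count of zeros of a combination of hyperbolic functions. Since $\arcsinh$ is strictly increasing, the equation $\rho_1(x) = \rho_2(x)$ is equivalent to $b_1\cosh(x - a_1) = b_2\cosh(x - a_2)$. Expanding each side with the addition formula $\cosh(x-a) = \cosh a \cosh x - \sinh a \sinh x$, I rewrite the difference as
$$ f(x) := b_1\cosh(x-a_1) - b_2\cosh(x-a_2) = P\cosh x - Q\sinh x, $$
where $P := b_1\cosh a_1 - b_2\cosh a_2$ and $Q := b_1\sinh a_1 - b_2\sinh a_2$. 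Thus it suffices to show that $f$ has at most one zero.

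First I would dispose of the degenerate case $P = Q = 0$. Squaring and subtracting the two equations $b_1\cosh a_1 = b_2\cosh a_2$ and $b_1\sinh a_1 = b_2\sinh a_2$ gives $b_1^2 = b_2^2$; since $b_1, b_2 > 0$ this forces $b_1 = b_2$, and then $\cosh a_1 = \cosh a_2$ together with $\sinh a_1 = \sinh a_2$ forces $a_1 = a_2$. This is exactly the coincidence excluded by hypothesis, so under our assumptions $(P, Q) \neq (0,0)$.

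The zero count is then cleanest in exponential form: writing
$$ f(x) = \tfrac12\big[(P - Q)e^{x} + (P + Q)e^{-x}\big], $$
the equation $f(x) = 0$ becomes $(P - Q)e^{2x} = -(P + Q)$. If $P \neq Q$, this reads $e^{2x} = -(P+Q)/(P-Q)$, and since $x \mapsto e^{2x}$ is a strictly increasing bijection onto $(0,\infty)$ it has at most one solution. If $P = Q$, then $P = Q \neq 0$ by the previous paragraph, and $f(x) = P e^{-x}$ never vanishes. In every case $f$ has at most one zero, which is the claim. There is no serious obstacle here; the only points demanding care are verifying that the simultaneous vanishing of $P$ and $Q$ genuinely characterizes the excluded coincidence $(a_1,b_1) = (a_2,b_2)$ (where positivity of the $b_i$ is essential), and keeping track of the boundary case $P = Q$ in the exponential reduction.
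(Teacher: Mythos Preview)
Your proof is correct. Both you and the paper begin by using the strict monotonicity of $\arcsinh$ to reduce to $b_1\cosh(x-a_1)=b_2\cosh(x-a_2)$, but then proceed differently. The paper forms the \emph{ratio} $\cosh(x-a_1)/\cosh(x-a_2)$ and, in the case $a_1\neq a_2$, computes its derivative to be $\sinh(a_2-a_1)/\cosh^2(x-a_2)$, which has constant nonzero sign; hence the ratio is strictly monotone and takes the value $b_2/b_1$ at most once. The case $a_1=a_2$, $b_1\neq b_2$ is then immediate. Your route instead forms the \emph{difference}, expands via the addition formula into $P\cosh x - Q\sinh x$, and passes to the exponential form $\tfrac12[(P-Q)e^x+(P+Q)e^{-x}]$ to count zeros directly. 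The paper's argument is a touch shorter (one derivative, one case split on $a_1$), while yours is more algebraic and trades the $a_1=a_2$ case split for the verification that $(P,Q)\neq(0,0)$ together with the $P=Q$ boundary case. Both are elementary and of comparable length; neither has any real advantage over the other.
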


\begin{proof}
Note that if $a_1 \neq a_2$, then the function $\frac{\cosh(x-a_1)}{\cosh(x-a_2)}$ has the derivative $\frac{\sinh(a_2-a_1)}{\cosh^2(x-a_2)}$, which has constant nonzero sign. Therefore, the equation $\rho_1(x)=\rho_2(x)$, which is equivalent to $\frac{b_2}{b_1}=\frac{\cosh(x-a_1)}{\cosh(x-a_2)}$, has at most one solution.

If $a_1=a_2$, but $b_1\neq b_2$, then for all $x$, $\rho_1(x)\neq\rho_2(x)$.
\end{proof}

\begin{prop}
\label{claim2} 
Let $$\rho_1(x)=\arcsinh(b_1\cosh(x-a_1)),$$ $$\rho_2(x)=\arcsinh(b_2\cosh(x-a_2))$$ be two distance-like functions such that for $x_0 \in \R$ we have $\rho_1(x_0)=\rho_2(x_0)$ and $\rho_1'(x_0)> \rho_2'(x_0)$. Then $a_2 > a_1$.
\end{prop}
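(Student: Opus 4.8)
The plan is to compute the derivative of a distance-like function explicitly and to reduce the inequality $\rho_1'(x_0) > \rho_2'(x_0)$ to a comparison of hyperbolic tangents. Starting from the defining relation $\sinh(\rho_i(x)) = b_i \cosh(x - a_i)$ and differentiating implicitly with respect to $x$, I would obtain $\cosh(\rho_i(x))\,\rho_i'(x) = b_i \sinh(x - a_i)$, that is
\[
\rho_i'(x) = \frac{b_i \sinh(x - a_i)}{\cosh(\rho_i(x))}.
\]

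Next I would exploit the equality hypothesis at $x_0$. Since $\rho_1(x_0) = \rho_2(x_0)$, the values $\cosh(\rho_1(x_0))$ and $\cosh(\rho_2(x_0))$ coincide and give a common \emph{positive} denominator; moreover $\sinh(\rho_1(x_0)) = \sinh(\rho_2(x_0))$, which by the defining relation reads $b_1 \cosh(x_0 - a_1) = b_2 \cosh(x_0 - a_2)$. As this common value is positive (both $b_i > 0$ and $\cosh > 0$), it is precisely the quantity I will divide by. Because the two derivatives share the same positive denominator, the assumption $\rho_1'(x_0) > \rho_2'(x_0)$ is equivalent to the numerator inequality $b_1 \sinh(x_0 - a_1) > b_2 \sinh(x_0 - a_2)$.

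Finally I would divide this last inequality by the common positive quantity $b_1 \cosh(x_0 - a_1) = b_2 \cosh(x_0 - a_2)$, which transforms it into $\tanh(x_0 - a_1) > \tanh(x_0 - a_2)$. Since $\tanh$ is strictly increasing on $\R$, this forces $x_0 - a_1 > x_0 - a_2$, hence $a_2 > a_1$, which is the desired conclusion.

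There is no genuine obstacle here beyond bookkeeping: the argument is a short implicit-differentiation computation followed by the monotonicity of $\tanh$. The one point that must be handled with care is that the reduction to a comparison of tangents is legitimate only because the common value $b_i \cosh(x_0 - a_i)$ is strictly positive, so that dividing by it preserves the sense of the strict inequality; this positivity is guaranteed by $b_i > 0$ together with $\cosh \geq 1$.
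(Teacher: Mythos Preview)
Your proof is correct and essentially identical to the paper's own argument: both compute $\rho_i'(x_0)=\dfrac{b_i\sinh(x_0-a_i)}{\cosh(\rho_i(x_0))}$ (the paper writes the denominator as $\sqrt{b_i^2\cosh^2(x_0-a_i)+1}$, which is the same thing), use $\rho_1(x_0)=\rho_2(x_0)$ to cancel the common positive denominator and to identify $b_1\cosh(x_0-a_1)=b_2\cosh(x_0-a_2)$, and then divide to reduce to $\tanh(x_0-a_1)>\tanh(x_0-a_2)$.
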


\begin{proof}
We have $$\rho'_1(x_0)=\frac{b_1\sinh(x_0-a_1)}{\sqrt{b_1^2\cosh^2(x_0-a_1)+1}}> \frac{b_2\sinh(x_0-a_2)}{\sqrt{b_2^2\cosh^2(x_0-a_2)+1}}=\rho'_2(x_0).$$ 
Using $\rho_1(x_0)=\rho_2(x_0)$ and the fact that $b_1$, $b_2$ are positive we obtain that this is equivalent to $$\tanh(x_0-a_1)=\frac{\sinh(x_0-a_1)}{\cosh(x_0-a_1)}> \frac{\sinh(x_0-a_2)}{\cosh(x_0-a_1)}=\tanh(x_0-a_2).$$ 
The function $\tanh$ is strictly increasing. This shows the desired statement.
\end{proof}

\begin{prop}
\label{claim3}
Let $\psi_1$ and $\psi_2$ be two distinct geodesic lines in $\H^2$ meeting at a point $A \in \partial_{\infty} \H^2$ and ultraparallel to a line $\psi_0$. Let $A$ be decorated by an horocycle and $\psi_1$, $\psi_2$ be parametrized by the (signed) distance to this horocycle. Denote the distance functions from $\psi_1$ and $\psi_2$ to $\psi_0$ by $$\rho_1(x)=\arcsinh(b_1\cosh(x-a_1)),$$  $$\rho_2(x)={\arcsinh(b_2\cosh(x-a_2))}$$ respectively. Then ${\rho_1(x)-\rho_2(x)}$ has a constant nonzero sign. Besides, if $\rho_1(x)>\rho_2(x)$, then $a_2>a_1$.
\end{prop}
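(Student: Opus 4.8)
The plan is to place the common ideal point $A$ at infinity in the upper half-plane model of $\H^2$. Then $\psi_1$ and $\psi_2$ become distinct vertical lines $\{u=p_1\}$ and $\{u=p_2\}$, the horocycle decorating $A$ becomes a horizontal line $\{v=v_0\}$, and $\psi_0$, being ultraparallel to both, becomes a semicircle with real center $c_0$ and radius $R$ disjoint from both verticals, so that $q_i:=(p_i-c_0)^2-R^2>0$. Crucially, parametrizing each $\psi_i$ by signed distance to the \emph{same} horocycle is synchronized: the point at parameter $x$ on either line sits at the common height $v=v_0 e^{-x}$ (with the sign chosen, as in Lemma~\ref{scpr}, so that $x>0$ lies outside the horoball).

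Next I would insert the standard point-to-geodesic distance formula in this model (equivalently $\sinh\dist=\langle\overline x,\overline m_0\rangle$ for the unit normal $\overline m_0$ of $\psi_0$),
$$\sinh\rho_i(x)=\frac{(p_i-c_0)^2+v^2-R^2}{2Rv}=\frac{q_i}{2Rv_0}\,e^{x}+\frac{v_0}{2R}\,e^{-x},\qquad v=v_0e^{-x}.$$
Writing this as $\sinh\rho_i(x)=P_ie^{x}+Q_ie^{-x}$ with $P_i,Q_i>0$, the decisive observation is that the coefficient $Q_i=v_0/(2R)$ of $e^{-x}$ does not depend on $i$: it records the leading behaviour of the distance as the point climbs toward $A$, which is governed only by height and hence is the same for every geodesic ending at $A$. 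Matching $P_ie^{x}+Q_ie^{-x}=b_i\cosh(x-a_i)$ then yields $b_i=2\sqrt{P_iQ_i}$ and $a_i=\tfrac12\ln(Q_i/P_i)$.

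Both assertions now follow mechanically. Since $Q_1=Q_2$,
$$\sinh\rho_1(x)-\sinh\rho_2(x)=(P_1-P_2)e^{x}=\frac{q_1-q_2}{2Rv_0}\,e^{x},$$
which keeps the constant sign of $q_1-q_2$ for all $x$; as $\sinh$ is strictly increasing, $\rho_1-\rho_2$ has that same constant sign, and it is nonzero exactly when the two distance-like functions differ, i.e.\ $q_1\ne q_2$ (this sharpens Proposition~\ref{claim1} from ``at most one crossing'' to ``none'' in the present configuration). For the second assertion, with $Q_1=Q_2=Q$ the identity $a_i=\tfrac12\ln(Q/P_i)$ makes $a_i$ strictly decreasing in $P_i$, so $\rho_1>\rho_2\iff P_1>P_2\iff a_1<a_2$, which is precisely $a_2>a_1$.

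The only real subtlety is the sign bookkeeping: one must commit to ``distance increasing away from $A$'' throughout, since the opposite orientation interchanges $P_i$ and $Q_i$ and would flip the final inequality $a_2>a_1$. The one genuine hypothesis doing work is that $\psi_1$ and $\psi_2$ share the ideal point $A$; this is exactly what forces $Q_1=Q_2$, and without it both coefficients would vary and $\sinh\rho_1-\sinh\rho_2$ could change sign. (The degenerate case $q_1=q_2$, where the two verticals are mirror images across the perpendicular from $A$ to $\psi_0$ and the distance functions coincide, is the sole configuration excluded by requiring distinct distance-like functions.)
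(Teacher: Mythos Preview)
Your proof is correct and more explicit than the paper's. The paper argues geometrically: it identifies $b_i=\sinh(\dist(\psi_i,\psi_0))$ and $a_i$ as the signed distance from the closest point $C_i\in\psi_i$ to the horocycle at $A$, then states that the sign of $\rho_1-\rho_2$ equals the sign of $b_1-b_2$ and that $a_i$ decreases as $b_i$ grows. The underlying reason is the relation $b_ie^{a_i}=e^{r}$ with $r=\dist(A,\psi_0)$ independent of $i$ (an instance of Lemma~\ref{distideal}), which is exactly your observation $Q_1=Q_2$ in different clothing. Your half-plane computation makes this invariant completely explicit and thereby turns the paper's ``straightforward'' into an actual calculation; it also gives the second claim in one line via $a_i=\tfrac12\ln(Q/P_i)$. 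You are right to flag the degenerate configuration $q_1=q_2$ (the two verticals symmetric about the perpendicular from $A$ to $\psi_0$), in which the lines are distinct but the distance-like functions coincide; the paper's parenthetical ``if and only if lines $\psi_1$ and $\psi_2$ coincide'' overlooks this, though it is harmless for the application in Lemma~\ref{inj}, where only the distance-like functions matter.
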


\begin{proof}
The first claim is straightforward. For the second claim, let $C_i \in \psi_i$ be the closest point from $\psi_i$ to $\psi_0$ for $i=1,2$. Recall that $b_i$ is the hyperbolic sine of the distance from $\psi_i$ to $\psi_0$ and $a_i$ is the distance from $C_i$ to the horocycle. Observe that the sign of ${\rho_1(x)-\rho_2(x)}$ is the sign of $b_1-b_2$. Also note that $a_1=a_2$ if and only if $b_1=b_2$ (if and only if lines $\psi_1$ and $\psi_2$ coincide) and $a_i$ decreases as $b_i$ grows.
\end{proof}

Consider $K=K(T, \bf r)$ and a geodesic $s: \R \rightarrow S_{g,n}$ joining two cusps (or, possibly, a cusp with itself), distinct from the edges of $T$ and parametrized by (signed) distance to the horocycle decorating one of its endpoints. Let $$\R =(-\infty; x_0] \cup [x_0; x_1]\cup\ldots\cup[x_k; +\infty)$$ be the subdivision induced by intersections of $s$ with the strictly convex edges of $T$. For convenience, we set $x_{-1}=-\infty$ and $x_{k+1}=+\infty$. For every $i=0,\ldots, k+1$, the restriction of $\rho_K\circ s$ to $(x_{i-1}; x_i)$ has the form~(\ref{pd}): $\arcsinh(b_i\cosh(x-a_i)).$ As our subdivision is induced by intersections with only strictly convex edges of $T$, each pair $(a_i, b_i)$ is distinct from the pair $(a_{i+1}, b_{i+1})$. The intersection points $x_0, \ldots, x_k$ are \emph{kink} points of $\rho_K\circ s$ in the sense that $\rho_K\circ s$ is not differentiable at these points, but both the left derivative and the right derivative exist. It is clear that convexity of $K$ means that at every kink point $x_i$ the left derivative of $\rho_K\circ s$ is strictly greater than the right derivative. We call a function of this type a \emph{piecewise distance-like function}.

\begin{figure}
\begin{center}
\includegraphics[scale=0.15]{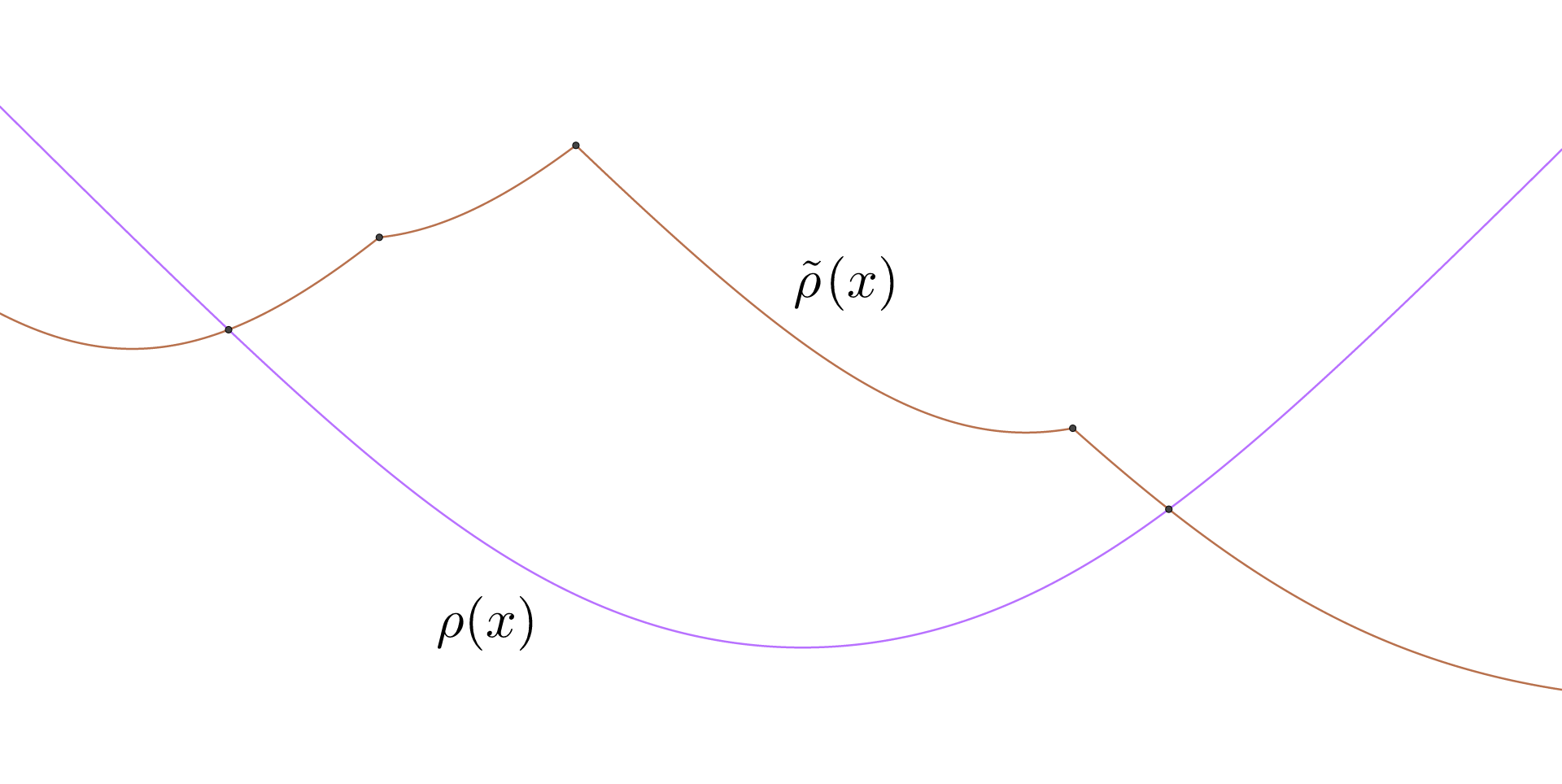}
\caption{Graphics of distance-like and piecewise distance-like functions.}
\label{Pic6}
\end{center}
\end{figure}

\begin{prop}
\label{claim4}
Let $\rho(x)=\arcsinh(b\cosh(x-a))$ be a distance-like function and $\tilde \rho(x)$ be a piecewise distance-like function. By $\rho_i(x)$ we denote the distance-like function, which coincides with $\tilde\rho(x)$ on $(x_{i-1}; x_i)$. Assume that for some $i$ and for all $x\in \R$ we have $\rho(x)>\rho_i(x)$. Then $\rho(x) > \tilde \rho(x)$ for all $x\in \R$.
\end{prop}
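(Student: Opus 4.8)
The plan is to reduce the statement to a single elementary comparison between consecutive pieces of $\tilde\rho$, and then to propagate the inequality $\rho>\rho_i$ outward from the $i$-th interval by a chain of such comparisons. Write the pieces of $\tilde\rho$ as $\rho_m(x)=\arcsinh(b_m\cosh(x-a_m))$, where $\rho_m$ is the distance-like function agreeing with $\tilde\rho$ on $(x_{m-1};x_m)$.

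First I would record the behavior at a kink point $x_m$, the common endpoint of the intervals carrying $\rho_m$ and $\rho_{m+1}$. By continuity $\rho_m(x_m)=\rho_{m+1}(x_m)$, and convexity of $K$ means precisely that the left derivative exceeds the right derivative, i.e. $\rho_m'(x_m)>\rho_{m+1}'(x_m)$. Since $(a_m,b_m)\neq(a_{m+1},b_{m+1})$, Proposition~\ref{claim1} guarantees that $\rho_m$ and $\rho_{m+1}$ meet at most once; as they meet at $x_m$, this is their only intersection. Combined with the derivative inequality this yields the key monotone comparison $\rho_m(x)>\rho_{m+1}(x)$ for all $x>x_m$ and $\rho_m(x)<\rho_{m+1}(x)$ for all $x<x_m$.

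With this in hand the proposition follows by a two-sided induction. For the right side I would show, by induction on $m\geq i$, that $\rho>\rho_m$ on $(x_{m-1};+\infty)$: the base case $m=i$ is the hypothesis, and the inductive step combines $\rho>\rho_m$ on $(x_m;+\infty)$ with the comparison $\rho_m>\rho_{m+1}$ on $(x_m;+\infty)$ to obtain $\rho>\rho_{m+1}$ there. Symmetrically, using the comparison to the left of each kink (so that $\rho_m>\rho_{m-1}$ for $x<x_{m-1}$), I would prove $\rho>\rho_m$ on $(-\infty;x_m)$ for all $m\leq i$ by downward induction. Since the right-hand statements cover $\tilde\rho=\rho_m$ on each $(x_{m-1};x_m)$ with $m\geq i$ and the left-hand ones cover the intervals with $m\leq i$, and since $x_{i-1}<x_i$, together they give $\rho>\tilde\rho$ on all of $\R$; the kink points themselves are included because each inductive statement holds on a half-line strictly containing the relevant closed interval.

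The only real content is the monotone comparison at the kinks; once the convexity condition is translated, via the single-intersection Proposition~\ref{claim1}, into the statement that each $\rho_m$ overtakes $\rho_{m+1}$ exactly at $x_m$, the remainder is a transparent chaining of strict inequalities requiring no further estimates. The mild point to watch is that all inequalities stay strict and that the kink points are covered, which is handled by proving each step on an open half-line rather than on a single interval.
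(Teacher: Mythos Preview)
Your proof is correct and follows essentially the same approach as the paper. Both arguments use Proposition~\ref{claim1} together with the kink condition to compare consecutive pieces $\rho_m$ and $\rho_{m+1}$, and then propagate the inequality outward by induction; the only cosmetic difference is that the paper first shows $\rho_i(x)>\tilde\rho(x)$ for $x\notin[x_{i-1};x_i]$ and then invokes the global hypothesis $\rho>\rho_i$, whereas you induct directly on the statement $\rho>\rho_m$ on a half-line.
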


\begin{proof}
If $i \neq 0$, then using Proposition~\ref{claim1} for any $x \in [x_{i-2}; x_{i-1})$ we obtain $\rho_i(x) > \rho_{i-1}(x)$. Similarly, if $i \neq k+1$, then for any $x\in (x_i; x_{i+1}]$, $\rho_i(x)>\rho_{i+1}(x)$. By induction, for any $x\notin [x_{i-1}; x_i]$ we get $\rho_i(x)>\tilde \rho(x).$ Then for all $x\in \R$ we have $\rho(x)>\tilde \rho(x).$
\end{proof}

Now we can prove Lemma \ref{inj}:

\begin{proof}

Let $A$ be the intersection point of an edge $e'$ of $T'$ with an edge $e''$ of $T''$. The edge $e'$ is a geodesic in $(S_{g,n}, d)$, we parametrize it by the distance to the horocycle at one of its endpoints and look at the restriction of the distance function $\rho_{K'}$. We denote the resulting distance-like function by $\rho$. Consider also the piecewise distance-like function $\tilde\rho(x)$ obtained from the restriction of the distance function $\rho_{K''}$ to $e'$. We prove that $\tilde\rho(x)\geq \rho(x)$ for every $x \in \R$.

As before, $\R =(-\infty; x_0] \cup [x_0; x_1]\cup\ldots\cup[x_k; +\infty)$ is the decomposition for $\tilde \rho$, and $\rho_i$ is a distance-like function, which coincides with $\tilde \rho$ on $(x_{i-1}; x_i)$. By Proposition~\ref{claim3}, the sign of $\rho(x)-\rho_0(x)$ is constant. Suppose that $\rho_0(x)<\rho(x)$. Then, by Proposition~\ref{claim3}, we have $a<a_0$. By Proposition~\ref{claim4}, we see that for all $x\in\R$, $\tilde\rho(x)<\rho(x)$ and by Proposition~\ref{claim2} and induction we get $a_0<a_1<\ldots< a_{k+1}$. Therefore, $a<a_{k+1}$. On the other hand, consider the parametrization of $e'$ by distance to the horosphere at another endpoint. Then the new distance functions are $\rho(l_{e'}-x)$ and $\tilde \rho(l_{e'}-x)$, where $l_{e'}$ is the length of edge $e'$. We apply Proposition~\ref{claim3} one more time and obtain $l_{e'}-a<l_{e'}- a_{k+1}$. This is equivalent to $a_{k+1}<a$ and gives a contradiction. We also obtain the same contradiction, if we suppose that $\rho_k(x) < \rho(x)$.

Now suppose that for $1\leq i \leq k$ and a point $y \in [x_{i-1}; x_i]$ we have $\tilde\rho(y) < \rho(y)$. By Proposition~\ref{claim1} and induction we know that $\rho_i$ is strictly bigger than $\tilde \rho$ outside of $[x_{i-1}; x_i]$. Also by Proposition~\ref{claim1} we see that either for all $x \in (-\infty; y]$ or for all $x \in [y; +\infty)$ we have $\rho(x)>\rho_i(x)$. Altogether this gives us that either $\rho_0(x)<\rho(x)$ or $\rho_k(x) < \rho(x)$. In any way we reduced ourselves to a previous case.

Thereby, $\tilde\rho(x)\geq \rho(x)$ and we infer that $\rho_{K''}(A) \geq \rho_{K'}(A)$. Similarly, we obtain that $\rho_{K'}(A) \geq \rho_{K''}(A)$ if consider the distance functions restricted to the edge $e''$. Therefore, $\rho_{K'}(A)=\rho_{K''}(A)$.

Let ${\mathcal A}^*$ be the set of all cusps and all intersection points of the edges of $T'$ with the edges of $T''$. The union $E(T') \cup E(T'')$ decomposes $S_{g,n}$ into convex geodesic polygons. We subdivide each polygon into geodesic triangles and obtain a triangulation $T$ with the vertex set ${\mathcal A}^*$ refining both $T'$ and $T''$. This triangulation induces a subdivision of both $K'$ and $K''$ into prisms (not all semi-ideal). Two corresponding prisms are isometric because of Corollary \ref{existence}. In turn we extend these isometries to a marked isometry of $K'$ to $K''$.

\end{proof}

From now on we denote by $K(\bf r)$ the convex complex defined by ${\bf r} \in \R^n$ if it exists. If $K=K(T,{\bf r})$, we call an edge of $T$ \emph{flat} in $K$ if its dihedral angle is equal to $\pi$. Otherwise, we call it \emph{strictly convex} in $K$. Lemma~\ref{inj} implies
\begin{crl}
\label{diffl}
If $(T', {\bf r})$ and $(T'', {\bf r})$ are two convex pairs, then each strictly convex edge of $T'$ in $K({\bf r})$ is an edge of $T''$ and vice versa. Hence, $T'$ and $T''$ differ only in flat edges of $K({\bf r})$.
\end{crl}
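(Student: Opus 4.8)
The plan is to read off the statement from Lemma~\ref{inj} together with the observation that being a strictly convex edge is an isometry invariant. First I would invoke Lemma~\ref{inj} to obtain a marked isometry $f\colon K' \to K''$ between $K'=K(T',{\bf r})$ and $K''=K(T'',{\bf r})$. Since $f$ is an isometry of hyperbolic cone-manifolds with boundary, it preserves the local geometry of the upper boundary; in particular it carries the \emph{crease locus} of the upper boundary of $K'$---the set of points at which $\partial_+K'$ fails to be locally totally geodesic, which is precisely the union of the strictly convex edges of $T'$---onto the crease locus of $\partial_+K''$, i.e. onto the union of the strictly convex edges of $T''$. Here I use that away from its edges the upper boundary of $K(T,{\bf r})$ lies in a single hyperbolic plane, and that across a flat edge (dihedral angle $\pi$) the two adjacent faces are coplanar, so the boundary is locally totally geodesic exactly off the strictly convex edges.

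Next I would use the marking to pin down $f$ on the upper boundary. By definition the induced isometry $\phi$ of $(S_{g,n},d)$ is isotopic to the identity relative to $\mathcal A$. For $g>1$ a complete finite-area hyperbolic surface admits no nontrivial isometry isotopic to the identity (such an isometry acts trivially on $\pi_1$, hence is the identity), so $\phi=\mathrm{id}$. Therefore $f$ restricts to the identity on the upper boundary $(S_{g,n},d)$, and in particular it fixes the crease locus pointwise as a subset of $(S_{g,n},d)$.

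It remains to convert this into the edge statement. Let $e'$ be a strictly convex edge of $T'$: a geodesic arc between two cusps along which $\partial_+K'$ has dihedral angle strictly less than $\pi$. Under $f$ it is fixed, so inside $K''$ the same arc $e'$ is a locus of dihedral angle strictly less than $\pi$. But in $K''=K(T'',{\bf r})$ the upper boundary is totally geodesic away from the edges of $T''$, hence every crease lies on an edge of $T''$; since $e'$ is a single geodesic arc realizing a crease throughout and joins two cusps without passing through a third, it must coincide with exactly one edge of $T''$, which is then automatically strictly convex. By symmetry each strictly convex edge of $T''$ is a strictly convex edge of $T'$. Thus $T'$ and $T''$ share the same set of strictly convex edges, and any edge in which they differ has dihedral angle $\pi$, i.e. is flat, which is the final assertion.

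The main obstacle is the middle step: justifying that the marked isometry acts as the identity on the upper boundary. This rests on the standard rigidity fact that an isometry of a hyperbolic surface of negative Euler characteristic which is isotopic to the identity must be the identity. Once this is in place, the identification of the strictly convex edges of $T'$ with edges of $T''$ is essentially formal, using only that the upper boundary of $K(T,{\bf r})$ is totally geodesic precisely away from the strictly convex edges of $T$.
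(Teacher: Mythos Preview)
Your proposal is correct and follows the same approach the paper has in mind: the paper simply writes ``Lemma~\ref{inj} implies'' before stating the corollary, and your argument spells out precisely why---the marked isometry restricts to the identity on $(S_{g,n},d)$ (by rigidity of finite-area hyperbolic surfaces, or equivalently because the isometry built in the proof of Lemma~\ref{inj} is the identity on the upper boundary by construction), hence carries the crease locus to itself, so the strictly convex edges of $T'$ and $T''$ coincide as subsets of $(S_{g,n},d)$. Your final step identifying a crease arc with a single edge of $T''$ is fine, since edges of a geodesic triangulation meet only at cusps.
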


%Let $(T, {\bf r})$ be a convex pair and $K=K(T, {\bf r})$ be the corresponding convex complex.

\begin{dfn}
%For a triangle $\Delta \in F(T)$, the \emph{face} of complex $K(T, {\bf r})$ containing $\Delta$ is the union of all triangles $\Delta'$ such that there exists a path from an interior point of $\Delta$ to an interior point of $\Delta'$ that intersects only flat edges.
Let $K$ be a complex. We say that two point of its upper boundary \emph{lie in the same face} if they can be connected by a path that does not intersect strictly convex edges of $K$. This defines an equivalence relation. A \emph{face} of $K$ is the union of all points in an equivalence class.
\end{dfn}

Thereby, we obtain the decomposition of the upper boundary of $K$ into faces. By definition, a point of a strictly convex edge does not belong to any face. A pair $(T, {\bf r})$ is convex if and only if $T$ refines the face decomposition of $K(\bf r)$.

%In our faces we consider triangles with deleted vertices. It might happen that the whole star of a vertex of $T$ is contained in one face $\Pi$. Then $\Pi$ is not simply-connected. Below we prove that in this case some inner edges of $\Pi$ are strictly convex and if we delete the union of all such edges, then we obtain a simply-connected set (see Figure~\ref{Pic5}).

%\begin{figure}
%\begin{center}
%\includegraphics[scale=0.3]{Pic 5 Framed.png}
%\caption{An example of a face. The inner vertices are deleted, therefore it is not simply-connected. The dotted edges are flat. If we delete other inner edges, we obtain a simply-connected set.}
%\label{Pic5}
%\end{center}
%\end{figure}

%\begin{dfn}
%An open face $\Pi^o$ of $K$ is a face $\Pi$ minus $E_s({\bf r})$.
%\end{dfn}

%An alternative analytic definition: an open face is a maximal connected open set such that the distance function $\rho_K$ is of the form (\ref{pd}) over it.

%\begin{dfn}
%A face is called \emph{ convexly simply-connected} if each nontrivial closed curve in this face intersects a strictly convex edge of $T$.
%\end{dfn}

\begin{lm}
\label{splcn}
A face $\Pi$ of a convex complex $K$ is simply connected.
\end{lm}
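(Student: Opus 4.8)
The plan is to lift $\Pi$ to the universal cover, where it becomes a genuine flat face of a convex set, and then to exploit the elementary fact that a flat face of a convex body is itself convex, hence simply connected. The two things to verify are that the lifted face really is such a convex slice, and that it maps homeomorphically down to $\Pi$; the second is where Lemma~\ref{ultrapar} enters.

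First I would show that $\Pi$ develops into a single totally geodesic plane $M \subset \H^3$. Since the upper boundary carries the smooth metric $(S_{g,n},d)$, the interior of $\Pi$ contains no singular points of $K$, and $\Pi$ contains no strictly convex edges by definition. Two prisms glued along a \emph{flat} edge (dihedral angle $\pi$) have coplanar upper faces, so all upper faces making up $\Pi$ lie in one plane after developing. Passing to the universal cover $\tilde F$ of the ambient Fuchsian manifold, the lift $\tilde K$ of $K$ is convex, its upper boundary is an embedded convex surface, and a lift $\hat\Pi$ of $\Pi$ is exactly the intersection $M\cap\tilde K$ with the support plane $M$. As $\tilde K$ is convex, $M\cap\tilde K$ is a convex subset of $M\cong\H^2$; in particular it is simply connected, and any ideal point it contains (a lift of a cusp $A_i$) lies on its ideal boundary, never in its interior. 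This last point is crucial: it shows $\Pi$ is an ideal polygon, so no cusp can be interior to a face, ruling out the potential ``annulus around a puncture''.

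Next I would descend back to $\Pi$. The covering $\hat\Pi\to\Pi$ is the restriction of the covering of upper boundaries, so it suffices to show the stabilizer of $\hat\Pi$ in the deck group is trivial. If $\gamma\neq 1$ stabilized $\hat\Pi$, then by equivariance of the developing map $\gamma$ would preserve $M$ (it sends a spanning subset of $M$ into $M$). By Lemma~\ref{ultrapar} the lower plane $\tilde M_2$ is ultraparallel to $M$, so $\overline{M}$ and $\overline{\tilde M_2}$ are disjoint in $\overline \H^3$; and $\gamma$ also preserves $\tilde M_2$. But a hyperbolic $\gamma$ preserving $M$ has its axis inside $M$, and preserving $\tilde M_2$ forces the same axis into $\tilde M_2$, giving $M\cap \tilde M_2\neq\emptyset$; a parabolic $\gamma$ preserving both planes forces a common ideal fixed point, giving $\overline{M}\cap\overline{\tilde M_2}\neq\emptyset$. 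Either way this contradicts ultraparallelism. (Recall the relevant surface group $\pi_1(S_g)$ is cocompact, hence has no parabolics, and the peripheral loops around cusps act trivially, so these elements give no obstruction.) Thus the stabilizer is trivial, $\hat\Pi\to\Pi$ is a homeomorphism, and $\Pi$ inherits simple connectedness from the convex set $\hat\Pi=M\cap\tilde K$.

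The main obstacle is the bookkeeping for a \emph{general} convex complex with nonzero curvatures $\tilde\kappa_i$ along the edges $A_iB_i$: then $K$ is only a cone manifold and there is no global convex hull inside a single copy of $\H^3$. There I would replace the universal cover of $\tilde F$ by the developing map of $K$ on the complement of its singular locus, using that $\Pi$ avoids this locus entirely, that local convexity still presents each developed face as a support-plane slice (hence convex), and that the holonomy of any loop in $\Pi$ which preserves $M$ is again obstructed by Lemma~\ref{ultrapar} exactly as above. The delicate case is a cusp with $\tilde\kappa_i=0$, where the local holonomy degenerates; but even there the convexity of the developed slice forbids an interior ideal point, which is precisely what closes the argument.
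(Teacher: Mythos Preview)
Your approach is genuinely different from the paper's. The paper never invokes a global convex body or a holonomy representation; instead it argues directly with the distance function $\rho_K$. If $\Pi$ were not simply connected, an intermediate-value argument on a developed ideal polygon produces a \emph{closed geodesic} $\psi'$ lying entirely in $\Pi$. Then $\rho_K$ restricted to $\psi'$ must be periodic, yet since $\psi'$ crosses no strictly convex edge it has the form $\arcsinh(b\cosh(x-a))$, which is never periodic. This works uniformly for every convex complex, singular or not, using only machinery already set up in the paper. Morally your obstruction is the same one --- an isometry preserving two ultraparallel planes fixes their common perpendicular and hence acts elliptically on each plane, which is exactly the non-periodicity of $\arcsinh(b\cosh(x-a))$ --- but packaged through developing maps instead of through $\rho_K$.

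In the Fuchsian case your argument is fine. In the general cone-manifold case, however, there is a real gap. The sentence ``local convexity still presents each developed face as a support-plane slice (hence convex)'' is not justified: once $\tilde\kappa_i\neq 0$ there is no global convex body in $\H^3$ for $M$ to support, and the developed image of $\widetilde{K^{reg}}$ need not be convex or even embedded. You also assert that ``the peripheral loops around cusps act trivially'', which is false here --- their $3$-dimensional holonomy is a rotation by the cone angle $\tilde\omega_i$ about the developed edge $A_iB_i$, and the relevant fundamental group is $\pi_1(S_{g,n})$, which does contain parabolics.

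The fix is not the one you sketch but a purely two-dimensional observation: the strictly convex edges lift to pairwise \emph{disjoint} complete geodesics in the universal cover $\H^2$ of $(S_{g,n},d)$ (edges of a triangulation meet only at cusps), so every component $\hat\Pi_0$ of their complement is an intersection of half-planes, hence convex and simply connected; in particular $\pi_1(\Pi)\hookrightarrow\pi_1(S_{g,n})$. With that in hand your holonomy step does finish the proof: the $3$D holonomy of any loop in $\Pi$ preserves both $M$ and $M_2$, hence is a rotation about their common perpendicular and restricts to an elliptic isometry of $M$; but this restriction is conjugate to an element of the torsion-free Fuchsian group $\pi_1(S_{g,n})$, so it must be trivial. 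The paper's route via $\rho_K$ reaches the same contradiction without having to sort out any of this bookkeeping.
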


\begin{proof}
%Let $\Pi^o$ be an open face. 
First, we prove that if $\Pi$ is not simply connected, then there is a closed geodesic in $\Pi$. 

Let $T$ be a triangulation such that $K=K(T, {\bf r})$. Choose a simple homotopically nontrivial closed curve $\psi$ in $\Pi$ that is transversal to interior edges of $\Pi$. Develop all triangles of $T$ that intersect $\psi$ to $\H^2$ (each triangle is developed once). We obtain an ideal polygon $P$. The triangulation $T$ is lifted to a triangulation of $P$. All inner edges of $P$ are lifts of flat edges of $K$.

Let $\tau: P \rightarrow \Pi$ be the projection. It is injective in the interior, but glue at least two boundary edges of $P$ to a flat edge of $\Pi$. Denote them by $AB$ and $CD$: $\tau(AB)=\tau(CD)$, $\tau(A)=\tau(C)$ and $\tau(B)=\tau(D)$ (note that $A$ may coincide with $C$ and $B$ may coincide with $D$). %Recall that ideal points are decorated by canonical horocycles %and the decoration defines the gluing map between $AB$ and $CD$. 
For a point $X \in AB$ there is a unique point $Y \in CD$ such that $\tau(Y)=\tau(X)$. A hyperbolic segment $XY$ projects to almost a geodesic loop in $\Pi$. It can have a kink point only at $\tau(X)=\tau(Y)$. Clearly, $\tau(XY)$ is a closed geodesic if and only if $\angle BXY + \angle XYD = \pi$. It is clear that as $X$ tends to $B$, the point $Y$ tends to $D$ and this sum tends to $2\pi$. Similarly, as $X$ tends to $C$, this sum tends to 0. Therefore, there exists $X$ such that this sum is equal to $\pi$. In this case $\tau(XY)$ is a closed geodesic $\psi' \subset \Pi$. %It intersects only edges of $T$ that were lifted to inner edges of $P$. Therefore, it does not intersect any strictly convex edges.

Consider the distance function $\rho_K$. Its restriction to $\psi'$ must be periodic, because $\psi'$ is a closed geodesic. On the other hand $\psi'$ intersects no strictly convex edges. Therefore, the restriction of $\rho$ to $\psi'$ has the form (\ref{pd}), which is not periodic. We obtain a contradiction.
\end{proof}

%Lemma \ref{splcn} implies that for every face there are at most finitely many triangulations of this face containing all strictly convex edges.

\begin{crl}
For every ${\bf r} \in \R^n$ there are finitely many triangulations $T$ such that the pair $(T, {\bf r})$ is convex.
\end{crl}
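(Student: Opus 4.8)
The plan is to reduce the count to a finite combinatorial problem by combining Corollary~\ref{diffl} with the simple connectivity established in Lemma~\ref{splcn}. Fix ${\bf r} \in \R^n$. If no convex pair $(T, {\bf r})$ exists, the number of such triangulations is zero and there is nothing to prove, so I would assume at least one convex pair exists. Then the complex $K({\bf r})$ is well defined, and by Corollary~\ref{diffl} any two convex pairs $(T', {\bf r})$ and $(T'', {\bf r})$ share the same strictly convex edges, hence they induce one and the same face decomposition of the upper boundary. Recall also that, as noted after the definition of a face, a pair $(T, {\bf r})$ is convex precisely when $T$ refines this fixed face decomposition of $K({\bf r})$. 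Thus counting convex pairs $(T, {\bf r})$ is the same as counting triangulations of $(S_{g,n}, d)$ with vertices at $\mathcal A$ that refine the face decomposition of $K({\bf r})$.

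Next I would argue that the face decomposition is finite and that each face is a finite-sided ideal polygon. Since $K({\bf r})$ is built from the finitely many triangles of any one triangulation $T$, it has finitely many prisms, hence finitely many edges and in particular finitely many strictly convex edges. The faces are the components cut out by the strictly convex edges, so there are finitely many of them and each is bounded by a finite cyclic sequence of strictly convex edges, every such edge joining two cusps of $\mathcal A$. By Lemma~\ref{splcn} each face $\Pi$ is simply connected; therefore $\Pi$ is a topological disk whose boundary is a closed curve subdivided into finitely many geodesic arcs with ideal endpoints, i.e. an ideal polygon with a finite number of (not necessarily distinct) vertices.

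Finally I would count. A triangulation $T$ refining the face decomposition is obtained by retaining all strictly convex edges and triangulating each face independently by non-crossing geodesic diagonals joining its cusps. For a fixed face, the simple connectivity from Lemma~\ref{splcn} guarantees that each such diagonal has a unique geodesic representative determined by the pair of boundary vertices it joins, so the triangulations of $\Pi$ are in bijection with the combinatorial triangulations of a polygon with a fixed finite number of vertices, of which there are only finitely many (at most a Catalan number). As there are finitely many faces and the choices in distinct faces are independent, the total number of refining triangulations, and hence of convex pairs $(T, {\bf r})$, is finite.

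The crux of the argument is Lemma~\ref{splcn}: without simple connectivity a face could carry nontrivial topology (for instance an annular region), which would admit infinitely many non-isotopic triangulations and destroy the count. Ruling this out is exactly what reduces the problem to the elementary fact that a polygon with finitely many vertices has finitely many triangulations, so the essential work has already been carried out in establishing Corollary~\ref{diffl} and Lemma~\ref{splcn}.
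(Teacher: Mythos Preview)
Your argument is correct and is exactly the reasoning the paper intends: the corollary is stated without proof immediately after Lemma~\ref{splcn}, precisely because once faces are known to be simply connected ideal polygons (via Corollary~\ref{diffl} and Lemma~\ref{splcn}), the finiteness reduces to the elementary fact that a finite-sided polygon admits only finitely many triangulations. You have simply spelled out what the paper leaves implicit.
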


%More precisely, we have the following.

%\begin{crl}
%\label{distr}
%If ${\bf r} \in \R^n$ is such that at least one convex complex $(T, {\bf r})$ exists, then ${\bf r}$ determines a decomposition of $S_{g,n}$ into faces. Every such $T$ can be obtained as a refinement of this decomposition.
%\end{crl}

For a triangulation $T$ denote by $\mathcal K(T) \subset \mathcal K =\R^n$ the set of all ${\bf r} \in \R^n$ such that the pair $(T, {\bf r})$ is convex. This defines a subdivision of $\R^n$ into cells corresponding to different admissible triangulations. 
%An inner points of a cell $\mathcal K(T)$ to ${\bf r}$ such that the decomposition in Corollary \ref{distr} is the triangulation $T$ itself. Boundary points of $\mathcal K(T)$ have the property that there are ideal polygons in this decomposition that are not triangles.
It is evident that the boundary of $\mathcal K(T)$ is piecewise analytic.

\subsection{Proof of Lemma \ref{sur}}
\label{secsur}

\begin{lm}
The pair $(T, {\bf r})$ is convex if and only if $T$ is an Epstein--Penner triangulation for ${\bf r} \in \R^n$.
\end{lm}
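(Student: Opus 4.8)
My plan is to reduce the statement to a comparison of two polygonal decompositions of $(S_{g,n},d)$: the \emph{face decomposition} of $K(\bf r)$ and the Epstein--Penner decomposition for the decoration $\bf r$. Once I show these coincide, the lemma follows at once: the pair $(T,{\bf r})$ is convex precisely when $T$ refines the face decomposition of $K(\bf r)$ (as established after the definition of a face), while $T$ is an Epstein--Penner triangulation for $\bf r$ precisely when it refines the Epstein--Penner decomposition for $\bf r$. So everything comes down to identifying the two decompositions.

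To compare them I would lift the complex to $\H^3 \subset \R^{1,3}$, placing the lower boundary in the totally geodesic plane $N = \{x_4 = 0\}$ with space-like unit normal $\overline n$. Writing $B_i$ for the foot of the lateral edge through the ideal vertex $A_i$ and using that $A_iB_i$ is a common perpendicular of length $r_i$, Lemma~\ref{scpr} expresses the polar vector of the canonical horosphere at $A_i$ as $\overline l_{A_i} = e^{r_i}(\overline x_{B_i} + \overline n)$. The first step is to translate convexity into a statement about the family $\{\overline l_{A_i}\}$. By the argument already used in the proof of Lemma~\ref{ultrapar}, an upper edge $A_jA_h$ is flat exactly when its two adjacent ideal triangles lie in a common hyperbolic plane; since a hyperbolic plane corresponds to a linear hyperplane of $\R^{1,3}$, and $\overline l_{A_i}$ lies on it iff its light-like direction does, flatness is equivalent to the four vectors $\overline l_{A_i},\overline l_{A_j},\overline l_{A_h},\overline l_{A_g}$ being linearly dependent, i.e.\ to the four feet $B_\bullet$ being concyclic in $N$. (This is precisely the circumcircle condition recorded in the remark following Lemma~\ref{ultrapar}.) More globally, convexity of $K$ says that the upper faces realize the faces of the convex hull of the decorated ideal points $\overline l_{A_i}$, which is exactly the object entering the Epstein--Penner construction of Subsection~\ref{EP}.

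It then remains to identify this convex hull with the one defining the Epstein--Penner decomposition for $\bf r$. Here I would invoke Corollary~\ref{trap}: the identity $\cosh(a_{ij}) = 1 + 2e^{l_{ij}-r_i-r_j}$ gives $\langle \overline l_{A_i}, \overline l_{A_j}\rangle = -2e^{l_{ij}}$ along an upper edge, so the inner products of the polar vectors reproduce the decorated edge data of $(S_{g,n},d)$, with the factors $e^{r_i}$ carrying exactly the shift of the decoration by $r_i$. Feeding this into the properties of the time-like boundary $\partial_t C$ collected in Lemma~\ref{EPlm}, the faces of the convex hull project to the cells of the Epstein--Penner decomposition, and the local flatness condition above matches the cospherical condition that an edge is absent from that decomposition. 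This yields the equality of the two decompositions and hence the lemma.

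The step I expect to be the main obstacle is this last identification. The difficulty is that the convexity of $K$ is measured in the ambient Minkowski space $\R^{1,3}$ of the three-dimensional complex, whereas the Epstein--Penner decomposition is defined intrinsically in the Minkowski space $\R^{1,2}$ of the surface; reconciling the two models, and in particular confirming that the decoration that appears is exactly $\bf r$ rather than the canonical one, requires care. Concretely, one must track how the developing map of the upper boundary relates the ambient polar vectors $\overline l_{A_i}$ to the intrinsic ones and verify that the resulting coplanarity (for $\partial_t C$) and concyclicity (for the feet $B_\bullet$) conditions coincide edge by edge. This is exactly where the prism identities of Subsection~\ref{PC1}, culminating in Corollary~\ref{trap}, do the essential work.
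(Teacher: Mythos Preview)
Your reduction to comparing the face decomposition of $K({\bf r})$ with the Epstein--Penner decomposition is correct, and the local equivalence ``flat edge $\Leftrightarrow$ four feet $B_\bullet$ concyclic'' is valid. But the global architecture you propose has a genuine gap.

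You write that you would ``lift the complex to $\H^3 \subset \R^{1,3}$, placing the lower boundary in the totally geodesic plane $N=\{x_4=0\}$''. This is not available: the complex $K({\bf r})$ is a hyperbolic \emph{cone} manifold, with curvatures $\tilde\kappa_i$ along the lateral edges $A_iB_i$, and its lower boundary $(S_{g,n},d')$ has cone points at the $B_i$. Unless all $\tilde\kappa_i=0$ (which is exactly the case you are trying to reach, not the generic one), neither the complex nor its lower boundary develops into $\H^3$ with a single geodesic plane as lower face. Consequently there is no well-defined global family $\{\overline l_{A_i}\}$ in one copy of $\R^{1,3}$, and the phrase ``convexity of $K$ says that the upper faces realize the faces of the convex hull of the decorated ideal points $\overline l_{A_i}$'' has no meaning in this setting. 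You anticipate trouble at the last step (``reconciling the two models''), but the obstruction is earlier and structural, not a matter of bookkeeping.

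The paper's proof reverses the roles of the two boundaries and thereby avoids the obstruction. The \emph{upper} boundary $(S_{g,n},d)$ is smooth, so it develops globally to $\H^2\subset\R^{1,2}$, and the Epstein--Penner hull $C$ lives there. One then embeds $\R^{1,2}\hookrightarrow\R^{1,3}$ and, for each triangle $\Delta$ of $\hat T$, pushes the time-like normal $\overline m$ of the supporting plane $M(\Delta)$ of $C$ along $\overline e_4$ to a point $\overline n\in\ds^3$; the dual plane $N(\Delta)\subset\H^3$ is the common tangent plane to the three decorated horospheres and serves as the lower face of the prism over $\Delta$. Different triangles give different planes $N(\Delta)$, which is exactly how the cone singularities are absorbed. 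Convexity is then checked \emph{locally}, for a pair of adjacent prisms, by the distance-function comparison of Subsection~\ref{secinj} (Propositions~\ref{claim1}--\ref{claim3}); this is the substitute for the global convex-hull argument you were aiming at. The converse direction is handled by showing that any two face triangulations of $K({\bf r})$ differ by flips of flat edges (Corollary~\ref{diffl}, Lemma~\ref{splcn}), and that a flat flip preserves the Epstein--Penner property.
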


Clearly, this lemma implies Lemma \ref{sur}. Moreover, the face decomposition of $K({\bf r})$ is exactly the Epstein--Penner decomposition of $(S_{g,n}, d)$ with decoration defined by ${\bf r}$ and the subdivision $\mathcal K = \bigcup \mathcal K(T)$ is the Epstein--Penner subdivision of~$\R^n$.

\begin{proof}
Let ${\bf r} \in \R^n$ and let $T$ be one of its Epstein--Penner triangulations. Represent $S_{g,n}$ as $\H^2/\Gamma$ and lift $T$ to a triangulation $\hat T$ of $\H^2$. As in Subsection~\ref{EP}, we denote the polar vector to a horosphere $L$ by $\bar l$, the Epstein-Penner convex hull by $C$ and the set of vertices of $C$ by $\mathcal L$. Let $\Delta = A_iA_jA_h$ be a triangle of $\hat T$, $L_i$, $L_j$ and $L_h$ be the horocycles at $A_i$, $A_j$ and $A_h$ defined by ${\bf r}$, i.e. at distances equal to $r_i$, $r_j$, $r_h$ from the canonical ones. The affine plane $M=M(\Delta) \subset \R^{1,2}$ spanned by the points $\overline l_i$, $\overline l_j$ and $\overline l_h$ is a supporting plane of $C$. By Lemma~\ref{EPlm}, $M$ is space-like, which means that its normal $\overline m$ (in the direction of $C$) is time-like. Let $\mathcal L_M = M \cap \mathcal L$. As $M$ is a supporting plane to $C$, for $\overline l \in \mathcal L$ we have $$\langle \overline m, \overline l \rangle =\begin{cases} -1 &\mbox{if } \overline l \in \mathcal L_M,\\ < -1 &\mbox{otherwise.} \end{cases}$$

Now assume that $\R^{1,2} \hookrightarrow \R^{1,3}$ as $\{ \overline x \in \R^{1,3}: x_4=0 \}$ and $\H^2$ is embedded in $\H^3$ respectively. Extend each horocycle to an horosphere. We continue to denote them by $L_i$. Let $\overline n$ be the intersection point of $\ds^3$ with the ray $\overline m + \lambda \overline e_4$, $\lambda >0$.

By construction we have $$\langle \overline n, \overline l \rangle = \begin{cases} -1 &\mbox{if } \overline l \in \mathcal L_M,\\< -1 &\mbox{otherwise.} \end{cases}$$

Let $N=N(\Delta) \subset \H^3$ be the plane obtained from the time-like linear plane in $\R^{1,3}$ 
orthogonal to $\overline n$. From Lemma \ref{scpr} we see that each horosphere $L$ (with $\overline l \in \mathcal L$) lies in the closed halfspace $N_-$ and $N$ is tangent to $L$ if and only if $\overline l \in \mathcal L_M$ (otherwise $N$ does not intersect $L$). We summarize it in the following description (we proved only in one direction, but the converse is clear):

\begin{prop}
\label{EpP}
A triangle $A_iA_jA_h$ is contained in a face of the Epstein--Penner decomposition if and only if all canonical horospheres are on one side from the common tangent plane to the horospheres  $L_i$, $L_j$ and $L_h$.
\end{prop}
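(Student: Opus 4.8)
The plan is to recognize Proposition~\ref{EpP} as the exact de Sitter translation of the Epstein--Penner supporting-plane criterion, carried out through the correspondence $\overline m \leftrightarrow \overline n$ used in the construction above. Recall that $\overline n$ is obtained from a time-like $\overline m \in \R^{1,2}$ by sliding along the ray $\overline m + \lambda \overline e_4$, $\lambda > 0$, until it meets $\ds^3$; since every $\overline l \in \mathcal L$ lies in $\R^{1,2}$ and hence is orthogonal to $\overline e_4$, one has $\langle \overline n, \overline l\rangle = \langle \overline m, \overline l\rangle$ for all such $\overline l$. Thus the linear functional testing membership in the convex hull $C$ is unchanged when we pass from $\R^{1,2}$ to $\H^3$, and the whole argument reduces to converting the relations $\langle \overline m, \overline l\rangle = -1$ and $\langle \overline m, \overline l\rangle < -1$ into geometric statements about the polar plane $N$ by means of part 2 of Lemma~\ref{scpr}.

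For the forward direction there is essentially nothing new to do, as it is precisely the computation carried out above: if $A_iA_jA_h$ lies in a face of the Epstein--Penner decomposition, then the affine plane $M$ through $\overline l_i, \overline l_j, \overline l_h$ is a space-like supporting plane of $C$, so its time-like normal $\overline m$ satisfies $\langle \overline m, \overline l\rangle = -1$ exactly on $\mathcal L_M$ and $\langle \overline m, \overline l\rangle < -1$ elsewhere. Transporting these values to $\overline n$ and applying Lemma~\ref{scpr}, the plane $N$ is tangent to $L$ precisely when $\overline l \in \mathcal L_M$, while every other horosphere lies strictly in the open halfspace $N_-$; in particular all horospheres lie on one side of the common tangent plane $N$ to $L_i, L_j, L_h$.

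For the converse I would run the same chain backwards. Starting from a common tangent plane $N$ to $L_i, L_j, L_h$ with all horospheres on one side, let $\overline n \in \ds^3$ be its polar, chosen with $n_4 > 0$, and project it to $\overline m := \overline n - n_4 \overline e_4 \in \R^{1,2}$. Lemma~\ref{scpr} turns the hypotheses into $\langle \overline n, \overline l\rangle = -1$ for $\overline l \in \{\overline l_i, \overline l_j, \overline l_h\}$ and $\langle \overline n, \overline l\rangle \le -1$ for every $\overline l \in \mathcal L$, and by the pairing invariance the same inequalities hold for $\overline m$. Hence the affine plane $\{\langle \overline m, \cdot\rangle = -1\}$ supports $C$ and contains $\overline l_i, \overline l_j, \overline l_h$, so these three cusps are vertices of a single face of $\partial_t C$, and the geodesic triangle $A_iA_jA_h$ they span, being inscribed in the corresponding convex ideal polygon, is contained in that face.

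The one point that is not purely formal, and which I expect to be the main obstacle, is to verify that the supporting plane recovered in the converse is genuinely space-like, i.e.\ that $\overline m$ is time-like; equivalently $n_4 > 1$. This is what guarantees that the plane belongs to $\partial_t C$ and cuts out an honest Epstein--Penner face rather than merely grazing the light-like boundary $\partial_l C$. I would deduce it from the geometry of $N$: a plane tangent to three distinct horospheres whose centers lie on the circle $\partial_\infty \H^2$ and all on one side cannot coincide with, nor be shallower than, the invariant plane $\H^2$ (whose polar is $\overline e_4$, with fourth coordinate $1$), so it tilts strictly out of $\H^2$ and forces $n_4 > 1$, equivalently $\langle \overline m, \overline m\rangle = 1 - n_4^2 < 0$. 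Finally, since the three tangency equations cut out a line in $\R^{1,3}$ meeting $\ds^3$ in two points, one should note that requiring $n_4 > 0$ selects the unique upper tangent plane, the other solution being its mirror image across $\H^2$ arising from the Fuchsian symmetry.
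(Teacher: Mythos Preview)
Your proposal is correct and follows essentially the same route as the paper: the forward direction is precisely the computation the paper carries out just before stating the proposition (transporting the supporting-plane inequalities from $\overline m$ to $\overline n$ via $\langle \overline n,\overline l\rangle=\langle \overline m,\overline l\rangle$ and invoking Lemma~\ref{scpr}), and for the converse the paper simply writes ``the converse is clear'' without further argument. Your backwards run of the same dictionary is exactly what is intended.

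One remark on the extra point you flag as the main obstacle. You are right that in the converse one must know the recovered $\overline m$ is time-like, but you do not need the somewhat informal geometric picture about $N$ ``tilting out of $\H^2$''. Once you have $\langle \overline m,\overline l\rangle\le -1$ for all $\overline l\in\mathcal L$ with equality at $\overline l_i,\overline l_j,\overline l_h$, the affine plane $\{\langle\overline m,\cdot\rangle=-1\}$ is a supporting plane of $C$ meeting $\partial C$ in a set containing three points of $\mathcal L$; hence it carries one of the polygons of $\partial_t C$, and Lemma~\ref{EPlm} already asserts that every such supporting plane is space-like. This disposes of the issue without any ad hoc estimate on $n_4$.
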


By $B_i$, $B_j$ and $B_h$ denote the tangent points of $N$ with $L_i$, $L_j$ and $L_h$ respectively. We see that the prism $A_iA_jA_hB_hB_jB_i$ is a semi-ideal prism with lateral edges $r_i$, $r_j$ and $r_h$. It follows that the pair $(T, {\bf r})$ is admissible. We construct the complex $K=K(T, {\bf r})$.

It remains to check the convexity. Take two adjacent triangles $\Delta'=A_iA_jA_h$, $\Delta'' = A_jA_hA_g$ of $\hat T$ and corresponding semi-ideal prisms. In the construction above, points $A_i$, $A_j$, $A_h$ and $A_g$ lie in the same plane and the lateral faces of the prisms are not glued. To glue them, we should bend these prisms around the edge $A_jA_h$. The question is in which direction do we bend. 

Clearly, $\overline l_g \in \mathcal L_{M(\Delta')}$ if and only if the plane $N'=N(\Delta')$ coincides with the plane $N''=N(\Delta'')$, which is equivalent to the condition $\phi_i + \phi_g = \pi$ (edge $A_jA_h$ is flat). From now on assume that $N'$ and $N''$ are distinct planes.

Let $Y$ be the intersection point of $A_iA_g$ and $A_jA_h$. Parametrize the geodesic line $A_iA_g$ by length and let $y$ be the coordinate of $Y$. By $\rho$ denote the distance function $\rho_K$ restricted to $A_iA_g$. It has a kink point at $y$ and we need to check that it is concave. Let $\rho_1(x)$ and $\rho_2(x)$ be the distance functions from $A_iA_g$ to the planes $N'$ and $N''$ respectively. Hence, $\rho_1$ coincides with $\rho$ over $(-\infty; y]$ and $\rho_2$ coincides over $[y; +\infty)$. We have not proved yet that $N'$ and $N''$ are ultraparallel to $\H^2$. However, both $A_i$ and $A_g$ are in the same halfspaces $N'_-$ and $N''_-$. Therefore, the whole line $A_iA_g$ belongs to these halfspaces and $\rho_1$, $\rho_2$ have the form~(\ref{pd}). By Proposition~\ref{claim1}, the function $\rho_1(x) - \rho_2(x)$ has constant sign over the segments $(-\infty, y)$ and $(y, +\infty)$. If it is positive for $x>y$, then $\rho$ is concave at $y$ and $K$ is strictly convex at the edge $A_jA_h$.

Consider $x$ approaching $+\infty$. Take a sphere centered at the corresponding point $X \in A_iA_g$ (i.e. the set of points of $\H^3$ equidistant to $X$) tangent to $N''$. This sphere tends to the horosphere $L_g$ at $A_g$ as $x$ approaches $+\infty$. This horosphere belongs to the interior of $N'_-$, hence for some sufficiently large $x$, the sphere at $X$ does not intersect $N'$. It implies that $\rho_2(x)<\rho_1(x)$ and $A_jA_h$ is strictly convex.

%\begin{figure}
%\begin{center}
%\includegraphics[scale=0.2]{Pic 8 Merged v.png}
%\caption{The section orthogonal to the geodesic $A_iA_g$ and its translation in terms of the graphic of distance function.}
%\label{Pic8}
%\end{center}
%\end{figure}

We proved that if $T$ is Epstein-Penner for ${\bf r}$, then $(T, {\bf r})$ is a convex pair. Assume that $T'$ is another face triangulation of $K({\bf r}).$  According to Corollary~\ref{diffl}, $T$ and $T'$ can differ only in flat edges. By Lemma \ref{splcn}, faces of $K$ are ideal polygons, hence $T$ and $T'$ can be connected by a sequence of flips of flat edges. Let $T_k$ be an Epstein--Penner triangulation for $\bf r$ and $T_{k+1}$ be obtained from $T_k$ by flipping an edge $A_jA_h$ to $A_iA_g$. We saw before that an edge $A_jA_h$ between triangles $A_iA_jA_h$ and $A_jA_hA_g$ is flat if and only if all $\bar l_i$, $\bar l_j$, $\bar l_h$, $\bar l_g$ are in the same face of $C$.
This means that then $T_{k+1}$ also is Epstein--Penner for $\bf r$. 
\end{proof}

\subsection{Equivalence to Theorem~\ref{Luo}}
\label{DC}

Here we show that Theorem~\ref{angles} is equivalent to Theorem~\ref{Luo}. We need two facts. The first one is due to Akiyoshi~\cite{Aki}:

\begin{thm}
\label{ak}
For each hyperbolic cusp metric $d$ on $S_{g,n}$ there are finitely many Epstein-Penner triangulations of $(S_{g,n}, d)$.
\end{thm}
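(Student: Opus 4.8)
The plan is to argue entirely within the Minkowski convex hull picture of Subsection~\ref{EP}, reducing the statement to a finiteness count for the subdivision $\R^n = \bigcup_T \mathcal{K}(T)$ identified in Subsection~\ref{secsur}. Since each Epstein--Penner decomposition is the face decomposition of $K(\mathbf{r})$ for $\mathbf{r}$ ranging over one cell $\mathcal K(T)$, and each such decomposition admits only finitely many refining triangulations, it suffices to bound the number of cells of this subdivision. Fix $(S_{g,n},d)=\H^2/\Gamma$ with $\H^2\subset\R^{1,2}$ and set $w_i=e^{r_i}$. If $\overline v^{\,k}_i$ denote the polar vectors for the canonical decoration, then the polar vectors for $\mathbf{r}$ are $w_i\overline v^{\,k}_i$, so the Epstein--Penner convex hull $C(\mathbf r)$ depends on $\mathbf r$ only through the positive scalars $w_i$, linearly in each block. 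The first reduction is scaling invariance: multiplying all $w_i$ by a common $\lambda>0$ scales $C(\mathbf r)$ homothetically and leaves $\partial_t C$ combinatorially unchanged, so the decomposition depends only on the projective point $[w_1:\cdots:w_n]$ in the open simplex $\Delta^{n-1}$, and I would work there.

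Next I would establish local finiteness. By Proposition~\ref{EpP}, a triangle $A_iA_jA_h$ is a face precisely when the common tangent plane to $L_i,L_j,L_h$ leaves all canonical horospheres on one side; the wall separating two triangulations differing by a flip across an edge $A_jA_h$ (between $A_iA_jA_h$ and $A_jA_hA_g$) is exactly the coplanarity locus of $w_i\overline v_i,w_j\overline v_j,w_h\overline v_h,w_g\overline v_g$, i.e. the zero set of an explicit real-analytic (indeed exponential-polynomial) function of $\mathbf r$. Near any fixed decoration the space-like supporting planes of $\partial_t C$ vary in a compact family, so by the $\Gamma$-equivariance and local finiteness recorded in Lemma~\ref{EPlm} only finitely many $\Gamma$-orbits of horoballs can touch them, whence only finitely many such walls meet a neighborhood. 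This shows that every compact subset of $\Delta^{n-1}$ meets finitely many cells, recovering the piecewise analyticity of $\partial\mathcal K(T)$ already noted, but it does not yet bound the total number.

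The main obstacle is the passage from local to global finiteness, since a priori infinitely many cells could accumulate at the ideal ends of $\Delta^{n-1}$. I would control this by compactifying to the closed simplex $\overline\Delta^{n-1}$, whose boundary strata record the degenerations in which some $w_i\to 0$ (the horoball at $A_i$ shrinking into its ideal point) and, projectively, also the opposite regime $r_i\to+\infty$. The heart of the argument is to show that the subdivision extends to $\overline\Delta^{n-1}$ with finitely many cells, by identifying the limiting decomposition along each boundary stratum with an Epstein--Penner decomposition of the surface decorated only at the surviving cusps, and checking that only finitely many cells abut each stratum. This degeneration analysis is exactly what Akiyoshi carries out in~\cite{Aki}. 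The guiding mechanism, which I would make precise, is that after taking logarithms the signed horocyclic distances $\delta_e(\mathbf r)=\delta_e^0+r_i+r_j$ (from Lemma~\ref{scpr}) are affine in $\mathbf r$, and the Epstein--Penner decomposition is the associated weighted Delaunay decomposition; the subdivision of decoration space is then governed by a secondary-fan-type structure, whose cells are finite in number, the $\Gamma$-equivariance and the noncompactness at the cusps being the points that require the genuine work.
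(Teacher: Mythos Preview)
The paper does not actually prove Theorem~\ref{ak}; it attributes the full statement to Akiyoshi~\cite{Aki} and then remarks that for the purposes of the paper only \emph{local} finiteness is needed, sketching a short argument for that weaker fact. So your proposal is being compared against a citation plus a one-paragraph sketch, not a proof.

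For local finiteness, your route via real-analytic walls (coplanarity loci of four horocycle vectors) is valid but more elaborate than what the paper does. The paper's sketch is: on a compact set $R\subset\R^n$ the distance function $\rho_{K(\mathbf r)}$ is bounded below; by Corollary~\ref{trapdist} this bounds the (decorated) length $l_e$ of every upper edge of $K(\mathbf r)$ from above; since the lengths of geodesics between cusps form a discrete set (\cite{Pen}, Lemma~4.1), only finitely many edges, hence finitely many triangulations, can occur. This is more direct than tracking the wall structure.

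For global finiteness, you correctly identify the obstacle (cells could accumulate at the boundary of the simplex) and correctly point to the compactification/degeneration analysis as the substance of Akiyoshi's argument. That is an accurate summary of~\cite{Aki}, but it is not a self-contained proof, and the paper makes no pretense of giving one either: it simply cites the result and moves on. So your proposal is appropriate in spirit, though you are supplying considerably more scaffolding than the paper does for a theorem that the paper treats as a black box.
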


We remark that for our purposes it is enough to establish a weaker and easier fact that the number of triangulations is locally finite. For the sake of completeness we sketch a proof here. For ${\bf r}$ in a compact domain $R \subset \R^n$ the distance function $\rho_K$ of $K=K({\bf r})$ is bounded from below by a constant depending on $R$. By Corollary~\ref{trapdist}, the length of an upper edge of $K$ is bounded from above. But the lengths of geodesics between cusps on $(S_{g,n},d)$ form a discrete set (see e.g.~\cite{Pen}, Lemma 4.1).

Let $d'$ be a polyhedral hyperbolic metric on $S_{g,n}$ and $T$ be its geodesic triangulation. Denote the set of marked points by $\mathcal B=\{B_1, ..., B_n\}$. Take a triangle $B_iB_jB_h$. Clearly, there is a unique up to isometry semi-ideal prism that have $B_iB_jB_h$ as its lower face. Glue all such prisms together and obtain a complex $K(d', T)$ with the lower boundary isometric to $(S_{g,n}, d')$. Gluing isometries are uniquely defined if we fix the horosphere at each upper vertex passing through the respective lower vertex and match them together: one can see that this is the only way of gluing to obtain a complete metric space. %Combining the work of Leibon~\cite{Lei} and Gu et al.~\cite{Lu} the following fact can be concluded:

\begin{lm}
\label{Le}
The complex $K(d', T)$ is convex if and only if $T$ is a Delaunay triangulation of $(S_{g,n}, d')$. Besides, any two convex complexes with isometric lower boundaries are isometric.
\end{lm}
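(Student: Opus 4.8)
The plan is to prove the two assertions of Lemma~\ref{Le} separately, using the distance-function machinery already developed and the Epstein--Penner description obtained in Subsection~\ref{secsur}. For the first assertion, the key observation is that the construction of $K(d', T)$ is, in a precise sense, dual to the construction of $K(T, {\bf r})$ studied throughout this section: there we started from a cusp metric on the upper boundary and erected prisms downward, whereas here we start from a polyhedral metric $d'$ on the lower boundary and erect prisms upward. First I would make this duality explicit by embedding one prism $A_iA_jA_hB_hB_jB_i$ in $\H^3$ and comparing the local convexity condition at an upper edge $A_jA_h$ with the Delaunay condition at the corresponding lower edge $B_jB_h$. Concretely, take two adjacent lower triangles $B_iB_jB_h$ and $B_jB_hB_g$ developed into the supporting plane $\H^2$ of the lower boundary; the associated horospheres $L_i, L_j, L_h, L_g$ at the ideal vertices $A_i, A_j, A_h, A_g$ sit above this plane, and the upper edge $A_jA_h$ is strictly convex precisely when the fourth horosphere $L_g$ lies strictly outside the common tangent plane $N(\Delta')$ to $L_i, L_j, L_h$.

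The bridge to Delaunay is then the following. By Lemma~\ref{scpr}(2), the condition that $L_g$ lie on the far side of $N(\Delta')$ translates into an inequality between scalar products $\langle \overline n, \overline l_g\rangle$ and $-1$, which is exactly the Epstein--Penner supporting-plane inequality of Proposition~\ref{EpP}. Thus I would invoke Proposition~\ref{EpP} together with the remark following Lemma~\ref{ultrapar} (that Delaunay triangulations have compact circumcircles) to identify the convexity of every upper edge of $K(d', T)$ with the statement that $T$ refines the Epstein--Penner decomposition of the \emph{lower} metric $(S_{g,n}, d')$, which by the standard theory of Epstein--Penner is equivalent to $T$ being Delaunay for $d'$. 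The circumscribed circle of a lower triangle $B_iB_jB_h$ corresponds to the common tangent plane $N$, and the empty-disc Delaunay condition corresponds exactly to $L_g$ lying outside $N$, i.e. to strict convexity; the borderline case of a flat edge matches the co-circular (Delaunay non-generic) case. This gives the equivalence in both directions once I check that the gluing described in the statement (matching horospheres through the lower vertices) produces a complete metric and is the unique such gluing.

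For the second assertion I would argue that a convex complex is determined by its lower boundary metric $(S_{g,n}, d')$ together with the combinatorics of a Delaunay triangulation, and the latter is canonically attached to $d'$. By the first part, any convex complex with lower boundary isometric to $(S_{g,n}, d')$ is of the form $K(d', T)$ for a Delaunay triangulation $T$, and each such prism is determined up to isometry by its lower triangle via Corollary~\ref{existence}. Two Delaunay triangulations of the same metric differ only by flips across co-circular edges, i.e. across flat edges of the complex, exactly as in the final paragraph of the proof of Lemma~\ref{sur}; I would reuse that flip argument verbatim to upgrade the pointwise isometry of prisms to a global marked isometry of the two complexes. Thus the complex depends only on $d'$, not on the chosen Delaunay triangulation.

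The main obstacle I expect is the careful sign bookkeeping in the translation between the convexity inequality at the upper edge and the Delaunay emptiness condition at the lower edge, since the two conditions live on opposite boundary components and the scalar-product formulas in Lemma~\ref{scpr} carry sign conventions that must be tracked through the passage from $\R^{1,2}$ to $\R^{1,3}$. The cleanest route is to reduce to a single pair of adjacent prisms, develop them so that all four ideal vertices lie over one plane $\H^2$, and then read off the convexity condition directly from the relative position of $L_g$ and the tangent plane $N(\Delta')$ exactly as in the proof of Lemma~\ref{sur}; this sidesteps the need to recompute any of the trapezoid formulas. The completeness and uniqueness of the gluing is a minor point that follows, as noted in the statement, from the requirement that the canonical horospheres at the upper vertices match, which is the only choice compatible with a complete hyperbolic structure near the cusps.
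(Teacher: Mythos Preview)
Your proposal has a genuine gap: you are conflating the Epstein--Penner machinery (which lives on the \emph{upper} cusp metric) with the Delaunay condition (which lives on the \emph{lower} polyhedral metric). Proposition~\ref{EpP} is a statement about the Epstein--Penner decomposition of the cusp surface $(S_{g,n},d)$; it is not defined for the polyhedral metric $d'$, and the phrase ``Epstein--Penner decomposition of the lower metric $(S_{g,n},d')$'' has no meaning in the paper's setup. More concretely, look at what your ``common tangent plane $N(\Delta')$ to $L_i,L_j,L_h$'' actually is. In the proof of Lemma~\ref{sur} the plane $N(\Delta)$ \emph{is} the lower face plane, and the tangency points are precisely $B_i,B_j,B_h$. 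In your situation you have already developed both lower triangles into the common plane $\H^2$ and chosen the horospheres through $B_i,B_j,B_h,B_g$; all four horospheres are then tangent to $\H^2$, so $L_g$ is never ``strictly outside $N(\Delta')$'' in the sense you intend, and the condition you write down is vacuous. The sign bookkeeping you flag as the main obstacle is not the issue; the issue is that the role of $N$ in Lemma~\ref{sur} does not transport to this dual setting.

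The same conflation recurs in your treatment of the second assertion: the flip argument at the end of the proof of Lemma~\ref{sur} connects two Epstein--Penner triangulations of a fixed cusp metric, not two Delaunay triangulations of a fixed polyhedral metric. These are analogous facts but they require separate proofs; you cannot ``reuse that flip argument verbatim.'' The paper handles both points differently. For the first assertion it invokes Leibon's observation that the intersection angle of the circumscribed circles of adjacent lower triangles equals the dihedral angle of the corresponding upper edge, which gives the equivalence in one line. For the second assertion it cites the Delaunay flip--connectedness result of~\cite{Lu}, and separately sketches a self-contained alternative: define the dual distance function $\sigma_K$ on the lower boundary (of the form $\arctanh(b\cosh(x-a))$) and rerun the arguments of Subsection~\ref{secinj}. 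If you want to salvage your horosphere picture, the cleanest route is not through Proposition~\ref{EpP} but through the hyperboloid lift $\overline l_i=\overline x_{B_i}+\overline e_4$ and the affine plane through $\overline x_{B_i},\overline x_{B_j},\overline x_{B_h}$ in $\R^{1,2}$, which does encode the circumscribed circle; but that argument is not in the paper and would need to be written out in full.
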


%Leibon does not formulate this fact explicitly and as far as his proof is short and provides a useful intuition, we briefly sketch it here: 
%$K(d')$ not depend on a choice of a Delaunay triangulation

\begin{proof}
%Consider the unit ball model for $\H^3$. Let $B_iB_jB_h$ and $B_jB_hB_g$ be two adjacent triangles. Embed the respective semi-ideal prisms in $\H^3$ such that $B_iB_jB_hB_g$ is the horizontal plane. The circumscribed disk of $B_iB_jB_h$ does not contain $B_g$ in the interior if and only if the angle between circumscribed circles of the triangles $B_iB_jB_h$ and $B_jB_hB_g$ is at most $\pi$. The inversion from the south pole with the radius $\sqrt 2$ sends the horizontal plane to the upper hemisphere and each point $B$ to the corresponding point $A$. In particular, it sends the circumscribed circles of $B_iB_jB_h$ and $B_jB_hB_g$ to the circumscribed circles of $A_iA_jA_h$ and $A_jA_hA_g$ in the upper hemisphere. Inversions preserve angles, therefore the intersection angle between the latter two is at most $\pi$. But it is exactly the dihedral angle at the upper edge between two prisms. This observation belongs to Leibon and gives us the first claim of Lemma. Moreover, one can see that if a diagonal switch transforms a Delaunay triangulation to Delaunay, then it is done in a circumscribed quadrangle and in the upper boundary it switches a flat edge. The fact that two Delaunay triangulations of $(S_{g,n}, d)$ can be connected by a sequence of diagonal switches through Delaunay triangulations is proved in~\cite{Lu}.
In~\cite{Lei}, Section 3, Leibon provides a geometric observation showing that the intersection angle between circumscribed circles of two adjacent triangles $B_iB_jB_h$ and $B_jB_hB_g$ is equal to the dihedral angle of the upper edge $A_jA_h$. Clearly, a triangulation is Delaunay if and only if all these intersection angles are at most $\pi$. This gives us the first claim. Besides, if a diagonal switch transforms a Delaunay triangulation to Delaunay, then it is done in an inscribed quadrilateral and the Leibon observation shows that it switches a flat edge in the upper boundary and, thereby, does not change the complex. The fact that two Delaunay triangulations of $(S_{g,n}, d')$ can be connected by a sequence of diagonal switches through Delaunay triangulations is proved in~\cite{Lu}, Proposition 16. This settles the second claim. 

However, we remark that another proof of the second claim follows from similar ideas as our proof of Lemma~\ref{inj}. Indeed, for a point $X\notin \mathcal B$ in the lower boundary of $K=K(d', T)$ let $\sigma_K(X)$ be the length of the segment from $X$ to the upper boundary of $K$ orthogonal to its lower boundary. This is the same as distance function $\rho_K$ in Section~\ref{secinj}, but now defined on the lower boundary. We observe that the restriction of $\sigma_K$ to a geodesic parametrized by length has the form $$\sigma(x)=\arctanh(b\cosh(x-a))$$ for $0<b<1$ and analogues of all propositions for $\rho(x)$ of Section~\ref{secinj} can be obtained for $\sigma(x)$. The only difference is that $\sigma(x)$ is defined over an open bounded segment of $\R$, hence we should keep track on the domains of definition. Nevertheless, it does not provide substantial new difficulties and we omit the details. After that we can see that an ultraparallel prism is defined by its lower boundary and the lengths of lateral edges for non-ideal upper vertices. Therefore, the proof of the second claim can be finished in the same way as the proof of Lemma~\ref{inj}. We note that together with other observations of Subsection~\ref{secinj} it provides an alternative proof of Proposition 16 in~\cite{Lu} and of the fact that each Delaunay triangulation refines the Delaunay decomposition (see~\cite{Lu}, Section 2.4).
\end{proof}

Thus, we denote by $K(d')$ the (unique) convex complex that has $(S_{g,n}, d')$ as its lower boundary.

\begin{lm}
\label{equiv}
Theorem~\ref{Luo} is equivalent to Theorem~\ref{angles}.
\end{lm}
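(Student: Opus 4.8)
The plan is to establish a dictionary between the two existence-and-uniqueness statements using the correspondence $K(d')$ constructed above, which assigns to each polyhedral hyperbolic metric $d'$ on $S_{g,n}$ a unique convex complex whose lower boundary is isometric to $(S_{g,n}, d')$. The central observation is that the curvature $\tilde\kappa_i$ of the inner edge $A_iB_i$ in a complex equals the curvature $\kappa_{d'}(B_i)$ of the corresponding conical point in the lower boundary, since the conical angle $\tilde\omega_i$ at $B_i$ is by definition the sum of the prism dihedral angles around $A_iB_i$. Thus controlling the edge curvatures of the complex is exactly the same as controlling the cone angles of the lower-boundary metric. Both theorems impose the same Gauss--Bonnet-type inequality $\sum_i \kappa'(A_i) > 2\pi(2-2g)$, so the hypotheses match automatically.

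First I would prove the direction ``Theorem~\ref{angles} $\Rightarrow$ Theorem~\ref{Luo}''. Given the starting metric $d$ on $S_{g,n}$ and the target curvature function $\kappa'$, I apply Theorem~\ref{angles} to produce a unique convex complex $K$ with upper boundary $(S_{g,n}, d)$ and edge curvatures $\tilde\kappa_i = \kappa'(A_i)$. Its lower boundary is a polyhedral hyperbolic metric $d'$ with cone angles $2\pi - \kappa'(A_i)$, hence $\kappa_{d'}(B_i) = \kappa'(A_i)$ as required. The remaining task is to show $d'$ is discretely conformally equivalent to $d$ in the sense of Subsection~\ref{Intro}'s definition; this is where the prism geometry enters. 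Using the Epstein--Penner picture from Subsection~\ref{secsur} and the length formulas of Corollary~\ref{trap} and Corollary~\ref{alpha}, the change of decoration ${\bf r}$ should translate precisely into the multiplicative factor $\exp(u(B_i)+u(B_j))$ relating $\sinh(\len_{d}(e)/2)$ and $\sinh(\len_{d'}(e)/2)$, with $u$ determined by ${\bf r}$; the type-(i) moves correspond to changing the refining triangulation within a fixed Epstein--Penner decomposition, and the type-(ii) moves to varying ${\bf r}$. For uniqueness, if two discretely conformal metrics realized the same $\kappa'$, both would yield convex complexes over $(S_{g,n},d)$ with the same edge curvatures, contradicting the uniqueness in Theorem~\ref{angles}.

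For the reverse direction ``Theorem~\ref{Luo} $\Rightarrow$ Theorem~\ref{angles}'', I start from a convex complex $K$ over $(S_{g,n},d)$, read off its lower-boundary metric $d'$, and invoke Theorem~\ref{Luo} to find the unique metric $d''$ discretely conformally equivalent to $d$ with prescribed curvatures $\kappa'$. By Lemma~\ref{Le} this $d''$ corresponds to a unique convex complex $K(d'')$, and I must verify its upper boundary is isometric to $(S_{g,n},d)$ with the correct marking; again this rests on matching the conformal-equivalence moves to decoration changes, so that conformally equivalent lower metrics produce the same upper boundary. Uniqueness transfers back through Lemma~\ref{Le}.

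The main obstacle I anticipate is the precise identification of the edge-length rescaling formula $\sinh(\len/2) \mapsto \exp(u(B_i)+u(B_j))\sinh(\len/2)$ with a shift of the decoration weights $r_i \mapsto r_i + \text{const}$. Establishing this requires combining Corollary~\ref{trap}, which gives $\cosh(a_{12}) = 1 + 2e^{l_{12}-r_1-r_2}$, with the half-angle identity to extract $\sinh^2(a_{12}/2) = e^{l_{12}-r_1-r_2}$, so that $\sinh(a_{12}/2) = e^{(l_{12})/2}\,e^{-(r_1+r_2)/2}$; setting $u(B_i) = -r_i/2$ then produces exactly the claimed factor, with the upper-edge length $l_{12}$ playing the role of the $d$-length and $a_{12}$ the $d'$-length. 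Verifying that this local matching is compatible across triangulation changes --- i.e. that the two notions of ``Delaunay'' and ``Epstein--Penner'' coincide under the lower/upper duality, which is precisely the content guaranteed by the Remark after Lemma~\ref{ultrapar} and by Lemma~\ref{Le} --- is the delicate bookkeeping step, but it introduces no genuinely new geometry beyond the formulas already assembled.
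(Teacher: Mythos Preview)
Your proposal has the right engine --- the identity $\sinh^2(a_{12}/2)=e^{l_{12}-r_1-r_2}$ from Corollary~\ref{trap} is exactly the bridge --- but the framing contains a genuine type error that makes both directions, as written, fail.

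The metric called $d$ in Theorem~\ref{Luo} is a \emph{polyhedral hyperbolic} metric, whereas the metric called $d$ in Theorem~\ref{angles} is a \emph{cusp} metric. These live in different worlds and are not to be identified. In particular your sentence ``show $d'$ is discretely conformally equivalent to $d$'' is ill-formed: discrete conformal equivalence is defined only between polyhedral hyperbolic metrics. Likewise, your identification ``$l_{12}$ plays the role of the $d$-length'' cannot work, because the formula you derived has $e^{l_{12}/2}$, not $\sinh(l_{12}/2)$, so it does not exhibit the upper length as one side of a type-(ii) move. The correct dictionary is one level higher: a single cusp metric corresponds to an \emph{entire discrete conformal class} of polyhedral metrics, via $d'\mapsto$ (upper boundary of $K(d')$). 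What one must actually prove is that two polyhedral metrics $d'$, $d''$ are discretely conformally equivalent if and only if $K(d')$ and $K(d'')$ have isometric upper boundaries. With the \emph{same} $l_{12}$ on both sides, your formula gives
\[
\frac{\sinh(a'_{12}/2)}{\sinh(a''_{12}/2)}=\exp\!\Big(\tfrac{r''_1-r'_1}{2}+\tfrac{r''_2-r'_2}{2}\Big),
\]
so $u(B_i)=(r''_i-r'_i)/2$, which is the paper's computation. Once this bijection is in hand, Theorem~\ref{angles} for the cusp metric $\hat d$ becomes Theorem~\ref{Luo} for any polyhedral metric in the class determined by $\hat d$, and conversely; your outline then goes through, but with the starting datum for Theorem~\ref{Luo} being the \emph{lower} boundary of some $K({\bf r})$, not the cusp metric itself.
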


\begin{proof}
It is enough to show that $d'$ is discretely conformally equivalent to $d''$ if and only if the upper boundaries of $K(d')$ and $K(d'')$ are isometric.

Assume that the upper boundaries of $K(d')$, $K(d'')$ are both isometric to $(S_{g,n}, d)$ for a cusp metric $d$. Let $\mathcal K$ be the set of convex complexes realizing $(S_{g,n}, d)$. Choose a decoration on $(S_{g,n}, d)$ and identify $\mathcal K$ with $\R^n$ using Lemma~\ref{inj} and Lemma~\ref{sur}. First, assume that $K(d'), K(d'') \in \mathcal K(T)$ for a triangulation $T$. By Lemma~\ref{Le}, $T$ is Delaunay for both $d'$ and $d''$. Take $e \in E(T)$ and denote its lengths in $d'$ and $d''$ by $a'$ and $a''$ respectively. By $r'_i$ and $r'_j$ denote the weights of its endpoints in $K(d')$, by $r''_i$ and $r''_j$ in $K(d'')$. Then from Corollary~\ref{trap} we see that $$\sinh\left(\frac{a'}{2}\right)=\sinh\left(\frac{a''}{2}\right)\exp\left(\frac{r''_i-r'_i}{2}+\frac{r''_j-r'_j}{2}\right).$$
Thus, $d'$ is discretely conformally equivalent to $d''$. %with $u(B_i)=\frac{r''_i-r'_i}{2}$ for all $B_i \in \mathcal B$.

Assume that $d'$ and $d''$ are in the different cells $\mathcal K(T')$ and $\mathcal K(T'')$. The decomposition $\mathcal K = \bigcup \mathcal K(T)$ is finite due to Theorem~\ref{ak} and the boundaries of cells $\mathcal K(T)$ are piecewise analytic as subsets of $\R^n$. Then $K(d')$ and $K(d'')$ can be connected by a path in $\mathcal K$ transversal to the boundaries of all cells and intersecting them $m$ times. All intersection points correspond to distinct convex complexes. Denote their lower boundaries metrics by $d_1, \ldots, d_m$. Define also $d_0=d'$, $d_{m+1}=d''$. A segment between $d_i$ and $d_{i+1}$ of the path belongs to $\mathcal K(T_i)$ for some triangulation $T_i$. By Lemma~\ref{Le}, $T_i$ is Delaunay for both $d_i$ and $d_{i+1}$. By the previous argument, they are discretely conformally equivalent. Then so are $d_0$ and $d_{m+1}$. 

In the opposite direction, assume that $d'$ and $d''$ are discretely conformally equivalent and have a common Delaunay triangulation $T$. Then there exists a function $u: \mathcal B \rightarrow \R$ such that for each edge $e$ of $T$ with endpoints $B_i$ and $B_j$ we have $$\sinh\left(\frac{\len_{d'}(e)}{2}\right)=\exp(u(B_i)+u(B_j))\sinh\left(\frac{\len_{d''}(e)}{2}\right).$$ Consider $K(d')$ and $K(d'')$, then $T$ is a face triangulation of both these complexes due to Lemma~\ref{Le}. Metric spaces $(S_{g,n}, d')$ and $(S_{g,n}, d'')$ come with a homeomorphism between them isotopic to identity on $S_{g,n}$ with respect to $\mathcal B$. This allows us to identify the upper boundary metrics of $K(d')$ and $K(d'')$ with elements of the Teichmuller space of hyperbolic cusp metrics on $S_{g,n}$. Choose an horosection at each vertex of the upper boundaries in both $K(d')$, $K(d'')$. Let $r'_i$ and $r''_i$ be the distances from the horosections at $A_i \in \mathcal A$ to $B_i$ in $K(d')$ and $K(d'')$ respectively. We can choose the horosections such that for every~$i$, $\frac{r''_i-r'_i}{2}=u(B_i)$. Then Corollary~\ref{trap} shows that for each $e\in E(T)$ its length in the upper boundary of $K(d')$ is the same as in the upper boundary of $K(d'')$ (with respect to the chosen horosections). Therefore, the upper boundary metrics of $K(d')$ and $K(d'')$ together with the chosen decorations have the same Penner coordinates, hence they are isometric.

The case, when $d'$ and $d''$ are discretely conformally equivalent and do not have a common Delaunay triangulation $T$, is inductively reduced to the last case.
\end{proof}

\section{The variational approach} \label{VA}

In this section we prove Theorem~\ref{angles}. From a function $\kappa'$ we construct a functional $S_{\kappa'}$ such that its critical points in $\R^n$ are precisely complexes with curvatures prescribed by $\kappa'$. Then we show that it is strictly concave and that the Gauss-Bonnet condition on $\kappa'$ implies that it attains the maximal value in $\R^n$.

\subsection{The discrete Hilbert--Einstein functional}
\label{VA1}

For ${\bf r} \in \R^n$ let $T$ be any face triangulation of the convex complex $K(\bf r)$. We introduce \emph{the discrete Hilbert--Einstein functional} over the space of convex complexes $\mathcal K$ identified with $\R^n$:
\begin{equation}
\label{dHE}
S({\bf r}) := -2\vol(K({\bf r})) + \sum\limits_{1\leq i \leq n} r_i \tilde\kappa_i + \sum\limits_{e \in E(T)} l_e\tilde\theta_e.
\end{equation}

The curvatures $\tilde \kappa_i$ and exterior angles $\tilde \theta_e$ are measured in $K({\bf r})$. The value $S({\bf r})$ does not depend on the choice of $T$, because two face triangulations of $K(\bf r)$ are different only in flat edges, for which $\tilde \theta_e=0$.

Consider a function $\kappa': \mathcal A \rightarrow \R$. We write $\kappa'_i$ instead of $\kappa'(A_i)$. Define \emph{the modified discrete Hilbert-Einstein functional}:
\begin{equation}
\label{mdHE}
S_{\kappa'}({\bf r}) := S({\bf r})-\sum\limits_{1\leq i \leq n} r_i \kappa'_i.
\end{equation}

\begin{lm}\label{ppd}
For every ${\bf r} \in \R^n$, $S({\bf r})$ is twice continuously differentiable and
\begin{equation}\label{1pd}
\frac{\partial S}{\partial r_i} = \tilde\kappa_i.
\end{equation}
\end{lm}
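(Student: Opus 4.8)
The statement contains two assertions: that $S$ is $C^2$ and that its gradient is $(\tilde\kappa_i)$. The plan is to treat these separately, reducing the derivative formula to the Schläfli formula and handling regularity by a gluing argument across cells. Recall from Section~\ref{SP} that $\R^n$ is partitioned into the cells $\mathcal K(T)$, and on the interior of each cell the face triangulation $T$ is fixed, so (by Corollary~\ref{existence}) the shape of every prism of $K({\bf r})$ — and hence $\vol(K)$, every dihedral angle $\tilde\phi_e$ and $\tilde\omega_i$, and every curvature $\tilde\kappa_i$ — depends real-analytically on ${\bf r}$. Thus $S$ is real-analytic on each open cell, and it remains to (a) compute $\partial S/\partial r_i$ there, and (b) verify that the result glues to a $C^2$ function across the walls between cells.

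For (a) the main tool is the Schläfli differential formula for $K({\bf r})$, decorated by the fixed canonical horospheres. Truncating $K$ along these horospheres replaces each cusp by a horospherical Euclidean face; since a totally geodesic plane through an ideal vertex meets the horosphere orthogonally, every edge of such a face carries the constant dihedral angle $\pi/2$ and contributes nothing to the variation, while the lower edges $B_iB_j$ are flat (dihedral angle $\pi$ in $K$) and likewise contribute nothing. Only the interior edges $A_iB_i$ (truncated length $r_i$, angle $\tilde\omega_i$) and the upper edges $A_iA_j$ (length $l_e$, angle $\tilde\phi_e$) survive, and the formula reads
\[ d\vol(K) = -\tfrac12\Big(\sum_i r_i\, d\tilde\omega_i + \sum_{e\in E(T)} l_e\, d\tilde\phi_e\Big). \]
The horocyclic lengths furnished by Lemma~\ref{h3} and Corollary~\ref{alpha} are exactly what is needed to make this horospherical bookkeeping rigorous. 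Crucially, the $l_e$ are geodesic lengths in the fixed metric $d$ measured between the fixed canonical horocycles, so $dl_e=0$.

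Substituting this into $S = -2\vol(K) + \sum_i r_i(2\pi-\tilde\omega_i) + \sum_e l_e(\pi-\tilde\phi_e)$ and using $dl_e=0$, the terms $\sum_i r_i\, d\tilde\omega_i + \sum_e l_e\, d\tilde\phi_e$ produced by $-2\,d\vol(K)$ cancel against those obtained by differentiating $r_i\tilde\omega_i$ and $l_e\tilde\phi_e$; what remains is precisely $\sum_i(2\pi-\tilde\omega_i)\,dr_i=\sum_i\tilde\kappa_i\,dr_i$, which is~(\ref{1pd}). The combination defining $S$ was engineered exactly so that this cancellation occurs — the discrete analogue of the fact that the Hilbert--Einstein functional has the curvatures as its gradient.

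The delicate point is (b), which is where I expect the real work to lie. Since $\tilde\theta_e=0$ on every flat edge, $S$ does not depend on the choice of face triangulation and is continuous on $\R^n$; moreover its gradient $(\tilde\kappa_i)$ is continuous, because the cone angles $\tilde\omega_i$ of the lower boundary vary continuously with ${\bf r}$ even through a flip. Combined with analyticity on each cell, this already gives $S\in C^1$ with $\nabla S=(\tilde\kappa_i)$. Upgrading to $C^2$ is the main obstacle: one must show that the Hessian $\partial\tilde\kappa_i/\partial r_j$, analytic on each cell, extends continuously across the walls. A wall separating $\mathcal K(T')$ and $\mathcal K(T'')$ corresponds to a flip of an edge that is flat along the wall, and the geometric weight attached to that edge (the quantity controlling its contribution to the Hessian) degenerates exactly there, so the two analytic expressions for the Hessian are forced to agree along the wall; equivalently, the two prism configurations related by the flip determine the same second-order jet of $S$. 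Verifying this matching is where the explicit trapezoid formulas of Subsection~\ref{PC1} enter, and once the Hessian is known to be continuous on $\R^n$ the function $S$ is $C^2$, completing the proof.
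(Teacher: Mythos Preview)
Your argument is correct and follows the same route as the paper: reduce the gradient formula~(\ref{1pd}) to the generalized Schl\"afli identity (the paper applies it prism by prism and sums, you apply it once to the truncated complex---equivalent, since the lower and horospherical edges carry constant dihedral angles and drop out either way), and postpone the $C^2$ verification across cell walls to the explicit Hessian computation. The paper does exactly this, forward-referencing Lemma~\ref{concave}(iv) for the continuity of the second derivatives, and your heuristic that the flipped edge's cotangent weight degenerates along the wall is precisely what that lemma confirms.
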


\begin{proof}
Assume that ${\bf r}$ is an interior point of $\mathcal K(T)$ for some triangulation $T$. Then $T$ is the face decomposition of the upper boundary of $K({\bf r'})$  for every ${\bf r}'$ sufficiently close to ${\bf r}$. Hence, combinatorics of complexes does not change in some neighborhood of ${\bf r}$ and every total dihedral angle can be written as the sum of dihedral angles in the same prisms. Clearly, a dihedral angle in a non-degenerated prism is a smooth function of its edges. Moreover, by generalized Schl\"affli's differential formula (see \cite{Ri3}, Theorem 14.5) for a prism $P=A_iA_jA_hB_hB_jB_i \subset K$ we have
$$-2d {\rm vol} (P) = r_i d \omega_i + r_j d \omega_j + r_h d \omega_h + l_{jh} d \phi_i + l_{ih} d \phi _j + l_{ij} d \phi_h.$$

Summing these equalities over all prisms we obtain 
$$-2d\vol(K({\bf r}))=  - \sum\limits_{1\leq i \leq n} r_i d \tilde\kappa_i - \sum\limits_{e \in E(T)} l_e d\tilde\theta_e.$$

Thus, $$dS({\bf r})= \sum\limits_{1\leq i \leq n}  \tilde\kappa_i d r_i + \sum\limits_{e \in E(T)} \tilde\theta_e d l_e =\sum\limits_{1\leq i \leq n}  \tilde\kappa_i d r_i.$$

This gives (\ref{1pd}). Since dihedral angles in a prism are smooth functions of edges, we obtain that $S$ is twice continuously differentiable at ${\bf r}$.

Now consider the case when ${\bf r}$ belongs to the boundary of some $\mathcal K(T)$.
% First, we show that this boundary is piecewise-analytic. Remind that $\mathcal K'(T)$ is the set of all ${\bf r}'$ for which the pair $(T, {\bf r}')$ is admissible and ${\bf r}$ is an inner point of $\mathcal K'(T)$ (as $K$ does not contain singular prisms). Then for every $e \in E(T)$ consider the total dihedral angle of a complex $(T, {\bf r}')$ as a function of ${\bf r}'$. It is analytic over the interior of $\mathcal K'(T)$. Hence, the condition $\tilde\phi_e = \pi$ is analytic in a neighborhood of ${\bf r}$. The boundary of $\mathcal K(T)$ consists of different pieces corresponded to different flat edges of $T$ and is piecewise analytic.
Let ${\bf e_i}$ be a coordinate vector. As the boundary of $\mathcal K(T)$ is piecewise analytic, ${\bf r} +\lambda{\bf e_i}\in\mathcal K(T)$ for some $T$ and  all small enough $\lambda$. Therefore, we can compute the directional derivative of $S({\bf r})$ in the direction ${\bf e_i}$ using the formula (\ref{1pd}). For every coordinate direction they are continuous, hence $S$ is continuously differentiable. In Lemma~\ref{concave} we show that the derivatives of $\kappa_i$ are also continuous, which will finish the proof that $S$ is twice continuously differentiable.
\end{proof}

\begin{crl} \label{ppdmf}
For every ${\bf r} \in \R^n$, $S_{\kappa'}({\bf r})$ is twice continuously differentiable and
%\begin{equation}\label{1pd}
$$\frac{\partial S_{\kappa'}}{\partial r_i} = \tilde\kappa_i-\kappa'_i.$$
%\end{equation}
\end{crl}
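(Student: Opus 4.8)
The plan is to read everything off directly from Lemma~\ref{ppd}. The functional $S_{\kappa'}$ is obtained from $S$ by subtracting the term $\sum_{1\leq i \leq n} r_i \kappa'_i$, and the whole point is that this term is a \emph{linear} function of ${\bf r} \in \R^n$: the values $\kappa'_i = \kappa'(A_i)$ are fixed real constants determined by the prescribed curvature data, not functions of ${\bf r}$. A linear function is smooth on all of $\R^n$, hence in particular twice continuously differentiable, so it contributes no regularity issues whatsoever.

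First I would invoke Lemma~\ref{ppd} to record that $S({\bf r})$ is twice continuously differentiable on $\R^n$. Since the class of twice continuously differentiable functions is closed under subtraction, the defining equation~(\ref{mdHE}), namely $S_{\kappa'} = S - \sum_i r_i \kappa'_i$, immediately yields that $S_{\kappa'}$ is twice continuously differentiable as well. This is the entire content of the first assertion.

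For the derivative I would differentiate~(\ref{mdHE}) term by term. By linearity of the partial derivative, $\frac{\partial S_{\kappa'}}{\partial r_i} = \frac{\partial S}{\partial r_i} - \frac{\partial}{\partial r_i}\Big(\sum_{1\leq j \leq n} r_j \kappa'_j\Big) = \tilde\kappa_i - \kappa'_i$, where the first summand is supplied by formula~(\ref{1pd}) of Lemma~\ref{ppd} and the second comes from differentiating the linear perturbation, only the $j=i$ term surviving.

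There is no genuine obstacle at this stage: the corollary is an immediate consequence of the preceding lemma together with the triviality that the subtracted term is linear in ${\bf r}$. All the real analytic work — the smoothness of the volume and edge-length contributions, the passage across the cell boundaries of the subdivision $\mathcal K = \bigcup \mathcal K(T)$ via the piecewise analyticity of those boundaries, the generalized Schl\"affli differential formula, and the continuity of the derivatives of the curvatures $\tilde\kappa_i$ (forward-referenced to Lemma~\ref{concave}) — is already packaged inside Lemma~\ref{ppd}, so nothing needs to be reproved here.
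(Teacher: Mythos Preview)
Your proposal is correct and matches the paper's approach: the paper states this as a corollary of Lemma~\ref{ppd} with no separate proof, precisely because $S_{\kappa'}$ differs from $S$ by the linear term $\sum_i r_i\kappa'_i$, so both regularity and the derivative formula follow immediately.
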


Corollary \ref{ppdmf} implies that if ${\bf r}$ is a critical point of $S_{\kappa'}$, then for all $i$, $\tilde \kappa_i=\kappa'_i$. In order to find it we investigate the second partial derivatives of $S_{\kappa'}$. It is sufficient to calculate them for a fixed triangulation $T$.

\begin{lm}
\label{concave}
Define $X_{ij}:=\frac{\partial^2 S}{\partial r_i\partial r_j} = \frac{\partial \kappa_i}{\partial r_j}$. Then for every $1 \leq i \leq n$:

(i) $X_{ii}<0,$

(ii) for $i \neq j$, $X_{ij} > 0,$

(iii) for every $1 \leq i \leq n$, $\sum\limits_{1\leq j \leq n} X_{ij} < 0$,

(iv) the second derivatives are continuous at every point ${\bf r} \in \R^n$. In particular, this implies that $X_{ij}=X_{ji}$.
\end{lm}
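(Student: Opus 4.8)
The plan is to reduce everything to a single prism and to read off the signs from the Euclidean \emph{link triangle} at an ideal vertex. Fixing a face triangulation $T$ of $K(\mathbf r)$, near a point where the combinatorics is locally constant one has $\tilde\kappa_i=2\pi-\sum_P\omega_i^P$, the sum over prisms $P$ containing the inner edge $A_iB_i$, so that $X_{ij}=-\sum_P\partial\omega_i^P/\partial r_j$. For a prism $P=A_iA_jA_h$ the canonical horosphere at $A_i$ meets $P$ in a Euclidean triangle whose sides are the horocyclic arcs cut out by the three faces: the upper arc $\alpha_i^{\mathrm{up}}$, opposite the angle $\omega_i$, and the two lateral arcs $\alpha_{ij},\alpha_{ih}$, opposite the dihedral angles at the upper edges $A_iA_h$ and $A_iA_j$. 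Using Lemma~\ref{ideal} and Corollary~\ref{alpha} together with the decoration rule $l_{ij}=l^0_{ij}-r_i-r_j$, these arcs become explicit, $(\alpha_i^{\mathrm{up}})^2=C_ie^{2r_i}$ and $\alpha_{ij}^2=c_{ij}e^{2r_j}+e^{-2r_i}$, with positive constants $C_i,c_{ij}$ depending only on the fixed metric $d$.

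Next I would differentiate the Euclidean law of cosines. Feeding the arc formulas into the standard angle-variation identities ($\alpha_{ij}\,\partial\omega_i/\partial\alpha_{ij}=-\cot\phi_{ij}^P$ and $\partial\omega_i/\partial\alpha_i^{\mathrm{up}}=\alpha_i^{\mathrm{up}}/(2\,\mathrm{Area}_P)$) yields the clean expressions
\[
\frac{\partial\omega_i^P}{\partial r_j}=-\frac{c_{ij}e^{2r_j}}{\alpha_{ij}^2}\cot\phi_{ij}^P,\qquad
\frac{\partial\omega_i^P}{\partial r_i}=\frac{(\alpha_i^{\mathrm{up}})^2}{2\,\mathrm{Area}_P}+e^{-2r_i}\Big(\frac{\cot\phi_{ij}^P}{\alpha_{ij}^2}+\frac{\cot\phi_{ih}^P}{\alpha_{ih}^2}\Big),
\]
where $\phi_{ij}^P$ is the dihedral angle of $P$ at the upper edge $A_iA_j$. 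For (ii) I sum the first formula over the two prisms $P',P''$ sharing $A_iA_j$; since these are glued along the lateral trapezoid over $A_iA_j$ they share both $\alpha_{ij}$ and the coefficient $c_{ij}e^{2r_j}$, so
\[
X_{ij}=\frac{c_{ij}e^{2r_j}}{\alpha_{ij}^2}\big(\cot\phi_{ij}^{P'}+\cot\phi_{ij}^{P''}\big)
=\frac{c_{ij}e^{2r_j}}{\alpha_{ij}^2}\cdot\frac{\sin\tilde\phi_{ij}}{\sin\phi_{ij}^{P'}\sin\phi_{ij}^{P''}}>0,
\]
because convexity forces the total upper angle $\tilde\phi_{ij}<\pi$. (If $A_i,A_j$ are not joined by an edge the sum is empty and $X_{ij}=0$.)

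For (iii) I would assemble the full row sum $\sum_jX_{ij}=X_{ii}+\sum_{j\sim i}X_{ij}$ prism by prism, and here lies the step I expect to be the main obstacle: a cancellation. Combining, in each prism, the diagonal term $-\partial\omega_i^P/\partial r_i$ with the off-diagonal contributions $+\,c_{ij}e^{2r_j}\alpha_{ij}^{-2}\cot\phi_{ij}^P$, the coefficient of $\cot\phi_{ij}^P$ becomes $1-2e^{-2r_i}/\alpha_{ij}^2$; the constant parts $\cot\phi_{ij}^P+\cot\phi_{ih}^P$ then cancel the area term exactly, via the identity $\frac{(\alpha_i^{\mathrm{up}})^2}{2\,\mathrm{Area}_P}=\cot\phi_{ij}^P+\cot\phi_{ih}^P$ coming from $\omega_i+\phi_{ij}^P+\phi_{ih}^P=\pi$. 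What survives is
\[
\sum_jX_{ij}=-2e^{-2r_i}\sum_{e\ni A_i}\frac{\sin\tilde\phi_e}{\alpha_e^2\,\sin\phi_e^{P'}\sin\phi_e^{P''}}<0,
\]
the sum over the upper edges at $A_i$, again by $\tilde\phi_e\le\pi$, with strict inequality because at least one edge incident to $A_i$ is strictly convex. Getting the decoration sign convention right so that the arc formulas hold, and spotting this cancellation, are the delicate points.

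Then (i) is immediate: $X_{ii}=\sum_jX_{ij}-\sum_{j\ne i}X_{ij}<0$ by (ii) and (iii). For (iv), each $X_{ij}$ is real-analytic in $\mathbf r$ while the combinatorics is fixed, and across a cell boundary the flipped edge is flat ($\tilde\phi_e=\pi$), so its $\sin\tilde\phi_e$-weighted contribution vanishes and the expressions match continuously; the symmetry $X_{ij}=X_{ji}$ follows directly from the edge-symmetric form (the factors $c_{ij}e^{2r_j}/\alpha_{ij}^2$ and $c_{ij}e^{2r_i}/\alpha_{ji}^2$ agree after clearing denominators), which simultaneously completes the deferred claim that $S$ is twice continuously differentiable.
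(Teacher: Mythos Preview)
Your overall architecture is the same as the paper's---reduce to the Euclidean link triangle on the canonical horosphere at $A_i$, differentiate the cosine rule, and group the result edge by edge---but your execution contains a genuine error that happens, by luck, to leave the signs intact.

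The mistake is the ``decoration rule'' $l_{ij}=l^0_{ij}-r_i-r_j$. In this setup the upper edge lengths $l_{ij}$ are measured between the \emph{fixed} canonical horocycles on $(S_{g,n},d)$; they do not depend on ${\bf r}$ at all. The coordinates $r_i$ are the lateral edge lengths $A_iB_i$, not horocycle shifts, so Corollary~\ref{alpha} reads $\alpha_{ij}^2=e^{r_j-r_i-l_{ij}}+e^{-2r_i}$ with $l_{ij}$ constant. In particular the upper arc $\lambda=\alpha_i^{\mathrm{up}}$ is a constant determined by $d$ alone (Lemma~\ref{ideal}), not $C_ie^{2r_i}$. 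Consequently the ``area term'' $(\alpha_i^{\mathrm{up}})^2/(2\,\mathrm{Area}_P)$ in your formula for $\partial\omega_i^P/\partial r_i$ should not be there, and the cancellation you highlight as ``the main obstacle'' is a phantom: with the correct arc formulas the $\pm\alpha_{ij}^2$ pieces cancel directly, giving
\[
\sum_{j}\frac{\partial\omega_i^P}{\partial r_j}
= e^{-2r_i}\Bigl(\frac{\cot\phi_{ij}^P}{\alpha_{ij}^2}+\frac{\cot\phi_{ih}^P}{\alpha_{ih}^2}\Bigr),
\]
so the row sum has prefactor $e^{-2r_i}$, not $2e^{-2r_i}$. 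Your off-diagonal term is likewise off: the correct coefficient is $\tfrac12 e^{r_j-r_i-l_e}/\alpha_{ij}^2$ rather than $c_{ij}e^{2r_j}/\alpha_{ij}^2$. None of this changes the signs in (i)--(iii), because both your formulas and the correct ones share the structure ``positive factor times $(\cot\phi^{P'}+\cot\phi^{P''})$'', and the convexity/simply-connected-face argument is the right one for strictness; but the derivation you wrote is not valid as it stands.

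Two smaller remarks. Your parenthetical that $X_{ij}=0$ when $A_i,A_j$ share no edge is correct and in fact sharpens the paper's stated (ii) to $X_{ij}\ge0$; only the weak inequality is needed for negative definiteness. And for the symmetry $X_{ij}=X_{ji}$, once you use the correct arc formula the check is the identity $e^{r_j-r_i}\alpha_{ji}^2=e^{r_i-r_j}\alpha_{ij}^2=e^{-l_e}+e^{-r_i-r_j}$.
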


Note that a matrix satisfying the properties (i)--(iii) is a particular case of so-called \emph{diagonally dominated} matrices.

\begin{proof}
Let $A_1A_2A_3B_3B_2B_1$ be a semi-ideal prism. The solid angle at the vertex $A$ cuts a Euclidean triangle out of the canonical horosphere at $A$ with side lengths equal to $\alpha_{12}$, $\alpha_{13}$ and $\lambda$; its respective angles are $\phi_{12}$, $\phi_{13}$ and $\omega_1$. Then by the cosine law we have $$\cos(\omega_1) = \frac{\alpha^2_{12}+\alpha^2_{13} - \lambda^2}{2\alpha_{12}\alpha_{13}}.$$ We calculate the derivatives of $\omega_1$:

$$\frac{\partial \omega_1}{\partial \alpha_{12}} = -\frac{\cot(\phi_{12})}{\alpha_{12}},~~~~~~\frac{\partial \omega_1}{\partial \alpha_{13}} = -\frac{\cot(\phi_{13})}{\alpha_{13}}.$$

Calculate the derivatives of $\alpha_{12}$ from Corollary~\ref{alpha}:

$$\frac{\partial \alpha_{12}}{\partial r_1} = \frac{-\alpha^2_{12} - e^{-2r_1}}{2\alpha_{12}},~~~~~~\frac{\partial \alpha_{12}}{\partial r_2} = \frac{\alpha^2_{12} - e^{-2r_1}}{2\alpha_{12}}.$$

%If the ideal face is fixed then this prism is uniquely determined by lengths $r_1$, $r_2$ and $r_3$. 
Consider a deformation of this prism fixing the upper face. Then

\begin{equation}
\label{ii}
%$$
\frac{\partial \omega_1}{\partial r_1} = \frac{\partial \omega_1}{\partial \alpha_{12} } \frac{\partial \alpha_{12}}{\partial r_1} + \frac{\partial \omega_1}{\partial \alpha_{13} }  \frac{\partial \alpha_{13}}{\partial r_1}=
%$$
\end{equation}
$$= \frac{\cot(\phi_{12})}{2\alpha^2_{12}}(\alpha^2_{12} + e^{-2r_1}) + \frac{\cot(\phi_{13})}{2\alpha^2_{13}}(\alpha^2_{13} + e^{-2r_1}),$$

\begin{equation}
\label{ij}
%$$
\frac{\partial \omega_1}{\partial r_2} = \frac{\partial \omega_1}{\partial \alpha_{12} } \frac{\partial \alpha_{12}}{\partial r_2} = \frac{\cot(\phi_{12})}{2\alpha^2_{12}}(-\alpha^2_{12} + e^{-2r_1}),\\
%$$
\end{equation}
$$
\frac{\partial \omega_1}{\partial r_3} = \frac{\partial \omega_1}{\partial \alpha_{13} } \frac{\partial \alpha_{13}}{\partial r_3} = \frac{\cot(\phi_{13})}{2\alpha^2_{13}}(-\alpha^2_{13} + e^{-2r_1}).
$$

Now consider a complex $K = K(T, {\bf r})$. Let $E^{or}(T)$ be the set of oriented edges of $T$: every edge $e \in E(T)$ gives rise to two oriented edges in $E^{or}(T)$. By $E^{orp}_i(T) \subset E^{or}(T)$ denote the set of oriented edges starting at $A_i$, but ending not in $A_i$. By $E^{orl}_i(T) \subset E^{or}(T)$ denote the set of oriented loops from $A_i$ to $A_i$ (thereby, every non-oriented loop is counted twice). By $E^{or}_i$ denote the union $E^{orp}_i(T) \cup E^{orl}_i(T).$ For an oriented edge $\vec e \in E^{or}_i(T)$ denote by $\alpha_{\vec e}$ the length of the arc of horosphere at $A_i$ between $A_iB_i$ and $\vec e$. To calculate $\frac{\partial \tilde{\omega}_i}{\partial r_i}$ we consider $\tilde{\omega}_i$ as the sum of angles in all prisms incident to $A_i$ and take their derivatives. If there are no loops among the upper edges of a prism, then this prism makes a contribution of the form~(\ref{ii}). If there are some loops, the derivative is obtained by summing a contribution of the form~(\ref{ii}) with contributions of the form~(\ref{ij}). Combining together the summands containing the terms $\alpha_{\vec e}$ we get

%Clearly, $$\frac{\partial \tilde{\omega}_i}{\partial r_i} = \sum\limits_{\vec e \in E^{or}_i(T)} \frac{\partial \tilde{\omega}_i}{\partial \alpha_{\vec e}}\frac{\partial \alpha_{\vec e}}{\partial r_i}.$$

%Clearly, only oriented edges in $E^{or}_i(T) \cup E^{orl}_i(T)$ make contributions in $\frac{\partial \tilde \omega_i}{\partial r_i}$. An edge $e \in E_i(T)$ makes a contribution of the form~\ref{ii}. An edge $e \in E'_i(T)$ makes a contribution obtained by summing the form~\ref{ii} and~\ref{ij}. We get

$$\frac{\partial \tilde \omega_i}{\partial r_i} = -X_{ii} =  \sum\limits_{{\vec e} \in E^{orp}_i(T)} \frac {\alpha^2_{\vec e} + e^{-2r_i}}{2\alpha^2_{\vec e}} (\cot\phi_{\vec e+} +\cot\phi_{\vec e-}) +$$ $$+ \sum\limits_{{\vec e} \in E^{orl}_i(T)} \frac{e^{-2r_i}}{\alpha^2_{\vec e}} (\cot\phi_{\vec e+} +\cot\phi_{\vec e-}),$$ where $\phi_{\vec e+}$ and $\phi_{\vec e-}$ are the dihedral angles at $\vec e$ in two prisms containing $\vec e$.

For every $e \in E(T)$ we have $$\phi_{\vec e+} +\phi_{\vec e-} = \tilde \phi_e \leq \pi,$$ where $e$ is $\vec e$ forgetting orientation. Hence $(\cot\phi_{\vec e+} +\cot\phi_{\vec e-}) \geq 0$ and $\frac{\partial \omega_i}{\partial r_i}=- X_{ii} \geq 0$. Also, equality here means that the total dihedral angle of every edge starting at $A_i$ is equal to $\pi$. But in this case we obtain a non-simply connected face of $K$, which is impossible by Lemma~\ref{splcn}. Similarly, for $i \neq j$ denote by $E^{orp}_{ij}(T) \subset E^{orp}_i$ the set of all oriented edges starting at $A_i$ and ending at $A_j$. Then using Corollary~\ref{alpha}

$$\frac{\partial \tilde\omega_i}{\partial r_j} = - X_{ij} =  \sum\limits_{{\vec e} \in E^{orp}_{ij}(T)} \frac { e^{-2r_i} - \alpha^2_{\vec e}}{2\alpha^2_{\vec e}} (\cot\phi_{\vec e+} +\cot\phi_{\vec e-}) =$$ $$= - \sum\limits_{{\vec e} \in E^{orp}_{ij}(T)} \frac { e^{r_j-r_i-l_e}}{2\alpha^2_{\vec e}} (\cot\phi_{\vec e+} +\cot\phi_{\vec e-}) < 0.$$

From this we obtain for every $i$,

$$\sum\limits_{1 \leq j \leq n}\frac{\partial \tilde\omega_i}{\partial r_j} =\sum\limits_{1 \leq j \leq n} - X_{ij} =  \sum\limits_{{\vec e} \in E^{or}_i(T)} \frac{e^{-2r_i}}{\alpha^2_{\vec e}}  (\cot\phi_{\vec e+} +\cot\phi_{\vec e-}),$$ which is greater than zero for similar reasons. It finishes the proof of Lemma~\ref{concave}.
\end{proof}

\begin{crl}
The functions $S$ and $S_{\kappa'}$ are strictly concave over $\R^n$.
\end{crl}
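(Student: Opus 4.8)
The plan is to deduce strict concavity directly from the sign conditions on the Hessian established in Lemma~\ref{concave}. Recall that a twice continuously differentiable function is strictly concave on $\R^n$ provided its Hessian is negative definite at every point. Since Lemma~\ref{concave}(iv) guarantees that the second derivatives are continuous and that $X_{ij}=X_{ji}$, it suffices to show that for each ${\bf r}\in\R^n$ the symmetric matrix $X=(X_{ij})$ is negative definite.

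First I would observe that properties (i)--(iii) together express strict diagonal dominance with a negative diagonal. Indeed, (i) gives $X_{ii}<0$ and (ii) gives $X_{ij}>0$ for $i\neq j$, so that $\sum_{j\neq i}|X_{ij}|=\sum_{j\neq i}X_{ij}$; combining this with (iii), which states $X_{ii}+\sum_{j\neq i}X_{ij}<0$, yields $\sum_{j\neq i}|X_{ij}|<-X_{ii}=|X_{ii}|$ for every $i$.

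Next I would invoke the Gershgorin circle theorem. As $X$ is symmetric, its eigenvalues are real, and each lies in an interval $[\,X_{ii}-R_i,\ X_{ii}+R_i\,]$ with $R_i=\sum_{j\neq i}|X_{ij}|$. By the previous step $R_i<|X_{ii}|=-X_{ii}$, hence $X_{ii}+R_i<0$; therefore every eigenvalue is strictly negative and $X$ is negative definite. Since this holds at every point of $\R^n$, the function $S$ is strictly concave.

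Finally, for $S_{\kappa'}$ I would note from its definition~(\ref{mdHE}) that $S_{\kappa'}({\bf r})-S({\bf r})=-\sum_i r_i\kappa'_i$ is linear in ${\bf r}$ and so has vanishing Hessian. Thus $S_{\kappa'}$ and $S$ share the same negative definite Hessian, and $S_{\kappa'}$ is strictly concave as well. I do not expect any genuine obstacle here: all the real work was already carried out in Lemma~\ref{concave}, and what remains is the routine linear-algebra fact that a symmetric, strictly diagonally dominant matrix with negative diagonal is negative definite, the only mild point being to state the Gershgorin estimate cleanly.
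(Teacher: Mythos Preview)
Your proof is correct and follows essentially the same approach as the paper: both deduce negative definiteness of the Hessian from the diagonal-dominance properties (i)--(iii) of Lemma~\ref{concave}. The only cosmetic difference is that the paper verifies ${\bf r}^T X {\bf r}<0$ by the explicit identity
\[
{\bf r}^T X {\bf r}=-\sum_{i<j}X_{ij}(r_i-r_j)^2+\sum_i r_i^2\sum_j X_{ij},
\]
whereas you invoke Gershgorin; both are standard one-line arguments and neither buys anything the other does not.
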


\begin{proof}
We show that the Hessian $X$ of $S$ is negatively definite over $\R^n$:
$${\bf r}^T X {\bf r} = \sum\limits_{i=1}^n X_{ii}r_i^2 + \sum\limits_{1\leq i < j \leq n} 2X_{ij} r_ir_j = -\sum\limits_{1\leq i < j \leq n}X_{ij}(r_i-r_j)^2 + \sum\limits_{i=1}^n r_i^2 \sum\limits_{j=1}^n X_{ij} <0.$$
\end{proof}

\begin{rmk}
One can see that the concativity of $S$ over a single cell $\mathcal K(T)$ follows from the fact that the volume of a prism is a concave function of its dihedral angles (proved in~\cite{Lei}) by applying of the Legendre transformation.
\end{rmk}

We see that $S_{\kappa'}({\bf r})$ has at most one maximum point. We want to prove that it exists. To this purpose we study what happens with complexes when the absolute values of some coordinates of ${\bf r}$ are large. First, we explore the case when all coordinates are sufficiently negative. Second, we deal with the case when there is at least one sufficiently large positive coordinate. Then we combine these results and get the desired conclusion.

\subsection{The behavior of $S_{\kappa'}$ near infinity}
\label{VA2}

\begin{lm}
\label{inf1}
For every $\varepsilon > 0$ there exists $C_1 > 0$ such that if for some $i$ we have $r_i < -C_1$ in ${\bf r} \in \R^n$, then $\tilde\omega_i < \varepsilon$  in $K({\bf r})$.
\end{lm}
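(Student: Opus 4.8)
The plan is to reduce the statement to a uniform estimate on the single angle that each prism incident to the edge $A_iB_i$ contributes at its inner vertex, and then to sum these contributions. Recall from the proof of Lemma~\ref{concave} that the solid angle of a prism $A_iA_jA_hB_hB_jB_i$ at $A_i$ cuts out of the canonical horosphere a Euclidean triangle with sides $\alpha_{ij}$, $\alpha_{ih}$, $\lambda$, where $\omega_i$ is the angle opposite $\lambda$ and $\lambda$ is the length of the arc of the canonical horocycle at $A_i$ cut off by the upper face $A_iA_jA_h$. Thus
$$\cos(\omega_i) = \frac{\alpha_{ij}^2+\alpha_{ih}^2-\lambda^2}{2\alpha_{ij}\alpha_{ih}}.$$
Since $\tilde\omega_i$ is the total angle at $B_i$, it equals the sum of all such $\omega_i$ over the angular sectors at $A_i$ (one per prism containing $A_iB_i$, counted with multiplicity for loops), so it suffices to make each $\omega_i$ small in a summable way.

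The first key input is Corollary~\ref{alpha}, which gives $\alpha_{ij}^2 = e^{r_j-r_i-l_{ij}}+e^{-2r_i} > e^{-2r_i}$, hence $\alpha_{ij} > e^{-r_i}$, and likewise $\alpha_{ih} > e^{-r_i}$: both sides adjacent to $\omega_i$ blow up as $r_i \to -\infty$. The second input is that $\lambda$ is a \emph{canonical} horocyclic arc and therefore does not involve the heights $r_j$. Combining them,
$$1-\cos(\omega_i) = \frac{\lambda^2-(\alpha_{ij}-\alpha_{ih})^2}{2\alpha_{ij}\alpha_{ih}} \leq \frac{\lambda^2}{2\alpha_{ij}\alpha_{ih}} \leq \frac{\lambda^2}{2}\,e^{2r_i},$$
which tends to $0$; with the elementary inequality $\omega_i \leq \pi\sqrt{(1-\cos\omega_i)/2}$ on $[0,\pi]$ this yields the clean per-sector bound $\omega_i \leq \tfrac{\pi}{2}\,\lambda\, e^{r_i}$ (and in particular $\cos(\omega_i)\to 1$, so $\omega_i$ is near $0$, not near $\pi$).

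Finally I would sum over all sectors at $A_i$. The step needing care is \emph{uniformity}: a priori both the combinatorics and the number of prisms meeting $A_iB_i$ vary with $\mathbf r$, so multiplying a per-prism bound by a count of prisms is awkward. The clean resolution is that the arcs $\lambda$ of the upper faces around $A_i$ are consecutive pieces of the canonical horocycle and partition it exactly once, so $\sum \lambda = L_i$, the fixed finite length of the canonical horocycle at $A_i$. Hence
$$\tilde\omega_i = \sum \omega_i \leq \frac{\pi}{2}\,e^{r_i}\sum \lambda = \frac{\pi}{2}\,L_i\, e^{r_i},$$
and it suffices to take $C_1$ so large that $\tfrac{\pi}{2}\,e^{-C_1}\max_i L_i < \varepsilon$. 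I expect this telescoping of the upper arcs to be the crux: it delivers the required uniformity directly, so that neither a bound on the number of prisms nor the finiteness of Epstein--Penner triangulations (Theorem~\ref{ak}) is needed here.
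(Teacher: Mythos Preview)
Your argument is correct and shares its geometric core with the paper: both start from the Euclidean horospherical triangle at $A_i$, use Corollary~\ref{alpha} to get $\alpha_{\vec e} > e^{-r_i}$, and observe that the opposite side $\lambda$ (the arc of the canonical horocycle in the upper face) depends only on $(S_{g,n},d)$, not on ${\bf r}$. The divergence is in how uniformity over the varying triangulation is obtained. The paper bounds each sector angle crudely by $\varepsilon/(6(n+2g-2))$ and then multiplies by a combinatorial bound on the number of incident triangles coming from the Euler characteristic (there are $2(n+2g-2)$ triangles in total, hence at most $6(n+2g-2)$ incidences at any cusp). Your route instead exploits that the per-sector bound is \emph{linear in $\lambda$}, so summing over sectors telescopes to the fixed horocycle length $L_i$; this sidesteps any count of prisms and makes the estimate independent of the combinatorics, exactly as you anticipated. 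Either argument suffices; yours is a shade cleaner and yields the explicit quantitative bound $\tilde\omega_i \leq \tfrac{\pi}{2}L_i\,e^{r_i}$.
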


\begin{proof}
Fix $\varepsilon > 0$. Recall that for $\vec e \in E^{or}_i(T)$ ending at $A_j$ (not necessarily different from $A_i$) Lemma~\ref{alpha} gives $$\alpha^2_{\vec e} = e^{r_j-r_i-l_{e}}+e^{-2r_i}.$$

Hence, $$\alpha^2_e \geq e^{-2r_i}.$$

Consider two consecutive edges $\vec e_1$ and $\vec e_2 \in E^{or}_i(T)$. Together with the line $A_iB_i$ they cut a Euclidean triangle out of the canonical horosphere at $A_i$ with the side length $\alpha_{\vec e_1}$, $\alpha_{\vec e_2}$ and $\lambda$. If $r_i < -C_1$, then both lengths $\alpha_{\vec e_1}$ and $\alpha_{\vec e_2}$ are at least $e^{C_1}$ and $\lambda$ is bounded from above by the total length of the canonical horocycle at $A_i$ on $(S_{g,n}, d)$. The angle $\omega$ between sides of lengths $\alpha_{\vec e_1}$ and $\alpha_{\vec e_2}$ in this triangle (which is the dihedral angle of $A_iB_i$ in the prism containing $\vec e_1$, $\vec e_2$ and $A_iB_i$) decreases as $C_1$ grows. We choose large enough $C_1>0$ such that if $r_i < -C_1$, then the angle at $A_iB_i$ in every such triangle is less than $\varepsilon/(6(n+2g-2))$.

Note that the number of triangles incident to one cusp is bounded from above by three times the total number of triangles of $T$, which can be calculated from the Euler characteristic and is equal to $2(n+2g-2)$. Therefore, the total dihedral angle $\tilde\omega_i < \varepsilon$.
\end{proof}

\begin{lm}
\label{D}
For every $\varepsilon > 0$ and $C_1>0$ there exists $C_2 > 0$ such that if for some $i$ we have $r_i \geq C_2$ and for every $j$ we have $r_j \geq -C_1$ in ${\bf r}\in \R^n$, then at every point $x \in S_{g, n}$ the value of distance function $\rho_K(x) \geq \varepsilon$, where $S_{g, n}$ is identified with the upper boundary of $K=K(\bf r)$.
\end{lm}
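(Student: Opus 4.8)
The plan is to bound $\rho_K$ from below on each face separately and then to argue that a single large weight forces \emph{every} face to be deep. First I would reduce to faces. For $x$ lying over a face $\Pi$ of $K({\bf r})$, the lateral faces of the prisms meet the lower boundary orthogonally, so the perpendicular from $x$ to the lower boundary stays in the prism over $\Pi$ and lands on its lower face; as in formula~(\ref{pd}) this gives $\rho_K(x)=\dist(x,N_\Pi)\geq\dist(M_\Pi,N_\Pi)=:\rho(\Pi)$, where $M_\Pi$ and $N_\Pi$ carry the upper and lower faces of $\Pi$ in a development to $\H^3$. Since every non-cusp point lies over some face, it suffices to show $\rho(\Pi)\geq\varepsilon$ for every face $\Pi$.

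Second I would check that a face incident to a vertex of large weight is deep. Develop $\Pi$ so that a vertex $A_m$ of maximal weight among its vertices is sent to $\infty$ in the upper half-space model with $M_\Pi=\{y=0\}$. Then $N_\Pi$ is tangent to the horosphere at $A_m$ through $B_m$, that is, a hemisphere of radius $\varrho=z_0e^{-r_m}$, while its offset from $M_\Pi$ is bounded below in terms of the fixed geometry of $(S_{g,n},d)$; hence $\sinh\rho(\Pi)$ grows like $e^{r_m}$ and exceeds $\sinh\varepsilon$ once $r_m$ is larger than a constant depending only on $\varepsilon$ and $(S_{g,n},d)$. The same deepening is visible on the boundary edges of $\Pi$, for which Corollary~\ref{trap} and Corollary~\ref{trapdist} give $\sinh^2\rho_{ij}=e^{r_i+r_j-l_{ij}}$. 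If instead all vertices of $\Pi$ have large weight, its lower plane is tangent to horospheres lying uniformly deep below the canonical ones and is deep for the same reason.

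The crux, and what I expect to be the main obstacle, is the combinatorial claim that for $C_2$ large every face of $K({\bf r})$ is incident to a vertex of large weight. For this I would pass to the Epstein--Penner convex hull $C$ of Subsection~\ref{secsur}: the decorated light-like vectors of the $A_i$-orbit are $e^{-r_i}$ times the canonical ones, so for $r_i\geq C_2$ they lie within distance $\sim e^{-r_i}$ of the origin of $\R^{1,2}$, and since the $\Gamma$-orbit is dense on the light cone they do so in a set of directions that grows dense as $C_2\to\infty$. Working $\Gamma$-equivariantly there are only finitely many cells to control, and the hypothesis $r_j\geq-C_1$ keeps the remaining vertices within a distance of the origin bounded in terms of $C_1$ and $(S_{g,n},d)$. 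Thus, once $C_2$ is large enough (depending on $\varepsilon$ and $C_1$), any edge joining two vertices of non-large weight would have a near-origin $A_i$-vertex strictly below the tangent plane through its endpoints, contradicting Proposition~\ref{EpP}; hence every edge, and so every face, of $\partial_tC$ carries a large-weight vertex. Combining this domination with the depth estimate of the previous paragraph yields $\rho(\Pi)\geq\varepsilon$ for all faces, and therefore $\rho_K(x)\geq\varepsilon$ at every $x$. The delicate part is making the density argument quantitative so that a single threshold $C_2=C_2(\varepsilon,C_1)$ works uniformly in the combinatorial type of $K({\bf r})$, for which I would invoke Akiyoshi's finiteness (Theorem~\ref{ak}) to reduce to finitely many possible types.
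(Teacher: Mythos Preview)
Your strategy is substantially more elaborate than the paper's, and it carries real gaps that the paper's argument avoids entirely.

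The paper does \emph{not} analyse the face decomposition of $K({\bf r})$ at all. It proves two elementary facts: (a) $\rho_K$ is $1$-Lipschitz on $(S_{g,n},d)$, i.e.\ $\rho_K(x)\geq\rho_K(y)-d(x,y)$; and (b) inside the horodisk $D(j,t)=\{x:sd(x,G_j)\leq t\}$ one has $\rho_K(x)\geq r_j-t$. With $t=-C_1-\varepsilon$ the union $D=\bigcup_{j\neq i}D(j,t)$ already has $\rho_K\geq\varepsilon$ by (b) and the hypothesis $r_j\geq-C_1$. The complement $S_{g,n}\setminus D$ is, apart from the cusp at $A_i$, compact, so $p:=\sup\{d(x,J_i):x\notin D\}<\infty$ depends only on $(S_{g,n},d)$, $C_1$ and $\varepsilon$. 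Setting $C_2=\varepsilon+p$, any $x\notin D$ lies within $p$ of a point $y\in J_i$ where $\rho_K(y)\geq r_i\geq C_2$, and (a) gives $\rho_K(x)\geq\varepsilon$. That is the whole proof; no combinatorics, no Epstein--Penner geometry, no Akiyoshi.

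Your argument, by contrast, rests on two claims that are not established. In step~2 you assert that if $A_m$ is placed at $\infty$ then the Euclidean offset of the centre of the hemisphere $N_\Pi$ from $M_\Pi$ is bounded below by constants of $(S_{g,n},d)$ alone. But that centre moves with ${\bf r}$ (it is determined by tangency to \emph{all three} decorated horospheres, two of whose radii you do not control beyond $r_j\geq-C_1$), and you do not show the bound is uniform. The edge formula $\sinh^2\rho_{ij}=e^{r_i+r_j-l_{ij}}$ bounds the distance from an upper \emph{edge line} to the lower plane, which is \emph{larger} than $\rho(\Pi)$, so it does not give what you need.

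Step~3 is the more serious gap. The statement ``every face is incident to a large-weight vertex once $C_2$ is large'' may well be true, and your convex-hull heuristic is on the right track: a face with only bounded-weight vertices would have a supporting plane $\langle\bar m,\cdot\rangle=-1$ with $\bar m$ of bounded norm, whence any $A_i$-vector of norm $\sim e^{-C_2}$ would satisfy $\langle\bar m,\bar l\rangle>-1$ and lie outside $C$. But turning this into a proof requires a uniform bound on $\|\bar m\|$ over all admissible faces, which is exactly where you invoke Akiyoshi; you then still owe the reader the computation that the finitely many face types give a common threshold. Your phrasing in terms of edges and ``tangent plane through its endpoints'' is also not right: two light-like vectors do not determine a supporting plane, and Proposition~\ref{EpP} concerns triangles, not edges. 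None of this is needed: the Lipschitz-plus-compactness argument handles everything in a few lines.
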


\begin{proof}
Let $J_i$ be the canonical horodisk at $A_i$ on $(S_{g,n}, d)$ and $G_i$ be its boundary. Below $sd(x, G)$ means the signed distance from a point to an horosphere on $(S_{g,n}, d)$. Our proof is based on the following simple propositions:

\begin{prop}\label{D1}
If $x, y \in S_{g,n}$, then ${\rho_K(x)\geq \rho_K(y)-d(x,y)}.$
\end{prop}

\begin{proof}
Let $\psi$ be a shortest geodesic arc connecting $x$ and $y$ in $(S_{g,n}, d)$. Define $N$ to be the union of segments in $K$ orthogonal to the lower boundary with one endpoint on $\psi$ and another one on the lower boundary. Then $N$ can be developed to $\H^3$. Let $M$ be the plane in $\H^3$ orthogonal to the lower endpoints of $N$. Clearly, $$\dist_{\H^3}(x, M) \geq \dist_{\H^3}(y, M)-\dist_{\H^3}(x,y)\geq \dist_{\H^3}(y, M)-d(x,y).$$ On the other hand, it is straightforward that $\dist_{\H^3}(x, M)=\rho_K(x)$ and $\dist_{\H^3}(y, N) = \rho_K(y)$.
\end{proof}

\begin{prop}\label{D2}
Let $t \in \R$ and $D(j, t) := \{x \in S_{g,n} \mid sd(x, G_j) \leq t\}$. If $x \in D(j, t)$, then $\rho_K(x) \geq r_j-t.$
\end{prop}

\begin{proof}
The set $D(j, t)$ is a horodisk centered at $A_j$. Let $T$ be a face triangulation of $K$. First, we consider the case when $t$ is small enough, so $D(j, t)$ is contained in the union of triangles of $T$ incident to $A_j$. If $x \in D(j, t)$, then in this case there is a triangle $A_iA_jA_h$ containing $x$. Develop the prism with this triangle to $\H^3$ and let $M$ be the plane passing through the lower boundary. The horodisk $D(j, t)$ is extended to the horoball $E$ in the development and $r_j-t$ is the signed distance from $E$ to $M$. We have $x \in E$, therefore $\rho_K(x)=\dist_{\H^3}(x, M)\geq r_j-t.$

If $D(j, t)$ does not meet this condition, then consider sufficiently small $t_0 \leq t$ such that $D(j, t_0)$ does. For each $x \in D(j, t)$ there exists $y \in D(j, t_0)$ such that $d(x,y)\leq t-t_0$ and $\rho_K(y)\geq r_j-t_0$. Then the desired bound follows from Proposition~\ref{D1}.
\end{proof}

Define $t:=-C_1-\e$ and $D:=\cup_{j \neq i} D(j, t)$, where $D(j,t)$ is defined in Proposition~\ref{D2}. Then this proposition implies that if $x \in D$, then ${\rho_K(x)\geq \e}$.

Define $p := \sup\{d(x, J_i): x \in S_{g,n}\backslash D\}.$ Note that $0 \leq p < \infty$. Indeed, if $x \in J_i$, then $d(x, J_i)=0$. But the closure of $S_{g,n}\backslash (D\cup J_i)$ is compact (possibly empty).

Take $C_2=\e+p\geq \e > 0$. If $x \in J_i$, then $\rho_K(x) \geq r_i \geq \e$ due to Proposition~\ref{D2}. If $x \notin D$, then there exists $y \in J_i$ such that $d(x,y) \leq p$ and $\rho_K(y)\geq r_i$. Then by Proposition~\ref{D1} we obtain $\rho_K(x) \geq \e$. This finishes the proof.
\end{proof}

From Corollary~\ref{trapdist} we see that

\begin{prop}
For every $\varepsilon > 0$ there exists $C > 0$ such that if the distance $\rho_{e}$ from an edge $e$ in the upper boundary of a complex $K$ to the lower boundary is greater than $C$, then $a_e < \varepsilon$, where $a_e$ is the length of the corresponding lower edge.
\end{prop}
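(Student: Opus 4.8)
The plan is to deduce this directly from Corollary~\ref{trapdist}, which already encodes the entire geometric content. The upper edge $e$ together with its orthogonal projection to the lower boundary spans a semi-ideal trapezoid $A_1A_2B_2B_1$, one of the lateral faces appearing in the prism decomposition of $K$; by Lemma~\ref{ultrapar} every prism of a convex complex is ultraparallel, so this trapezoid is ultraparallel and Corollary~\ref{trapdist} applies to it. Here the lower edge is $B_1B_2$ of length $a_e=a_{12}$, and I first need to identify the quantity $\rho_e$ in the statement with the inter-line distance $\rho_{12}$ of the trapezoid.

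The key observation is that $\rho_e = \rho_{12}$. Indeed, $e$ joins two ideal vertices, hence is a complete bi-infinite geodesic, and the restriction of the distance function $\rho_K$ to $e$ is distance-like of the form~(\ref{pd}), namely $\arcsinh(\sinh(\rho_{12})\cosh(x-a))$, where $\sinh(\rho_{12})$ is the hyperbolic sine of the distance between the two ultraparallel lines carrying the upper and lower edges. This function attains its infimum $\arcsinh(\sinh(\rho_{12}))=\rho_{12}$ at the foot $x=a$ of the common perpendicular. Since $e$ is the full geodesic (so $x$ ranges over all of $\R$), this foot lies on $e$, and therefore the distance $\rho_e$ from the edge to the lower boundary equals $\rho_{12}$.

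With this identification, Corollary~\ref{trapdist} gives
$$\cosh(a_e)=1+\frac{2}{\sinh^2(\rho_e)}.$$
The right-hand side is strictly decreasing in $\rho_e$ and tends to $1$ as $\rho_e\to\infty$. Hence, given $\varepsilon>0$, I would choose $C>0$ so large that $1+2/\sinh^2(C)<\cosh(\varepsilon)$; then for any edge with $\rho_e>C$ we get $\cosh(a_e)<\cosh(\varepsilon)$, and since $\cosh$ is increasing on $[0,\infty)$ this yields $a_e<\varepsilon$, as desired.

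There is no serious obstacle here: the statement is essentially a reformulation of Corollary~\ref{trapdist} together with the limiting behavior of $\cosh$. The only point requiring a moment's care is the identification $\rho_e=\rho_{12}$, i.e.\ verifying that the infimal distance from the edge to the lower boundary is genuinely attained along $e$; this is where the fact that an edge between cusps is a complete geodesic (rather than a proper subsegment) is used.
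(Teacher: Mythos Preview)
Your proposal is correct and matches the paper's approach exactly: the paper gives no proof at all, merely prefacing the proposition with ``From Corollary~\ref{trapdist} we see that'', so your argument is precisely the intended one with the details filled in. Your care in identifying $\rho_e$ with the trapezoid distance $\rho_{12}$ (using that an edge between cusps is a complete geodesic) and in invoking Lemma~\ref{ultrapar} to ensure ultraparallelness are appropriate clarifications that the paper leaves implicit.
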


Next proposition is straightforward:

\begin{prop}
For every $\varepsilon > 0$ there exists $\delta > 0$ such that if in a hyperbolic triangle every edge length is less than $\delta$, then the sum of its angles is greater than $\pi - \varepsilon$.
\end{prop}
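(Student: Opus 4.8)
The plan is to invoke the Gauss--Bonnet theorem for hyperbolic triangles, which asserts that if $\alpha$, $\beta$, $\gamma$ are the angles of a geodesic triangle $\triangle$ in $\H^2$, then
$$\alpha + \beta + \gamma = \pi - \mathrm{area}(\triangle).$$
Consequently the desired inequality $\alpha+\beta+\gamma > \pi - \varepsilon$ is exactly equivalent to $\mathrm{area}(\triangle) < \varepsilon$, and the whole proposition reduces to showing that a hyperbolic triangle with all three edges shorter than $\delta$ has arbitrarily small area once $\delta$ is chosen small enough.

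First I would observe that such a triangle is contained in a single geodesic ball of radius $\delta$. Indeed, fix one vertex $A$; the two edges incident to $A$ have length less than $\delta$, so the other two vertices lie in the closed ball $B(A, \delta)$, and $A$ lies there trivially. Since geodesic balls in $\H^2$ are convex, the geodesic convex hull of the three vertices---which is the triangle itself---is contained in $B(A, \delta)$.

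Next I would bound the area of the triangle by the area of this enclosing ball. The area of a hyperbolic disk of radius $\delta$ equals $2\pi(\cosh\delta - 1)$, a quantity that tends to $0$ as $\delta \to 0^+$. Hence, given $\varepsilon > 0$, I would choose $\delta > 0$ so small that $2\pi(\cosh\delta - 1) < \varepsilon$. Then any triangle with all edges shorter than $\delta$ satisfies $\mathrm{area}(\triangle) \le 2\pi(\cosh\delta - 1) < \varepsilon$, and the Gauss--Bonnet identity above immediately yields $\alpha+\beta+\gamma > \pi - \varepsilon$.

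Since every step is an elementary monotone comparison, there is no genuine obstacle here; the only point deserving a moment's care is the containment of the triangle in one small ball, which rests on the geodesic convexity of hyperbolic balls. Alternatively, one could prove the statement by a direct computation with the hyperbolic law of cosines, expanding $\cosh$ and $\sinh$ to second order in the edge lengths and recovering the Euclidean angle sum $\pi$ in the limit of vanishing edges; but the area argument is shorter and avoids any explicit estimates, which is presumably why the statement is labeled straightforward.
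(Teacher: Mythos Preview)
Your proof is correct. The paper itself gives no proof at all---it simply labels the proposition ``straightforward'' and moves on---so there is nothing to compare against; your Gauss--Bonnet argument (reducing the angle-sum bound to an area bound, then containing the triangle in a small convex ball) is a clean and standard way to supply the omitted details.
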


Combining three last facts together we obtain

\begin{crl}
\label{inf2}
For every $\varepsilon > 0$ and $C_1>0$ there exists $C_2 > 0$ such that if for some $i$ we have $r_i \geq C_2$ and for every $j$ we have $r_j \geq -C_1$ in ${\bf r}\in \R^n$, then in $K({\bf r})$ $$\sum\limits_{1 \leq i \leq n} \tilde\omega_i \geq 2\pi(n+2g-2) -\varepsilon.$$
\end{crl}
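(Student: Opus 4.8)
The plan is to reduce the claimed lower bound on $\sum_i \tilde\omega_i$ to an upper bound on the total area of the lower boundary, and then to make that area small by chaining the three facts just recorded. The starting observation is that the total conical angle $\tilde\omega_i$ at $B_i$ equals the sum of the angles at $B_i$ of all lower triangles incident to $B_i$; summing over $i$ therefore gives $\sum_i \tilde\omega_i = \sum_t \sigma_t$, where $\sigma_t$ is the angle sum of a lower triangle $t = B_iB_jB_h$ and the sum runs over all $F$ triangles of a face triangulation $T$. By the Euler characteristic computation already used in the proof of Lemma~\ref{inf1}, $F = 2(n+2g-2)$, so it suffices to force every angle sum $\sigma_t$ to exceed $\pi - \varepsilon/F$: summing then yields $\sum_i \tilde\omega_i > \pi F - \varepsilon = 2\pi(n+2g-2) - \varepsilon$, which is exactly the assertion.

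I would then feed in the three facts in decreasing order of tolerance. First, apply the last (``straightforward'') proposition with tolerance $\varepsilon/F$ to obtain $\delta > 0$ such that any hyperbolic triangle with all sides shorter than $\delta$ has angle sum greater than $\pi - \varepsilon/F$. Next, apply the proposition derived from Corollary~\ref{trapdist} with tolerance $\delta$ to obtain $C > 0$ such that whenever the distance $\rho_e$ from an upper edge to the lower boundary exceeds $C$, the corresponding lower edge has length $a_e < \delta$. Finally, apply Lemma~\ref{D} with its $\varepsilon$ taken to be $C+1$ (and with the given $C_1$) to obtain $C_2 > 0$ so that, under the hypotheses $r_i \geq C_2$ for some $i$ and $r_j \geq -C_1$ for all $j$, the distance function satisfies $\rho_K(x) \geq C+1$ at every point $x$ of the upper boundary.

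The one point requiring a short argument is the passage from ``$\rho_K \geq C+1$ pointwise'' to ``$\rho_e > C$ for every upper edge.'' Here I would use the explicit form of the distance function along an upper edge: restricted to the complete geodesic $A_iA_j$ (both endpoints being cusps), $\rho_K$ is a single distance-like function $\arcsinh(b\cosh(x-a))$ with $b = \sinh(\rho_e)$, whose minimum is attained at the interior point $x = a$ and equals $\rho_e$. Since $x = a$ lies on the edge, Lemma~\ref{D} gives $\rho_e = \rho_K(a) \geq C+1 > C$. Consequently every lower edge satisfies $a_e < \delta$, every lower triangle has angle sum above $\pi - \varepsilon/F$, and the summation of the first paragraph delivers the stated bound.

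I expect no serious obstacle, since the geometric content is entirely contained in Lemma~\ref{D} and Corollary~\ref{trapdist}, both already in hand. The only care needed is the bookkeeping of the three tolerances ($\varepsilon/F$, then $\delta$, then $C$) and the elementary identification of $\rho_e$ with the minimum of the distance-like function along the edge.
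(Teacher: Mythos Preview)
Your proposal is correct and follows exactly the approach the paper intends: the paper's own proof is the single line ``Combining three last facts together we obtain,'' and you have simply spelled out that chaining (tolerance $\varepsilon/F$, then $\delta$, then $C$) together with the identity $\sum_i \tilde\omega_i = \sum_t \sigma_t$ and the Euler count $F = 2(n+2g-2)$. Your extra justification that $\rho_e$ equals the minimum of $\rho_K$ along the edge is fine, though note it could be shortened to the observation $\rho_e = \inf_{x\in e}\rho_K(x) \geq C+1$ since the edge is a bi-infinite geodesic.
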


Now we are able to prove that $S_{\kappa'}$ attains its maximal point at $\R^n$.

\begin{lm}
Consider a cube $Q$ in $\R^n$: $Q = \{ {\bf r} \in \R^n: \max(|r_i|) \leq q \}$. If 
\begin{equation}
\label{kappa}
\sum_{1\leq i\leq n}\kappa'_i > 2\pi(2-2g),
\end{equation}
then for sufficiently large $q$, the maximum of $S_{\kappa'}({\bf r})$ over $Q$ is attained in the interior of $Q$.
\end{lm}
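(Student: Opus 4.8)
The plan is to argue by contradiction. Since $S_{\kappa'}$ is continuous (indeed $C^1$ by Corollary~\ref{ppdmf}) on the compact cube $Q$, it attains its maximum at some ${\bf r}^*$, and I want to show that for $q$ large this point cannot lie on $\partial Q$. The whole argument rests on the first-order conditions at a constrained maximum together with the identity $\partial S_{\kappa'}/\partial r_i = \tilde\kappa_i - \kappa'_i$. Before fixing $q$, I would fix the relevant small parameter: set $\eta := 2\pi - \max_i \kappa'_i > 0$ (positive since each $\kappa'_i < 2\pi$ by the hypotheses of Theorem~\ref{angles}) and $\delta := \sum_i \kappa'_i - 2\pi(2-2g) > 0$ (positive by~(\ref{kappa})), and choose $\varepsilon < \min(\eta, \delta)$. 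Feeding this $\varepsilon$ into Lemma~\ref{inf1} produces a constant $C_1$, and then feeding $\varepsilon$ and $C_1$ into Corollary~\ref{inf2} produces $C_2$; finally I take $q > \max(C_1, C_2)$.

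The first step is to rule out the lower face, and in fact to get the uniform bound $r^*_j \geq -C_1$ for \emph{all} $j$. I look at the coordinate $k$ realizing $\min_j r^*_j$. If $r^*_k < -C_1$, then Lemma~\ref{inf1} gives $\tilde\omega_k < \varepsilon$, hence $\partial S_{\kappa'}/\partial r_k = \tilde\kappa_k - \kappa'_k > (2\pi - \varepsilon) - \max_i\kappa'_i = \eta - \varepsilon > 0$. This is incompatible with the constrained-maximum conditions: if $-q < r^*_k < q$ the partial would have to vanish, and if $r^*_k = -q$ it would have to be $\leq 0$ (while $r^*_k = q$ is impossible for a negative value). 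Hence $r^*_k \geq -C_1$, so every $r^*_j \geq -C_1$.

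The second step uses the sum of the partials to rule out the upper face. At a maximum over $Q$ with no coordinate equal to $-q$, each $\partial S_{\kappa'}/\partial r_i$ is either $0$ (for an interior coordinate) or $\geq 0$ (for a coordinate equal to $+q$), so $\sum_i \partial S_{\kappa'}/\partial r_i \geq 0$. Suppose now some $r^*_i = q \geq C_2$. Since all $r^*_j \geq -C_1$, Corollary~\ref{inf2} applies and yields $\sum_i \tilde\omega_i \geq 2\pi(n+2g-2) - \varepsilon$. Using $\tilde\kappa_i = 2\pi - \tilde\omega_i$ I compute
\[
\sum_i \frac{\partial S_{\kappa'}}{\partial r_i} = 2\pi n - \sum_i \tilde\omega_i - \sum_i \kappa'_i \leq 2\pi(2-2g) + \varepsilon - \sum_i \kappa'_i = \varepsilon - \delta < 0,
\]
contradicting $\sum_i \partial S_{\kappa'}/\partial r_i \geq 0$. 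Therefore no coordinate equals $+q$ either, and ${\bf r}^*$ lies in the interior of $Q$.

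The main obstacle is not any single estimate but the bookkeeping of constants: Corollary~\ref{inf2} requires the a priori lower bound $r_j \geq -C_1$ on \emph{all} coordinates, which is strictly stronger than merely ``no coordinate equals $-q$''. The device that bridges this gap is the minimum-coordinate argument of the first step, which upgrades the first-order inequalities on $\partial Q$ into the uniform bound $r^*_j \geq -C_1$; the order in which $\varepsilon$, then $C_1$, then $C_2$, then $q$ are chosen is exactly what makes the two near-infinity statements combine cleanly, while~(\ref{kappa}) enters only through the strict positivity of $\delta$.
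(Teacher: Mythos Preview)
Your proof is correct and follows essentially the same approach as the paper: choose constants from Lemma~\ref{inf1} and Corollary~\ref{inf2}, rule out small coordinates via the first-order condition at a single index, then use the total-angle inequality from Corollary~\ref{inf2} to rule out large coordinates. The only cosmetic difference is that in the second step you sum all the partials and contradict $\sum_i \partial S_{\kappa'}/\partial r_i \geq 0$, whereas the paper splits the indices into $I=\{i: r^0_i=q\}$ and $J=[n]\setminus I$, uses $\tilde\omega_j=\mu_j$ for $j\in J$, and then exhibits a single $i\in I$ with $\partial S_{\kappa'}/\partial r_i<0$; the two arguments are equivalent.
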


\begin{proof}
Let $\mu_i:=2\pi-\kappa'_i$ and $\mu:=\min(\mu_i: 1\leq i \leq n)$. The condition~(\ref{kappa}) can be rewritten as 
$$2\pi(n+2g-2)>\sum_{1\leq i \leq n}\mu_i.$$
 
Take $C_1$ from Lemma \ref{inf1} for $\varepsilon = \mu$ and $C_2$ from Corollary \ref{inf2} for $C_1$ and $\e=\e_0$ where $$0< \varepsilon_0 < 2\pi(n+2g-2)-\sum_{1\leq i \leq n}\mu_i.$$ Let $q>\max\{C_1, C_2\}$. The cube $Q$ is convex and compact, $S$ is concave, therefore $S$ reaches its maximal value over $Q$ at some point ${\bf r}^0 \in Q$. Suppose that ${\bf r}^0 \in \textrm{bd}~Q$. Then there are two possibilities: either there is $i$ such that $r^0_i < -C_1 < 0$ or for every $i$ we have $r^0_i \geq -C_1$.

In the first case by Lemma \ref{inf1}, $\tilde\omega_i < \mu \leq \mu_i$. Therefore, $$\mu_i-\tilde\omega_i=\tilde\kappa_i - \kappa'_i = \left.\frac{\partial S_{\kappa'}}{\partial r_i} \right| _{{\bf r} = {\bf r}^0}> 0.$$ Let $\overline v_i$ be the $i$-th coordinate vector. We can see that for small enough $\nu>0$, $S_{\kappa'}({\bf r}^0 + \nu \overline v_i) > S_{\kappa'}({\bf r}^0)$ and ${\bf r}^0 + \nu \overline v_i \in Q$, which is a contradiction.

In the second case consider $i$ such that $|r^0_i| = q$. Then $r^0_i = q > C_2$ (because if $r^0_i=-q$, then the first case holds). Therefore, by Corollary \ref{inf2} we have $$\sum\limits_{1 \leq i \leq n} \tilde\omega_i \geq 2\pi(n+2g-2) -\varepsilon_0>\sum_{1\leq i\leq n} \mu_i.$$

Consider two sets $I = \{ i: 1\leq i \leq n, r^0_i = q \}$ and $J = [n] \backslash I$. Clearly, if $j \in J$, then $\left.\frac{\partial S_{\kappa'}}{\partial r_j} \right|_{{\bf r} = {\bf r}^0} = 0$. Therefore, $\tilde\omega_j = \mu_j$. Then we have $$\sum\limits_{i \in I} \tilde\omega_i >\sum_{i \in I}\mu_i.$$

Hence, for some $i \in I$ we obtain $\tilde\omega_i > \mu_i$ and so $\left.\frac{\partial S_{\kappa'}}{\partial r_i} \right|_{{\bf r} = {\bf r}^0}< 0$. Therefore, for small enough $\nu>0$, $S_{\kappa'}({\bf r}^0 - \nu \overline v_i) > S_{\kappa'}({\bf r}^0)$ and ${\bf r}^0 - \nu \overline v_i \in Q$, which is a contradiction.
\end{proof}

This finishes the proof of Theorem~\ref{angles}.

\bibliographystyle{abbrv}
\bibliography{fuchsian}
\end{document}